\newtheorem{theorem}{Theorem}[section]
\newtheorem{coro}[theorem]{Corollary}
\newtheorem{lemma}[theorem]{Lemma}
\newtheorem{fact}[theorem]{Fact}
\theoremstyle{definition}
\newtheorem{definition}[theorem]{Definition}
\newtheorem{remark}[theorem]{Remark}
\newenvironment{spezbew}[1]{\begin{proof}[Proof of #1]}{\end{proof}}
\newcommand\cf{\mathbbm{1}}
\newcommand\C{\mathbb{C}}
\renewcommand\H{\mathbb{H}}
\newcommand\N{\mathbb{N}}
\newcommand\R{\mathbb{R}}
\newcommand\Z{\mathbf{Z}}
\renewcommand\a{\mathfrak{a}}
\newcommand\eps{\varepsilon}
\newcommand\Om{\Omega}
\newcommand\om{\omega}
\newcommand\wt{\widetilde}
\newcommand\la{\lambda}
\newcommand\al{\alpha}
\newcommand\ga{\gamma}
\newcommand\si{\sigma}
\newcommand\Si{\Sigma}
\newcommand\del{\delta}
\newcommand\ph{\varphi}
\newcommand\leqC{\lesssim}
\newcommand\dom{\mathcal{D}}
\newcommand\ov{\overline}
\newcommand{\dist}{\operatorname{dist}}
\newcommand{\supp}{\operatorname{supp}}
\renewcommand{\Re}{\operatorname{Re}}
\newenvironment{Quote}%
{\begin{list}{}{%
\setlength{\leftmargin}{1em}%
\setlength{\rightmargin}{1em}}%
\item[]\ignorespaces}%
{\unskip\end{list}}
\begin{document}

\title{Spectral multiplier theorems of Hörmander type\linebreak
on Hardy and Lebesgue spaces}

\author{Peer Christian Kunstmann}
\address{Department of Mathematics, Karlsruhe Institute of Technology (KIT),
Kaiserstr.\ 89, 76128 Karlsruhe, Germany}
\email{\href{mailto:peer.kunstmann@kit.edu}{peer.kunstmann@kit.edu},
\href{mailto:matthias.uhl@kit.edu}{matthias.uhl@kit.edu}}
\author{Matthias Uhl}


\subjclass[2010]{42B15, 42B20, 42B30, 47A60.}

\keywords{Spectral multiplier theorems, Hardy spaces, non-negative self-adjoint
operators, Davies-Gaffney estimates, spaces of homogeneous type.}

\begin{abstract}
Let $X$ be a space of homogeneous type and let $L$ be an injective,
non-negative, self-adjoint operator on $L^2(X)$ such that the
semigroup generated by $-L$ fulfills Davies-Gaffney estimates of
arbitrary order. We prove that the operator $F(L)$, initially
defined on $H^1_L(X)\cap L^2(X)$, acts as a bounded linear operator
on the Hardy space $H^1_L(X)$ associated with $L$ whenever $F$ is a
bounded, sufficiently smooth function. Based on this result,
together with interpolation, we establish Hörmander type spectral
multiplier theorems on Lebesgue spaces for non-negative,
self-adjoint operators satisfying generalized Gaussian estimates in
which the required differentiability order is relaxed compared to
all known spectral multiplier results.
\end{abstract}

\maketitle
\tableofcontents

\numberwithin{equation}{section}

\section{Introduction}

Let $L$ be a non-negative, self-adjoint operator on the Hilbert
space $L^2(X)$, where $X$ is a $\sigma$-finite measure space. If $E_L$
denotes the resolution of the identity associated with $L$, the
spectral theorem asserts that the operator
\begin{align}\label{specthmL2}
F(L):=\int_0^\infty F(\la)\,dE_L(\la)
\end{align}
is well defined and acts as a bounded linear operator on $L^2(X)$
whenever $F\colon[0,\infty)\to\C$ is a bounded Borel function.
Spectral multiplier theorems provide regularity assumptions on $F$
which ensure that the operator $F(L)$ extends from $L^p(X)\cap
L^2(X)$ to a bounded linear operator on $L^p(X)$ for all $p$ ranging
in some symmetric interval containing $2$.

In 1960, L.\ H{\"o}rmander addressed this question for the Laplacian
$L=-\Delta$ on $X=\R^D$ during his studies on the boundedness of
Fourier multipliers on $\R^D$. His famous Fourier multiplier theorem
(\cite[Theorem~2.5]{Hor60}) states that the operator $F(-\Delta)$ is
of weak type $(1,1)$ whenever $F\colon[0,\infty)\to\C$ is a bounded
Borel function such that
\begin{align}\label{HC}
\sup_{n\in\Z}\|\om F(2^n\cdot)\|_{H_2^s}<\infty
\end{align}
for some $s>D/2$. Here and in the following
$\om\in C_c^\infty(0,\infty)$ is a non-negative function such that
\begin{align*}
\supp\om\subset(1/4,1) \qquad\mbox{and}\qquad
\sum_{n\in\Z}\om(2^{-n}\la)=1\quad\mbox{for all $\la>0$}\,.
\end{align*}
As a consequence, $F(-\Delta)$ is bounded on $L^p(\R^D)$ for every
$p\in(1,\infty)$. Note that the so-called {\em H{\"o}rmander
condition} (\ref{HC}) does not depend on the special choice of
$\om$. By considering imaginary powers $(-\Delta)^{i\tau}$ for
$\tau\in\R$, M.\ Christ (\cite[p.~73]{Christ}) observed that the
regularity order in H{\"o}rmander's statement cannot be improved
beyond $D/2$. This means that for any $s<D/2$ there exists a bounded
Borel function $F\colon[0,\infty)\to\C$ such that the H{\"o}rmander
condition (\ref{HC}) holds, but $F(-\Delta)$ does not act as a
bounded operator on $L^p(\R^D)$ for the whole range
$p\in(1,\infty)$.

H{\"o}rmander's multiplier theorem was generalized, on the one hand,
to other spaces than $\R^D$ and, on the other hand, to more general
operators than the Laplacian. The development began in the early
1990's. G.\ Mauceri and S.\ Meda (\cite{MaMe}) and M.\ Christ
(\cite{Christ}) extended the result to homogeneous Laplacians on
stratified nilpotent Lie groups. Further generalizations were
obtained by G.\ Alexopoulos (\cite{Alexo}) who showed in the setting
of connected Lie groups of polynomial volume growth a corresponding
statement for the left invariant sub-Laplacian which was in turn
extended by W.\ Hebisch (\cite{Hebisch}) to integral operators with
kernels decaying polynomially away from the diagonal. More
historical remarks about spectral multiplier theorems can be found
e.g.\ in \cite{DOS} and the references therein.

The results in \cite{DOS} due to X.T.\ Duong, E.M.\ Ouhabaz, and A.\
Sikora marked an important step toward the study of more general
operators. In the abstract framework of (subsets of) spaces of
homogeneous type $(X,d,\mu)$ with dimension $D>0$ they investigated
non-negative, self-adjoint operators $L$ on $L^2(X)$ which satisfy
{\em pointwise Gaussian estimates}, i.e.\ the semigroup
$(e^{-tL})_{t>0}$ generated by $-L$ can be represented as integral
operators
$$
e^{-tL}f(x)=\int_Xp_t(x,y)f(y)\,d\mu(y)
\qquad(f\in L^2(X), t>0, \mu\mbox{-a.e.\ }x\in X)
$$
and the kernels $p_t\colon X\times X\to\C$ enjoy the following
pointwise upper bound
\begin{align}\label{GE}
|p_t(x,y)|\leq
C\,\mu(B(x,t^{1/m}))^{-1}\exp\Biggl(-b\biggl(\frac{d(x,y)}{t^{1/m}}
\biggr)^{\frac m{m-1}}\Biggr)
\end{align}
for all $t>0$ and all $x,y\in X$, where $b,C>0$ and $m\geq2$ are
constants independent of $t,x,y$ and $B(x,r):=\{y\in X:\,d(x,y)<r\}$
denotes the open ball in $X$ with center $x\in X$ and radius $r>0$. 
Under these hypotheses X.T.\ Duong, E.M.\ Ouhabaz, and A.\ Sikora 
proved that the operator $F(L)$ is of weak type $(1,1)$ whenever
$F\colon[0,\infty)\to\C$ is a bounded Borel function such that
$\sup_{n\in\Z}\|\om F(2^n\cdot)\|_{H_2^s}<\infty$ for some
$s>(D+1)/2$. Consequently, $F(L)$ is then bounded on $L^p(X)$ for 
all $p\in(1,\infty)$.

However, the price for the generality lies in the requirement of an
additional $1/2$ in the regularity order of the H{\"o}rmander
condition. Unfortunately, sharp results as for the Laplacian are
unknown at present time. In the general situation it is only known
that the regularity assumption $s>D/2+1/6$ cannot be weakened as an
example in \cite{Tha89} by S.\ Thangavelu shows.

In order to get better multiplier results in the general situation
as well, X.T.\ Duong, E.M.\ Ouhabaz, and A.\ Sikora introduced the
so-called {\em Plancherel condition} (\cite[(3.1)]{DOS}) which means
the following: There exist $C>0$ and $q\in[2,\infty]$ such that for
all $R>0$, $y\in X$, and all bounded Borel functions
$F\colon[0,\infty)\to\C$ with $\supp F\subset[0,R]$
\begin{align}\label{Plancherel-DOS-Original}
\int_X\bigl|K_{F(\sqrt[m]L)}(x,y)\bigr|^2\,d\mu(x)\leq
C\mu(B(y,1/R))^{-1}\|F(R\cdot)\|_{L^q}^2\,,
\end{align}
where $K_{F(\sqrt[m]L)}\colon X\times X\to\C$ denotes the kernel of
the integral operator $F(\sqrt[m]L)$. The result of X.T.\ Duong,
E.M.\ Ouhabaz, and A.\ Sikora reads as follows (\cite[Theorem
3.1]{DOS}):
\begin{Quote}
{\em Let $(X,d,\mu)$ be a space of homogeneous type with dimension
$D$ and $L$ be a non-negative, self-adjoint operator on $L^2(X)$
which satisfies pointwise Gaussian estimates. Suppose that the
Plan\-cherel condition holds for some $q\in[2,\infty]$ and that
$F\colon[0,\infty)\to\C$ is a bounded Borel function with
$\sup_{n\in\Z}\|\om F(2^n\cdot)\|_{H_q^s}<\infty$ for some $s>D/2$.
Then the operator $F(L)$ is of weak type $(1,1)$ and thus bounded 
on $L^p(X)$ for all $p\in(1,\infty)$.}
\end{Quote}
Here, we have set $H^s_\infty:=C^s$, the space of H\"older continuous
functions.

Sometimes it is not clear whether, or even not true that, a
non-negative, self-adjoint operator on $L^2(X)$ admits \emph{pointwise}
Gaussian estimates and therefore the above results cannot be applied.
This occurs, for example, for Schr{\"o}dinger operators with bad
potentials (\cite{Schreieck}) or elliptic operators of higher order
with bounded measurable coefficients (\cite{D97}). Nevertheless, it
is often possible to show a weakened version of (\ref{GE}),
so-called \emph{generalized} Gaussian estimates.
\begin{definition}
Let $1\leq p\leq2\leq q\leq\infty$ and $m\geq2$. A non-negative,
self-adjoint operator $L$ on $L^2(X)$ is said to satisfy {\em
generalized Gaussian $(p,q)$-estimates of order $m$} if there are
constants $b,C>0$ such that
\begin{align}\label{GGE}
\bigl\|\cf_{B(x,t^{1/m})}e^{-tL}\cf_{B(y,t^{1/m})}\bigr\|_{L^p\to L^q}
\leq
C\,\mu(B(x,t^{1/m}))^{-(\frac1{p}-\frac1{q})}\exp\Biggl(
-b\biggl(\frac{d(x,y)}{t^{1/m}}\biggr)^{\frac m{m-1}}\Biggr)
\end{align}
for all $t>0$ and all $x,y\in X$. In this case, we will use the
shorthand notation GGE$_m(p,q)$. If $L$ satisfies GGE$_m(2,2)$, then
we also say that $L$ enjoys {\em Davies-Gaffney estimates} of order
$m$ and just write DG$_m$. Here, $\cf_{E_1}$ denotes the
characteristic function of the set $E_1$ and
$\|\cf_{E_1}e^{-tL}\cf_{E_2}\|_{L^p\to L^q}$ is defined via
$\sup_{\|f\|_{L^p}\leq1}\|\cf_{E_1}\cdot
e^{-tL}(\cf_{E_2}f)\|_{L^q}$ for Borel sets $E_1,E_2\subset X$.
\end{definition}
In the case $(p,q)=(1,\infty)$, this definition covers pointwise
Gaussian estimates (cf.\ \cite[Proposition~2.9]{BK1}).

In 2003, S.\ Blunck (\cite[Theorem~1.1]{B}) showed a spectral
multiplier theorem for non-negative, self-adjoint operators $L$ on
$L^2(X)$ satisfying GGE$_m(p_0,p_0')$ for some $p_0\in[1,2)$, where
$1/p_0+1/p_0'=1$. It guarantees that the operator $F(L)$ is of weak
type $(p_0,p_0)$ if $F\colon[0,\infty)\to\C$ is a bounded Borel
function such that $\sup_{n\in\Z}\|\om F(2^n\cdot)\|_{H_2^s}<\infty$
holds for some $s>(D+1)/2$. In particular, $F(L)$ is then bounded on
$L^p(X)$ for all $p\in(p_0,p_0')$.

Here, the required regularity order in the H{\"o}rmander condition
for getting a weak type $(p_0,p_0)$-bound is the same as needed for
the weak type $(1,1)$-bound in the corresponding statement for
operators enjoying pointwise Gaussian estimates. The proof of S.\
Blunck relies on the weak type $(p_0,p_0)$ criterion due to S.\
Blunck and the first named author (\cite[Theorem~1.1]{BK2}) and it
seems to be impossible to weaken the regularity assumptions with
this approach directly. However, since for boundedness of $F(L)$ on $L^2(X)$
no regularity of $F$ is needed, one expects, motivated by
interpolation, $s>(D+1)(1/p_0-1/2)$ instead of $s>(D+1)/2$ as a
sufficient regularity assumption in the H{\"o}rmander condition when
one is interested in boundedness of $F(L)$ in $L^{p}(X)$ for all
$p\in(p_0,p_0')$.

In order to establish such a multiplier result, we make use of Hardy
spaces which serve as a substitute of Lebesgue spaces. For our
purposes we shall consider specific Hardy spaces being associated
with the operator $L$, similarly to the way that the classical Hardy
spaces are adapted to the Laplacian. They were originally introduced
by P.\ Auscher, X.T.\ Duong and A.\ McIntosh in \cite{ADM} and
revised during the past ten years. We refer to the beginning of
Section \ref{secHardy} for a short survey on recent developments.
\begin{definition}
Let $L$ be an injective, non-negative, self-adjoint operator on
$L^2(X)$ which satisfies Davies-Gaffney estimates of order $m\geq2$.
Consider the {\em conical square function}
$$
Sf(x):=\biggl(\int_0^\infty\!\!\int_{B(x,t)}
|t^mL\,e^{-t^mL}f(y)|^2\,\frac{d\mu(y)}{|B(x,t)|}\,\frac{dt}{t}
\biggr)^{1/2}\qquad(f\in L^2(X),x\in X).
$$
For $p\in[1,2]$, the {\em Hardy space $H^p_{L}(X)$ associated with
the operator $L$} is said to be the completion of the set $\{f\in
L^2(X):\, Sf\in L^p(X)\}$ with respect to the norm
$$
\|f\|_{H^p_{L,S}}:=\|Sf\|_{L^p}\,.
$$
\end{definition}
By the spectral theorem, it is plain to see that $H^2_{L}(X)=L^2(X)$
with equivalent norms. Hardy spaces associated with $L$ are known to possess nice
properties, for example, they form a complex interpolation scale
(cf.\ Fact \ref{HardyInterpol}), coincide under the assumption of
GGE$_m(p_0,2)$ with $L^p(X)$ for all $p\in(p_0,2]$ (cf.\
Theorem~\ref{HpLp}) and allow spectral multiplier theorems even for 
all $p\in[1,p_0]$ (cf.\ Sections \ref{SpecMultHardy}, 
\ref{Chapter6Interpol}).

There is an equivalent characterization of the space $H^1_{L}(X)$ in
terms of a molecular decomposition (cf.\ Theorem~\ref{sqfct=mol}). 
In order to verify boundedness of an operator on
the Hardy space $H^1_{L}(X)$, one has just to understand the action
of the operator on an individual molecule. Such an idea is classical
in the more comfortable situation of an atomic decomposition and was
used by various authors for obtaining boundedness of spectral
multipliers on the Hardy space $H^1_{L}(X)$. For example, J.\
Dziuba\'nski (\cite{Dzi1}) showed a spectral multiplier theorem for
Schr{\"o}dinger operators and, later, J.\ Dziuba\'nski and M.\
Preisner (\cite{Dzi2}) established a generalization to arbitrary
operators satisfying pointwise Gaussian estimates of order $2$.
Recently, X.T.\ Duong and L.X.\ Yan (\cite{DY}) obtained boundedness
of spectral multipliers on the Hardy space $H_{L}^1(X)$ for
operators $L$ satisfying Davies-Gaffney estimates of order $2$.

All these authors confined their studies to operators satisfying
Davies-Gaffney estimates of order $2$ and used essentially that, in
this case, the validity of Davies-Gaffney estimates is equivalent to
the finite speed propagation property for the corresponding wave
equation (cf.\ e.g.\ \cite[Theorem~3.4]{Phragmen}). Hence one obtains
information on the support of the integral kernel of $\cos(t\sqrt
L)$ and this in turn entails information on the support of the
integral kernel of $F(\sqrt L)$. However, for general $m>2$, such a
relation to finite speed propagation properties fails. We develop the 
following spectral multiplier theorem on the Hardy space $H_{L}^1(X)$ 
for operators $L$ satisfying Davies-Gaffney estimates of {\em arbitrary} 
order $m\geq2$ (cf.\ Theorem~\ref{DY_Thm1.1}~a)). 

\begin{theorem}\label{mainresH1}
Let $(X,d,\mu)$ be a space of homogeneous type with dimension
$D$ and $L$ an injective, non-negative, self-adjoint operator on
$L^2(X)$ satisfying Davies-Gaffney estimates of order $m\geq2$.
If a bounded Borel function $F\colon[0,\infty)\to\C$ satisfies
$\sup_{n\in\Z}\|\om F(2^n\cdot)\|_{H_2^s}<\infty$ for some 
$s>(D+1)/2$, then $F(L)$ can be extended from $H^1_L(X)\cap L^2(X)$
to a bounded linear operator on $H^1_L(X)$.
\end{theorem}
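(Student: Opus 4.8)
The plan is to combine the molecular characterisation of $H^1_L(X)$ from Theorem~\ref{sqfct=mol} with off-diagonal $L^2$ estimates for $F(L)$ that are forced by the Davies--Gaffney property DG$_m$. Fix once and for all $M\in\N$ with $mM>s$ (so in particular $M>D/(2m)$, since $s>(D+1)/2>D/2$) and some $\eps>0$; then $H^1_L(X)$ is, with equivalent norm, the space of $(1,M,\eps)$-molecular sums. Since $F$ is bounded, $F(L)$ is bounded on $L^2(X)$ by the spectral theorem, and by a standard density argument (using that $H^1_L(X)\cap L^2(X)$ is dense in $H^1_L(X)$ and that $F(L)$ is $L^2$-continuous) the claim reduces to producing a constant $C$, depending on $F$ only through $A:=\sup_{n\in\Z}\|\om F(2^n\cdot)\|_{H_2^s}$, such that $\|F(L)\mathfrak m\|_{H^1_L}\le C$ for every $(1,M,\eps)$-molecule $\mathfrak m=L^Mb$ adapted to a ball $B=B(x_B,r_B)$.

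The first step is to peel off the scale $r_B$. Put $\phi_B(\la):=(1-e^{-r_B^m\la})^M$ and split $F=F\phi_B+F(1-\phi_B)$. Since $F\phi_B$ vanishes to order $M$ at the origin, $[F\phi_B](L)\mathfrak m=L^M\bigl(r_B^{mM}(F\Theta_B)(L)\mathfrak m\bigr)$ with $\Theta_B(\la)=\bigl((1-e^{-r_B^m\la})/(r_B^m\la)\bigr)^M$; and since $1-\phi_B=\sum_{\ell=1}^M\binom M\ell(-1)^{\ell+1}e^{-\ell r_B^m\la}$, the function $[F(1-\phi_B)](L)\mathfrak m$ is a finite sum of terms $F(L)e^{-\ell r_B^mL}\mathfrak m=L^M\bigl((F(\cdot)e^{-\ell r_B^m\cdot})(L)b\bigr)$. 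In either case $F(L)\mathfrak m$ is again presented in the form $L^M(\text{body})$, and it remains to bound $\sum_{i\ge0}\mu(2^iB)^{1/2}\,\bigl\|\cf_{U_i(B)}\,(r_B^mL)^k(\text{body})\bigr\|_{L^2}$ for $0\le k\le M$, where $U_i(B)$ is the $i$-th dyadic annulus around $B$. Each such body is of the form $G(L)g$ with $g\in\{b,(r_B^mL)^kb,\mathfrak m\}$ enjoying the molecular decay $\|g\|_{L^2(U_j(B))}\leqC r_B^{mM}2^{-j\eps}\mu(2^jB)^{-1/2}$, and $G$ a bounded multiplier built from $F$ together with a smooth cut-off at scale $r_B^{-m}$ (namely $\Theta_B$ or $e^{-\ell r_B^m\cdot}$, possibly times a power $(r_B^m\cdot)^k$), whose Littlewood--Paley pieces $\om(2^{-\ell}\cdot)G$ have $H_2^s$-norms controlled by $A$ and, crucially, summable once $\ell$ ranges over the high frequencies, because of the decay of $\Theta_B$ (resp.\ $e^{-\ell r_B^m\cdot}$) there; the low-frequency tail is tamed instead by the factor $L^M$ present in $\mathfrak m=L^Mb$.

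The heart of the matter is then the following off-diagonal estimate, which has to be established from DG$_m$: there is $C>0$ so that for every Borel function $G$ with $\supp G\subset[R/4,R]$, all $R>0$, and all Borel sets $E_1,E_2\subset X$,
\[
\bigl\|\cf_{E_1}G(L)\cf_{E_2}\bigr\|_{L^2\to L^2}\;\leqC\;\|G(R\cdot)\|_{H_2^s}\,\bigl(1+R^{1/m}\dist(E_1,E_2)\bigr)^{-(s-1/2)}.
\]
Granting it, the reduction above goes through by the usual two-regime argument: on the annuli close to $B$ one simply uses $L^2$-boundedness of $G(L)$ together with the decay of $g$; on the far annuli one decomposes $G$ into Littlewood--Paley pieces and sums the displayed bound over the Littlewood--Paley index and over the annuli of $g$. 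Each such summation gains a factor $\leqC 2^{-i(s-1/2)}$ from the off-diagonal decay between $U_i(B)$ and $B$, while converting an $L^2$-bound on $U_i(B)$ into the molecular (i.e.\ $H^1_L$) count costs at most $\mu(2^iB)^{1/2}/\mu(B)^{1/2}\leqC 2^{iD/2}$ by doubling; the resulting series in $i$ converges exactly when $s-1/2>D/2$, i.e.\ $s>(D+1)/2$, and $mM>s$ is what makes the high-frequency Littlewood--Paley tail summable.

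The main obstacle is the displayed off-diagonal estimate. For $m=2$ it follows from finite speed of propagation for $\cos(t\sqrt L)$, which confines the relevant kernels to a compact set; for general $m\ge2$ this tool is unavailable, which is the crux of the present generalisation. Instead I would exploit that, $L$ being non-negative and self-adjoint, $(e^{-zL})_{\Re z>0}$ is an analytic contraction semigroup and DG$_m$ extends to complex times, with constants deteriorating as $\Re z/|z|\to0$; writing $G$ through its Fourier transform and representing $G(L)$ as a contour integral of such complex-time semigroup operators, one then integrates the complex-time Davies--Gaffney bounds. The loss of $1/2$ in the decay exponent is precisely the cost of the Cauchy--Schwarz step that trades the $H_2^s$-control of $G$ for pointwise-in-frequency control in this integration; optimising that step is what pins the threshold at $(D+1)/2$. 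The remaining ingredients — the density/limiting step, the verification of the Littlewood--Paley bounds for the multipliers $G$ above, and the convergence of all the series — are then routine, if lengthy.
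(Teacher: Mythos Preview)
Your approach is essentially the one the paper takes: show that $F(L)$ maps molecules to (multiples of) molecules, insert a regularising factor $(I-e^{-r_B^mL})^M$ to peel off the scale of the molecule, decompose the multiplier into Littlewood--Paley pieces, and --- this being the crux, replacing finite speed of propagation for $m=2$ --- obtain the needed off-diagonal $L^2$ decay from complex-time Davies--Gaffney bounds via a Fourier representation of the functional calculus, the Cauchy--Schwarz step costing exactly the extra $\tfrac12$ derivative. The paper abstracts the molecular reduction into a separate criterion (Theorem~\ref{DY_Thm3.1}) and records the off-diagonal bound as a weighted estimate (Lemma~\ref{DOS_Lem4_3aDG}\,a)), while you merge everything into a single pass; the substance is the same.

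One point needs correction: your displayed off-diagonal inequality is asserted for \emph{arbitrary} Borel sets $E_1,E_2$, but the complex-time argument only delivers it when $E_2$ sits in a ball of the natural radius $R^{-1/m}$. Passing to larger $E_2$ (such as the annuli $U_j(B)$ over which your $g$ is spread) requires a covering by balls of that radius together with a duality/almost-orthogonality argument, and this introduces on the right-hand side an extra factor $\max\{1,(R^{1/m}\operatorname{diam}E_2)^{D/2}\}$ --- precisely the $\max\{1,(2^lr)^{D/2}\}$ in the paper's~\eqref{rwfklwbfkawf}. Since $s-\tfrac12>\tfrac D2$ this factor is ultimately absorbed by the off-diagonal decay and does not ruin the summation, but it has to be carried through, and absorbing it in the molecular bookkeeping is exactly why the paper starts from $(2M,\wt\eps,L)$-molecules with $\wt\eps=D+\wt\delta>D$ rather than from $(M,\eps,L)$-molecules with $\eps$ merely positive. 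So ``some $\eps>0$'' will not suffice; the molecular parameters have to be fixed with this in mind.
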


Based on ideas in \cite{DY} by X.T.\ Duong and L.X.\ Yan,
we give a sufficient criterion for the boundedness of spectral 
multipliers on $H^1_L(X)$ (cf.\ Theorem~\ref{DY_Thm3.1}), which 
will be achieved by reducing the proof of the boundedness of 
$F(L)$ in $H^1_L(X)$ to the uniform boundedness of $F(L)a$ in 
$H_{L}^1(X)$ for every molecule $a$. In order to derive the above 
Hörmander type multiplier theorem on $H^1_L(X)$, we use suitable 
weighted norm estimates that generalize the tools prepared in 
\cite{DOS} and compensate for the lack of information on the support 
caused by the missing finite speed propagation property.

We also present an improved spectral multiplier result with an
adequate $L^2$-version of the Plancherel condition
(\ref{Plancherel-DOS-Original}) which also works for operators $L$
satisfying Davies-Gaffney estimates. In order to motivate our
replacement, we rewrite (\ref{Plancherel-DOS-Original}) as a norm
estimate for the operator $F(\sqrt[m]L)$ itself
\begin{align*}
\bigl\|F(\sqrt[m]L)\,\cf_{B(y,1/R)}\bigr\|_{L^1\to L^2}\leq
C\,|B(y,1/R)|^{-\frac12}\|F(R\cdot)\|_{L^q}
\end{align*}
for all $R>0$, $y\in X$, and all bounded Borel functions
$F\colon[0,\infty)\to\C$ with $\supp F\subseteq[0,R]$, where the
constants $C>0$ and $q\in[2,\infty]$ are independent of $R,y,F$.
Inspired by this observation, we introduce our substitute of the
Plancherel condition for operators $L$ which fulfill Davies-Gaffney
estimates of order $m\geq2$ as follows:
\begin{align}\label{DOS_PlancherelEinleitung}
\bigl\|F(\sqrt[m]L)\,\cf_{B(y,1/R)}\bigr\|_{L^2\to L^2}\leq
C\,\|F(R\cdot)\|_{L^q}
\end{align}
for all $R>0$, $y\in X$, and all bounded Borel functions
$F\colon[0,\infty)\to\C$ with $\supp F\subseteq[0,R]$, where the
constants $C>0$ and $q\in[2,\infty]$ are independent of $R,y,F$.
Having this replace\-ment of (\ref{Plancherel-DOS-Original}) at
hand, we are able to show the following result (cf.\
Theorem~\ref{Hardy_Plancherel}).

\begin{theorem}\label{Hardy_Plancherel_Introd}
Let $(X,d,\mu)$ be a space of homogeneous type with dimension
$D$ and $L$ an injective, non-negative, self-adjoint operator on
$L^2(X)$ for which Davies-Gaffney estimates of order $m\geq2$ hold.
Suppose that $L$ fulfills the Plancherel condition
\eqref{DOS_PlancherelEinleitung}. If $F\colon[0,\infty)\to\C$ is a 
bounded Borel function with 
$\sup_{n\in\Z}\|\om F(2^n\cdot)\|_{H_q^s}<\infty$ for some
$s>\max\{D/2,1/q\}$, then there exists a constant $C>0$ such 
that for all $f\in H^1_L(X)$
\begin{align*}
\|F(L)f\|_{H^1_L}
\leq
C\Bigl(\sup_{n\in\Z}\|\om F(2^n\cdot)\|_{H_q^s}+|F(0)|\Bigr)\|f\|_{H^1_L}\,.
\end{align*}
\end{theorem}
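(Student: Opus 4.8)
The plan is to follow the blueprint behind Theorem~\ref{mainresH1}, the decisive difference being that the weighted norm estimates obtained there from the Davies-Gaffney estimates alone will now be sharpened by exploiting the Plancherel condition \eqref{DOS_PlancherelEinleitung}. First I would dispose of the constant term: writing $F=(F-F(0))+F(0)$, the operator $F(0)\operatorname{Id}$ is bounded on $H^1_L(X)$ with norm $|F(0)|$, and $\sup_{n\in\Z}\|\om(F-F(0))(2^n\cdot)\|_{H^s_q}\leqC A_F+|F(0)|$ with $A_F:=\sup_{n\in\Z}\|\om F(2^n\cdot)\|_{H^s_q}$, so it is enough to treat the case $F(0)=0$ and prove $\|F(L)f\|_{H^1_L}\leqC A_F\|f\|_{H^1_L}$. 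By the molecular characterization of $H^1_L(X)$ (Theorem~\ref{sqfct=mol}) and the reduction provided by Theorem~\ref{DY_Thm3.1}, this reduces to establishing, for every molecule $a$ adapted to a ball $B=B(x_B,r_B)$, the $L^2$ off-diagonal estimates on $F(L)a$ over the dyadic annuli $U_j(B)$ around $B$ (together with the finitely many related bounds) required by that criterion, with constants controlled by $A_F$ and summable decay in $j$.

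Next I would decompose $F=\sum_{n\in\Z}F_n$ with $F_n:=\om(2^{-n}\cdot)F$, supported in $[2^{n-2},2^n]$, and set $R_n:=2^{n/m}$. Passing to the $m$-th root via $F(L)=G(\sqrt[m]L)$, $G(\la):=F(\la^m)$, each piece becomes $F_n(L)=G_n(\sqrt[m]L)$ with $G_n:=F_n(\cdot^m)$ supported in $[2^{(n-2)/m},R_n]\subseteq[0,R_n]$; since $\la\mapsto\la^m$ is bi-Lipschitz with bounded derivatives on the relevant annuli, $\|G_n(R_n\,\cdot)\|_{H^s_q}\leqC A_F$. The technical heart of the argument is a \emph{weighted Plancherel estimate}: for every bounded Borel $H$ with $\supp H\subseteq[0,R]$ and every $\ga\ge0$,
\begin{align*}
\bigl\|(1+R\,d(\cdot,y))^{\ga}\,H(\sqrt[m]L)\,\cf_{B(y,1/R)}\bigr\|_{L^2\to L^2}\leqC\|H(R\,\cdot)\|_{H^{\ga'}_q}\qquad(\ga'>\ga,\ \ga'>1/q),
\end{align*}
uniformly in $R>0$ and $y\in X$. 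I would prove this by combining the unweighted bound \eqref{DOS_PlancherelEinleitung} with Davies-Gaffney off-diagonal $L^2$-estimates for $\ph(tL)$ ($\ph$ of fast decay), via a dyadic decomposition of $H$ in the spectral variable and an interpolation in the regularity parameter trading smoothness of $H$ for the power $\ga$ of the weight. This step --- together with the Davies-Gaffney machinery it rests on, which must substitute for the finite speed propagation property available only when $m=2$, in the spirit of the weighted estimates of \cite[Section~4]{DOS} --- is the part I expect to be the main obstacle.

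Granting the weighted Plancherel estimate, the remainder is bookkeeping. On each annulus $U_j(B)$ I would use $\|g\|_{L^1(U_j(B))}\leqC|2^jB|^{1/2}\|g\|_{L^2(U_j(B))}$ and apply the weighted estimate with $R=R_n$, $y=x_B$, and some $\ga$ satisfying $D/2<\ga<s$ (possible precisely because $s>D/2$); the moment and decay properties of the molecule $a$ then supply, after distinguishing the regimes $R_nr_B\le1$ and $R_nr_B\ge1$, a favourable power of $\min\{1,R_nr_B\}$. The doubling condition gives $|2^jB|\leqC2^{jD}|B|$, so the weight decay of order $\ga>D/2$ beats the growth $2^{jD/2}$ of the annuli, while $s>1/q$ is exactly what makes the weighted Plancherel estimate run and what lets one bound the auxiliary $L^\infty$-norms of the pieces $G_n$. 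Carrying out the summation over $j$ and over $n$, balancing the two regimes, yields $\|F(L)a\|_{H^1_L}\leqC A_F$ for every molecule $a$; an appeal to Theorem~\ref{DY_Thm3.1} then completes the proof.
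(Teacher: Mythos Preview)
Your approach is essentially the paper's: reduce to $F(0)=0$, decompose dyadically, prove a weighted Plancherel estimate by interpolating between \eqref{DOS_PlancherelEinleitung} and a Davies--Gaffney based weighted bound (this is Lemma~\ref{DOS_Lem4_3a}~b), proved exactly via complex-time semigroup estimates and interpolation as you sketch), and then verify the criterion of Theorem~\ref{DY_Thm3.1}.

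One point needs correcting in your bookkeeping, however. The criterion \eqref{DY(3.1)} of Theorem~\ref{DY_Thm3.1} is an $L^2\to L^2$ operator norm estimate for $F(L)(I-e^{-r^mL})^M$ restricted between $B$ and $U_j(B)$; it is \emph{not} formulated in terms of molecules, and no $L^1$ estimate is required. The factor $\min\{1,R_nr_B\}$ you need for summability over $n$ does not come from ``moment and decay properties of the molecule'' but from the regularizing factor $(I-e^{-r^mL})^M$ built into the criterion (cf.\ \eqref{rewkjkbewrf}), via $|1-e^{-(r\la)^m}|^M\leqC\min\{1,(r\la)^{mM}\}$ on $\supp F_n$. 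Also, when $R_nr_B>1$ the ball $B$ must be covered by $\leqC(R_nr_B)^D$ balls of radius $1/R_n$ to apply your weighted estimate, which produces the $\max\{1,(R_nr_B)^{D/2}\}$ factor; this is why one needs $s>D/2$ and $M>2s/m$ for the sum over $n$ to converge. With these adjustments your outline matches the paper's proof of Theorem~\ref{Hardy_Plancherel}, which the paper describes simply as the proof of Theorem~\ref{DY_Thm1.1}~a) with Lemma~\ref{DOS_Lem4_3a}~b) replacing part~a).
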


In the same way as for the original Plancherel condition 
\eqref{Plancherel-DOS-Original} the validity of its variant
\eqref{DOS_PlancherelEinleitung} for some $q\in[2,\infty)$ 
entails that the point spectrum of the considered operator $L$ is 
empty. We also present a version of the Plancherel condition that 
applies for operators with non-empty point spectrum as well (cf.\
Theorem~\ref{DY_Thm1.1_Plancherel}). The approach is similar to 
the one of \cite[Theorem 3.2]{DOS}.

Since the Plancherel condition \eqref{DOS_PlancherelEinleitung}
always holds for $q=\infty$ (cf.\ Lemma~\ref{DOS_Lem2_2}),
Theorem~\ref{Hardy_Plancherel_Introd} yields the following 
multiplier result (cf.\ Theorem~\ref{DY_Thm1.1}~b)), in which 
the same order of differentiability is required as in 
\cite[Theorem~1.1]{DY} (which covers the case $m=2$).
\begin{theorem}
Let $(X,d,\mu)$ be a space of homogeneous type with dimension
$D$ and $L$ be as in Theorem \ref{mainresH1}. If 
$F\colon[0,\infty)\to\C$ is a bounded Borel function with
$\sup_{n\in\Z}\|\om F(2^n\cdot)\|_{C^s}<\infty$ for some $s>D/2$,
then $F(L)$ extends to a bounded linear operator on the Hardy space
$H^1_L(X)$.
\end{theorem}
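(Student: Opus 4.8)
The plan is to obtain this statement as an immediate consequence of Theorem~\ref{Hardy_Plancherel_Introd}, so the main point is to recognize that no further analytic work is needed. Recall the convention $H^s_\infty=C^s$ fixed above, so the hypothesis on $F$ says exactly that $\sup_{n\in\Z}\|\om F(2^n\cdot)\|_{H_q^s}<\infty$ for $q=\infty$. For this value of $q$ one has $\max\{D/2,1/q\}=\max\{D/2,0\}=D/2$, hence the assumption $s>D/2$ is precisely the regularity required in Theorem~\ref{Hardy_Plancherel_Introd} when $q=\infty$. It therefore only remains to verify that $L$ satisfies the Plancherel condition \eqref{DOS_PlancherelEinleitung} with $q=\infty$; this is the content of Lemma~\ref{DOS_Lem2_2}, whose proof I would reduce to the spectral theorem: if $F$ is a bounded Borel function with $\supp F\subseteq[0,R]$, then, using that multiplication by $\cf_{B(y,1/R)}$ has $L^2$-norm at most $1$,
\[
\bigl\|F(\sqrt[m]L)\,\cf_{B(y,1/R)}\bigr\|_{L^2\to L^2}\leq\bigl\|F(\sqrt[m]L)\bigr\|_{L^2\to L^2}\leq\|F\|_{L^\infty([0,\infty))}=\|F(R\cdot)\|_{L^\infty([0,\infty))}\,,
\]
which is the asserted bound with constant $C=1$, uniformly in $R>0$ and $y\in X$.

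With these two observations in hand, I would invoke Theorem~\ref{Hardy_Plancherel_Introd} to produce a constant $C>0$ with
\[
\|F(L)f\|_{H^1_L}\leq C\Bigl(\sup_{n\in\Z}\|\om F(2^n\cdot)\|_{C^s}+|F(0)|\Bigr)\|f\|_{H^1_L}
\]
for all $f\in H^1_L(X)$. Since $F$ is bounded and Borel, $F(L)$ is bounded on $L^2(X)=H^2_L(X)$ and hence well defined on the dense subspace $H^1_L(X)\cap L^2(X)$ of $H^1_L(X)$; the displayed estimate then permits the extension of $F(L)$ to a bounded linear operator on $H^1_L(X)$, with operator norm controlled by $\sup_{n\in\Z}\|\om F(2^n\cdot)\|_{C^s}+|F(0)|$ (the last term being finite because $F$ is bounded).

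I expect essentially no obstacle at this stage: all of the substance — the reduction, via the molecular characterization of $H^1_L(X)$, to the uniform boundedness of $F(L)a$ on $H^1_L(X)$ for a single molecule $a$, together with the weighted $L^2$-estimates for $F(\sqrt[m]L)$ that compensate for the missing finite speed propagation property — has already been carried out in the proof of Theorem~\ref{Hardy_Plancherel_Introd}. The present statement is merely the specialization $q=\infty$, which is the most robust case precisely because the Plancherel condition \eqref{DOS_PlancherelEinleitung} then holds automatically; the only mild point to keep track of is that passing from $H^s_\infty$ to the Hölder scale $C^s$ is a definitional identification rather than an extra hypothesis.
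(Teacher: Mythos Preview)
Your proposal is correct and follows exactly the paper's route: the theorem is deduced from Theorem~\ref{Hardy_Plancherel_Introd} by specializing to $q=\infty$ and invoking Lemma~\ref{DOS_Lem2_2} for the Plancherel condition. One minor remark: your verification of the $q=\infty$ Plancherel condition via the bare spectral theorem (yielding constant $C=1$ without using the Davies--Gaffney estimates at all) is actually simpler than the paper's own proof of Lemma~\ref{DOS_Lem2_2}, which factors $F(\sqrt[m]L)=G_1(L)G_2(L)$ and appeals to DG$_m$ for the $G_2$ piece; your shortcut is perfectly valid here since only an $L^2\to L^2$ bound is needed.
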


Having spectral multiplier theorems on $H_{L}^1(X)$ at hand, we
can prove spectral multiplier results on Lebesgue spaces for
operators satisfying generalized Gaussian estimates
GGE$_m(p_0,p_0')$ for some $p_0\in[1,2)$ and $m\geq2$. In a first
step we combine our multiplier results on the Hardy space
$H_{L}^1(X)$ with the interpolation procedure \cite[Corollary
4.84]{CK} that allows to interpolate the regularity order in the
H\"ormander condition as well. This yields multiplier results on
$H^p_{L}(X)$ for all $p\in[1,2]$ (cf.\ Theorem~\ref{mainresHardy}).
As the spaces $H^p_{L}(X)$ and $L^p(X)$ coincide for each
$p\in(p_0,2]$, we obtain spectral multiplier theorems on Lebesgue
spaces which read as follows (cf.\ Theorem~\ref{mainres}):

\begin{theorem}
Let $(X,d,\mu)$ be a space of homogeneous type with dimension
$D$ and $L$ be a non-negative, self-adjoint operator on $L^2(X)$ such
that generalized Gaussian estimates GGE$_m(p_0,p_0')$ hold for some
$p_0\in[1,2)$ and $m\geq2$.
\begin{enumerate}
\item[\bf a)]
For fixed $p\in(p_0,p_0')$ suppose that $s>(D+1)|1/p-1/2|$ and
$1/q<|1/p-1/2|$. Then, for every bounded Borel function
$F\colon[0,\infty)\to\C$ with $\sup_{n\in\Z}\|\om
F(2^n\cdot)\|_{H_q^s}<\infty$, the operator $F(L)$ is bounded on
$L^p(X)$.

\item[\bf b)]
Let $p\in(p_0,p_0')$ and $s>D|1/p-1/2|$. Then, for any bounded
Borel function $F\colon[0,\infty)\to\C$ with $\sup_{n\in\Z}\|\om
F(2^n\cdot)\|_{C^s}<\infty$, the operator $F(L)$ is bounded on
$L^p(X)$.

\item[\bf c)]
In addition, assume that $L$ fulfills the Plancherel condition
\eqref{DOS_PlancherelEinleitung} for some $q_0\in[2,\infty)$. Fix
$p\in(p_0,p_0')$. Let $s>\max\{D,2/q_0\}\,|1/p-1/2|$ and
$1/q<2/q_0\,|1/p-1/2|$. Then, for every bounded Borel function
$F\colon[0,\infty)\to\C$ with $\sup_{n\in\Z}\|\om
F(2^n\cdot)\|_{H_q^s}<\infty$, the operator $F(L)$ is bounded on
$L^p(X)$. 
\end{enumerate}
\end{theorem}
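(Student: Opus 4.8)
The plan is to deduce all three assertions from the multiplier results already available on the Hardy spaces $H^p_L(X)$, using the identification $H^p_L(X)=L^p(X)$ together with a duality argument; no genuinely new estimate is needed.

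First I would reduce to the range $p\in(p_0,2]$. Since $L$ is self-adjoint, each operator $e^{-tL}$ is self-adjoint, and for Borel sets $E_1,E_2$ the operator $\cf_{E_2}e^{-tL}\cf_{E_1}$ is the Hilbert space adjoint of $\cf_{E_1}e^{-tL}\cf_{E_2}$; because $(p_0)'=p_0'$ and the right-hand side of \eqref{GGE} is symmetric in $x$ and $y$, the estimates GGE$_m(p_0,p_0')$ are invariant under taking adjoints, and the Plancherel condition \eqref{DOS_PlancherelEinleitung}, being an $L^2\to L^2$ bound, is self-dual as well. Hence, if $F$ is as in a), b) or c) and $p\in[2,p_0')$, then $F(L)$ on $L^p(X)\cap L^2(X)$ is the adjoint of $\ov F(L)$ on $L^{p'}(X)\cap L^2(X)$ with $p'\in(p_0,2]$; since $f\mapsto\ov f$ is an isometry of $H^s_q$ and of $C^s$ and $|1/p'-1/2|=|1/p-1/2|$, it suffices to prove the three statements for $p\in(p_0,2]$.

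For such $p$ I would pass from $L^p(X)$ to the associated Hardy space. Interpolating the off-diagonal bound $\|\cf_{B(x,t^{1/m})}e^{-tL}\cf_{B(y,t^{1/m})}\|_{L^{p_0}\to L^{p_0'}}$ against $\|e^{-tL}\|_{L^2\to L^2}\le 1$ by the Riesz--Thorin theorem shows that GGE$_m(p_0,p_0')$ implies GGE$_m(p_0,2)$ (and, taking $p=q=2$, also DG$_m$, so that all Hardy space constructions are available). By Theorem~\ref{HpLp} we then have $H^p_L(X)=L^p(X)$ with equivalent norms for every $p\in(p_0,2]$, with common dense subspace $L^p(X)\cap L^2(X)$ on which $F(L)$ is initially defined; consequently $F(L)$ is bounded on $L^p(X)$ if and only if it is bounded on $H^p_L(X)$. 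It remains to invoke Theorem~\ref{mainresHardy}. Its $H^1_L$-endpoint is Theorem~\ref{mainresH1} for a), its H\"older-scale analogue ($s>D/2$ in $C^s$) for b), and Theorem~\ref{Hardy_Plancherel_Introd} for c) (the Plancherel condition \eqref{DOS_PlancherelEinleitung} being part of the hypothesis in c)); its $H^2_L=L^2$-endpoint requires no regularity of $F$; and the interpolation of the H\"ormander condition from \cite[Corollary~4.84]{CK}, combined with the complex interpolation scale for Hardy spaces (Fact~\ref{HardyInterpol}), turns the endpoint exponents $s>(D+1)/2$, $s>D/2$, $s>\max\{D/2,1/q_0\}$ at $p=1$ into $s>(D+1)|1/p-1/2|$, $s>D|1/p-1/2|$, $s>\max\{D,2/q_0\}|1/p-1/2|$ at $p\in(p_0,2]$, with the accompanying integrability thresholds $1/q<|1/p-1/2|$ in a) and $1/q<(2/q_0)|1/p-1/2|$ in c). Combining this with the previous paragraph finishes the proof.

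The main thing to verify carefully is the interface between the hypotheses: one has to check that GGE$_m(p_0,p_0')$ does imply GGE$_m(p_0,2)$, so that Theorem~\ref{HpLp} and hence the Hardy/Lebesgue identification apply, and that GGE$_m(p_0,p_0')$, the Plancherel condition, and the function-space norms $H^s_q$ and $C^s$ are all stable under passing to adjoints, so that the duality step legitimately widens the range from $(p_0,2]$ to the full interval $(p_0,p_0')$. Once these are in place, the bookkeeping of the regularity parameters $s$ and $q$ under interpolation is routine, being already contained in Theorem~\ref{mainresHardy}.
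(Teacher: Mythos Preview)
Your overall strategy---duality to reduce to $p\in(p_0,2]$, then identify $H^p_L(X)=L^p(X)$ via Theorem~\ref{HpLp} and apply Theorem~\ref{mainresHardy}---is the paper's own, and the bookkeeping of the parameters $s,q$ under interpolation is indeed already contained there. But you have a genuine gap: the theorem you are proving does \emph{not} assume that $L$ is injective, whereas the entire Hardy space machinery in Section~\ref{secHardy} (Definition~\ref{DefHardy}, Theorem~\ref{HpLp}, Theorem~\ref{mainresHardy}) is set up only for injective $L$. Your sentence ``so that all Hardy space constructions are available'' glosses over this; DG$_m$ alone does not suffice, and without injectivity the conical square function $S$ is not a norm (any nonzero $f\in N(L)$ has $Sf=0$).

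The paper fixes this by decomposing $L^2(X)=\ov{R(L)}\oplus N(L)$ and writing
\[
L=\begin{pmatrix}L_0&0\\0&0\end{pmatrix},\qquad
F(L)=\begin{pmatrix}(F|_{(0,\infty)})(L_0)&0\\0&F(0)\,I_{N(L)}\end{pmatrix},
\]
where $L_0$ is the part of $L$ in $\ov{R(L)}$, which \emph{is} injective. One then applies Theorem~\ref{mainresHardy} (and Theorem~\ref{HpLp}) to $L_0$, obtaining the bound $\|(F|_{(0,\infty)})(L_0)\|_{L^p\to L^p}\lesssim\sup_n\|\om F(2^n\cdot)\|_{H^s_q}$, while the $N(L)$-block contributes the $|F(0)|$ term. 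The compatibility of this decomposition with the interpolation procedure of Fact~\ref{Interpolation-von-Christoph} is handled in \cite[Illustration~4.87]{CK}. Once you insert this step, your argument matches the paper's proof.
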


The statement a) improves the results \cite[Theorem~1.1]{B} of S.\
Blunck and \cite[Theorem 5.6]{KriPre} (see also 
\cite[Theorem~4.95]{CK}) of C.\ Kriegler in which the regularity 
orders $s>(D+1)/2$, $q=2$ and $s>D|1/p-1/2|+1/2$, $q=2$ were required,
respectively. However, \cite[Theorem~1.1]{B} also includes a weak 
type $(p_0,p_0)$ assertion for $F(L)$.

We emphasize that in the presence of pointwise Gaussian
estimates the aforementioned multiplier theorem due to X.T.\ Duong, 
E.M.\ Ouhabaz and A.\ Sikora in combination with interpolation would 
need the same order of regularity for $F$ as our main result for ensuring 
the boundedness of $F(L)$ on $L^p(X)$ for any $p\in(p_0,p_0')$.
Additionally, in the case $p_0=1$ the statement b) matches 
\cite[Theorem~3.1]{DOS} which is sharp in the sense that it includes 
the same regularity assumptions as needed for the Laplacian in 
Hörmander's multiplier theorem.

Recently, P.\ Chen, E.M.\ Ouhabaz, A.\ Sikora, and L.\ Yan obtained
a similar spectral multiplier result for operators $L$ satisfying
DG$_2$ in which the required order of differentiability is the same
as ours in c) provided that $L$ satisfies the so-called Stein-Tomas 
restriction type condition \cite[(ST$^q_{p_0,2}$)]{COSY} for some
$p_0\in[1,2)$ (cf.\ \cite[Theorem~4.1]{COSY}) . This
corresponds to the $L^{p_0}-L^2$-version of the Plancherel condition
\eqref{DOS_PlancherelEinleitung} and is thus more restrictive than
our assumption. The approach in \cite{COSY} makes no use of Hardy 
spaces, but uses the result of \cite{B}. On the other hand, the approach 
relies heavily on the finite speed propagation property and thus the 
method of proof is restricted to the case $m=2$.

\smallskip
Examples of operators to which our results apply but those in 
\cite{DY,COSY,Dzi1,Dzi2} are not applicable include higher order 
elliptic operators in divergence form with bounded complex-valued
coefficients on $\R^D$ (cf. \cite{D95,D97}).  These operators are given by forms 
$\a\colon H_2^k(\R^D)\times H_2^k(\R^D)\to\C$ of the type
$$
\a(u,v)=\int_{\R^D}\sum_{|\al|=|\beta|=k}a_{\alpha\beta}\,\partial^\alpha u\, \overline{\partial^\beta v}\,dx\,,
$$
where $a_{\alpha\beta}\colon\R^D\to\C$ are bounded and measurable
functions. We assume $a_{\alpha\beta}=\overline{a_{\beta\alpha}}$ 
for all $\alpha,\beta$ and Garding's inequality
$$
\a(u,u)\geq\delta\|\nabla^ku\|_{L^2}^2\qquad\mbox{for all }u\in H_2^k(\R^D)
$$
for some $\delta>0$, where $\|\nabla^ku\|_{L^2}^2:=
\sum_{|\alpha|=k}\|\partial^\alpha u\|_{L^2}^2$. Then $\a$ is a
closed symmetric form. The associated operator $L$ is defined by 
$u\in\dom(L)$ and $Lu=f$ if and only if $u\in H_2^k(\R^D)$ and
$\int_{\R^D}f\overline{v}\,dx =\a(u,v)$ for all $v\in H_2^k(\R^D)$. 
In the case $D>2k$, $L$ satisfies generalized Gaussian estimates
GGE$_m(p_0,p_0')$ with $m:=2k$ and $p_0:=2D/(m+D)$ (cf.\ \cite{D95}). 
It is well-known that $p_0$ is sharp in the sense that for any
$r\notin[p_0,p_0']$ there exists an operator $L$ in the given class
for which $e^{-tL}$ cannot be extended from $L^r(\R^D)\cap L^2(\R^D)$
to a bounded linear operator on $L^r(\R^D)$ for any $t>0$ (cf.\
e.g.\ \cite[Theorem~10]{D97}).

In another paper (\cite{KunUhl}) we discuss how spectral multiplier 
theorems of the type presented here apply to the second order
Maxwell operator with measurable coefficient matrices and the Stokes
operator with Hodge boundary conditions on bounded Lipschitz domains
in $\R^3$ as well as the time-dependent Lam\'e system equipped with
homogeneous Dirichlet boundary conditions.

\section{Preliminaries}

Throughout the whole article we assume that $(X,d,\mu)$ is a space
of homogeneous type with dimension $D$ as introduced in Section
\ref{spacehom} below. To avoid repetition, we skip this assumption
in all the subsequent statements.

We make use of the notation $B(x,r):=\{y\in X:\,d(y,x)<r\}$ for the
open ball in $X$ with center $x\in X$ and radius $r\geq0$. We shall
write $\la B(x,r)$ for the $\la$-dilated ball $B(x,\la r)$ and
$A(x,r,k)$ for the annular region $B(x,(k+1)r)\setminus B(x,kr)$,
where $k\in\N_0$, $\la>0$, $r>0$, and $x\in X$. The volume of a
Borel set $\Om\subset X$ will be denoted by $|\Om|:=\mu(\Om)$.

The symbol $\cf_E$ stands for the characteristic function of a Borel
set $E\subset X$, whereas the norm $\|\cf_{E_1}T\cf_{E_2}\|_{L^p\to L^q}$
is defined via $\sup_{\|f\|_{L^p}\leq1}\|\cf_{E_1}\cdot
T(\cf_{E_2}f)\|_{L^q}$ for a bounded linear operator $T$ on
$L^2(X)$, Borel sets $E_1,E_2\subset X$, and $1\leq p\leq
q\leq\infty$.

For $p\in[1,\infty]$ the conjugate exponent $p'$ is defined by
$1/p+1/p'=1$ with the usual convention $1/\infty:=0$.

For $q\in(1,\infty)$ and $s\geq0$, let $H^s_q$ denote the Bessel
potential space on $\R$, whereas $H^s_\infty$ stands for the Hölder
space $C^s$.

In the proofs, the letters $b,C$ denote generic positive constants
that are independent of the relevant parameters involved in the
estimates and may take different values at different occurrences. We
will often use the notation $a\leqC b$ if there exists a constant
$C>0$ such that $a\leq Cb$ for two non-negative expressions $a,b$;
$a\cong b$ stands for the validity of $a\leqC b$ and $b\leqC a$.

\subsection{Spaces of homogeneous type}
\label{spacehom}

We use the general framework of {\em spaces of homogeneous type} in
the sense of Coifman and Weiss \cite{CW}, i.e.\ $(X,d)$ is a non-empty metric
space endowed with a $\sigma$-finite regular Borel measure $\mu$
with $\mu(X)>0$ which satisfies the so-called {\em doubling 
condition}, that is, there exists a constant $C>0$ such that for all 
$x\in X$ and all $r>0$
\begin{align}\label{doubling}
\mu(B(x,2r))\leq C\,\mu(B(x,r))\,.
\end{align}
It is easy to see that the doubling condition \eqref{doubling}
entails the {\em strong homogeneity property}, i.e.\ the existence 
of constants $C,D>0$ such that for all $x\in X$, all $r>0$, and all
$\la\geq1$
\begin{align}\label{doublingDim}
\mu(B(x,\la r))\leq C\la^D\mu(B(x,r))\,.
\end{align}
In the sequel the value $D$ always refers to the constant in
(\ref{doublingDim}) which will be also called {\em dimension} of
$(X,d,\mu)$. Of course, $D$ is not uniquely determined and for any
$D'\geq D$ the inequality (\ref{doublingDim}) is still valid.
However, the smaller $D$ is, the stronger will be the multiplier
theorems we are able to obtain. Therefore, we are interested in
taking $D$ as small as possible.

There is a multitude of examples of spaces of homogeneous type. The
simplest one is the Euclidean space $\R^D$, $D\in\N$, equipped with
the Euclidean metric and the Lebesgue measure. Bounded open subsets
of $\R^D$ with Lipschitz boundary endowed with the Euclidean metric
and the Lebesgue measure are also spaces of homogeneous type.

We give a short review about well-known results concerning spaces of
homogeneous type and start with a simple but useful observation
which is a direct consequence of the doubling condition
(\ref{doubling}).
\begin{fact}\label{BKKugel}
There exists a constant $C>0$ such that for all $r>0$, $x\in X$, and
$y\in B(x,r)$
\begin{align*}
C^{-1}|B(y,r)|\leq|B(x,r)|\leq C\,|B(y,r)|\,.
\end{align*}
Consequently, it holds for any $r>0$ and any $x\in X$
\begin{align}\label{doubl}
C^{-1}\leq\int_{B(x,r)}\frac1{|B(y,r)|}\,d\mu(y)\leq C\,.
\end{align}
\end{fact}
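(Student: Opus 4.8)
The plan is to deduce both inequalities directly from the doubling condition \eqref{doubling} by means of two elementary ball inclusions.

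For the first claim, fix $r>0$, $x\in X$ and $y\in B(x,r)$, so that $d(x,y)<r$. The triangle inequality then gives the inclusions $B(x,r)\subseteq B(y,2r)$ and $B(y,r)\subseteq B(x,2r)$. Combining monotonicity of $\mu$ with one application of \eqref{doubling} (to the ball of radius $r$ about $y$, respectively about $x$) yields
\[
|B(x,r)|\leq|B(y,2r)|\leq C\,|B(y,r)|
\qquad\text{and}\qquad
|B(y,r)|\leq|B(x,2r)|\leq C\,|B(x,r)|,
\]
where $C$ is the doubling constant from \eqref{doubling}. Taken together, these two estimates are exactly the asserted two-sided comparison $C^{-1}|B(y,r)|\leq|B(x,r)|\leq C\,|B(y,r)|$.

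For the second claim I would integrate this comparison in $y$ over the ball $B(x,r)$. Since every ball in a space of homogeneous type has positive, finite measure, we may divide by $|B(x,r)|$; using $C^{-1}|B(x,r)|\leq|B(y,r)|\leq C\,|B(x,r)|$ for all $y\in B(x,r)$ we obtain
\[
C^{-1}=\frac{|B(x,r)|}{C\,|B(x,r)|}\leq\int_{B(x,r)}\frac{d\mu(y)}{|B(y,r)|}\leq\frac{|B(x,r)|}{C^{-1}|B(x,r)|}=C,
\]
which is \eqref{doubl}.

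I do not expect any genuine difficulty here: the entire argument is a short geometric computation relying only on \eqref{doubling}. The one point that should be recorded is the positivity and finiteness of $|B(x,r)|$, needed so that the reciprocals and the division above are meaningful; this is a standard property of spaces of homogeneous type.
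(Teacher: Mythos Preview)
Your proof is correct and is precisely the standard argument the paper has in mind: the statement is recorded without proof as ``a direct consequence of the doubling condition (\ref{doubling})'', and your ball-inclusion plus one doubling step is exactly that consequence.
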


An essential feature of spaces of homogeneous type is the validity
of covering results which mean that, as in the Euclidean setting,
one can cover a ball of radius $r$ by balls of radius $s$ and their
number is bounded from above by a term only involving the ratio
$r/s$ and the constants in (\ref{doublingDim}) whenever $r\geq s>0$.
\begin{lemma}\label{ueberdeckung}
For each $r\geq s>0$ and $y\in X$, there exist finitely many points
$y_1,\ldots,y_K$ in $B(y,r)$ such that
\begin{enumerate}
\item[\em (i)]
$d(y_j,y_k)>s/2$ for all $j,k\in\{1,\ldots,K\}$ with $j\ne k$;
\item[\em (ii)]
$B(y,r)\subseteq\bigcup_{k=1}^KB(y_k,s)$;
\item[\em (iii)]
$K\leqC(r/s)^D$;
\item[\em (iv)]
each $x\in B(y,r)$ is contained in at most $M$ balls $B(y_k,s)$,
where $M$ depends only on the constants in (\ref{doublingDim}) and
is independent of $r,s,x,y$.
\end{enumerate}
\end{lemma}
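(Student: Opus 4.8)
The plan is to run a standard greedy maximal-separation argument and then read off all four properties from the doubling condition.

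First I would construct the points greedily. Choose $y_1 := y$. Having chosen $y_1,\dots,y_j \in B(y,r)$ pairwise $s/2$-separated, if $B(y,r)\not\subseteq\bigcup_{k=1}^j B(y_k,s)$, pick a point $y_{j+1}\in B(y,r)$ with $d(y_{j+1},y_k) \ge s$ for all $k\le j$ (such a point exists: any $x\in B(y,r)$ not covered by the union satisfies $d(x,y_k)\ge s$ for every $k$); this keeps the separation $d(y_{j+1},y_k)\ge s > s/2$. If no such point exists, the process stops and (ii) holds by construction. So it remains to show the process terminates after finitely many steps, with the count bounded as in (iii).

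For the termination and the bound, observe that the balls $B(y_k,s/4)$ are pairwise disjoint (since the centers are $s/2$-separated) and all contained in $B(y,r+s/4)\subseteq B(y, 2r)$ (using $r\ge s$). By Fact~\ref{BKKugel}, since $y_k\in B(y,r)\subseteq B(y,2r)$, we have $|B(y,2r)|\leqC |B(y_k,2r)|$, and by \eqref{doublingDim} applied with dilation factor $8r/s\ge1$ we get $|B(y_k,2r)|\leqC (r/s)^D |B(y_k,s/4)|$; combining, $|B(y_k,s/4)|\geqC (r/s)^{-D}|B(y,2r)|$ uniformly in $k$. Summing over the $K$ disjoint balls inside $B(y,2r)$ yields $K\cdot(r/s)^{-D}|B(y,2r)| \leqC |B(y,2r)|$, hence $K\leqC (r/s)^D$, which is (iii) and in particular shows the greedy process stops.

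Finally, for the bounded-overlap property (iv), fix $x\in B(y,r)$ and suppose $x\in B(y_k,s)$ for $k$ in some index set $J$. Then each such $y_k$ lies in $B(x,s)$, and the balls $B(y_k,s/4)$, $k\in J$, are pairwise disjoint and contained in $B(x, 2s)$. As before, Fact~\ref{BKKugel} together with \eqref{doublingDim} (now with a dilation factor $8$, a fixed constant) gives $|B(y_k,s/4)|\geqC |B(x,2s)|$ with a constant depending only on the constants in \eqref{doublingDim}, and summing the disjoint balls inside $B(x,2s)$ forces $\#J\leqC 1 =: M$. The main obstacle here is just bookkeeping — consistently passing between balls centered at different points via Fact~\ref{BKKugel} and keeping track that the constant in (iv) depends only on the doubling data and not on $r,s,x,y$ — but there is no real difficulty.
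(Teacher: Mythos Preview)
Your proof is correct and follows the standard maximal-separation argument. The paper does not actually give a proof of this lemma: it cites \cite[Lemmas~6.1,~6.2]{AuMar} and \cite[pp.~68~ff.]{CW} for the existence of the separated covering family satisfying (i) and (ii), and simply remarks that (iii) and (iv) ``can be easily shown'' for such a family. Your greedy construction together with the disjoint-balls volume count via Fact~\ref{BKKugel} and \eqref{doublingDim} is precisely the argument one would find in those references, so there is nothing to compare.
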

The existence of $y_1,\ldots,y_K\in B(y,r)$ with the properties (i)
and (ii) is well-known (cf.\ e.g.\ \cite[Lemmas~6.1,~6.2]{AuMar} or
\cite[pp.~68~ff.]{CW}). It can be easily shown that (iii) and (iv)
are valid for such a family of points.

\subsection{Off-diagonal estimates}

We collect some properties of two-ball estimates in the next
statement which are proved in \cite[Proposition~2.1]{BKLeg}.
\begin{fact}\label{GGEequiv}
Let $1\leq p\leq q\leq\infty$, $r>0$, $\om>1$, and
$g(\la):=Ce^{-b\la^\om}$ for some constants $b,C>0$. Suppose
that $T$ is a bounded linear operator on $L^2(X)$. Then the
following assertions are equivalent:
\begin{enumerate}
\item[\bf a)]
For all $x,y\in X$, it holds
$$
\bigl\|\cf_{B(x,r)}T\cf_{B(y,r)}\bigr\|_{L^p\to L^q}
\leq
|B(x,r)|^{-(\frac1p-\frac1q)}g\biggl(\frac{d(x,y)}r\biggr)\,.
$$
\item[\bf b)]
For all $x,y\in X$ and all $u,v\in[p,q]$ with $u\leq v$, it holds
$$
\bigl\|\cf_{B(x,r)}T\cf_{B(y,r)}\bigr\|_{L^u\to L^v}
\leq
|B(x,r)|^{-(\frac1u-\frac1v)}g\biggl(\frac{d(x,y)}r\biggr)\,.
$$
\item[\bf c)]
For all $x\in X$ and all $k\in\N$, it holds
$$
\bigl\|\cf_{B(x,r)}T\cf_{A(x,r,k)}\bigr\|_{L^p\to L^q}
\leq
|B(x,r)|^{-(\frac1p-\frac1q)}g(k)\,.
$$
\item[\bf d)]
For all balls $B_1,B_2\subset X$ and all $\al,\beta\geq0$ with
$\al+\beta=\frac1p-\frac1q$, it holds
$$
\bigl\|\cf_{B_1}v_r^\al Tv_r^\beta\cf_{B_2}\bigr\|_{L^p\to L^q}
\leq
g\biggl(\frac{\dist(B_1,B_2)}r\biggr)\,,
$$
where $\dist(B_1,B_2):=\inf\{d(x,y):\,x\in B_1, y\in B_2\}$ and
$v_r(x):=|B(x,r)|$ for $x\in X$.
\end{enumerate}
This statement is written modulo identification of $g$ and $\wt g$,
where $\wt g(\la)=ag(c\la)$ for some constants $a,c>0$ independent
of $r$, $\om$, $T$.
\end{fact}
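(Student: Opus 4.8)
The plan is to tie all four assertions to a): I would prove b) $\Rightarrow$ a), a) $\Rightarrow$ b), a) $\Leftrightarrow$ c) and a) $\Leftrightarrow$ d). The only tools are the covering Lemma~\ref{ueberdeckung}, the comparison of volumes of nearby balls from Fact~\ref{BKKugel} together with the doubling bound $|B(y,r)|\le C(1+d(x,y)/r)^D|B(x,r)|$ coming from \eqref{doublingDim}, and the elementary remark that, because $\om>1$, any product $P(\la)g(\la)$ with $P$ a polynomial is dominated by some $\wt g(\la)=a\,g(c\la)$; this last point is exactly what ``modulo identification of $g$ and $\wt g$'' provides and it will be used repeatedly to absorb the polynomial losses below. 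The implication b) $\Rightarrow$ a) is trivial (take $u=p$, $v=q$). For a) $\Rightarrow$ b), fix $u,v\in[p,q]$ with $u\le v$ and $f$ supported in $B(y,r)$; Hölder's inequality on the finite-measure balls gives $\|f\|_{L^p}\le|B(y,r)|^{1/p-1/u}\|f\|_{L^u}$ and $\|\cf_{B(x,r)}Tf\|_{L^v}\le|B(x,r)|^{1/v-1/q}\|\cf_{B(x,r)}Tf\|_{L^q}$, and inserting a) between the two---then replacing $|B(y,r)|$ by $C(1+d(x,y)/r)^D|B(x,r)|$ and absorbing this factor into $\wt g$---produces precisely the bound in b).

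For a) $\Rightarrow$ c), given $x\in X$ and $k\in\N$, I would cover $A(x,r,k)\subseteq B(x,(k+1)r)$ by balls $B(y_1,r),\dots,B(y_K,r)$ with $K\lesssim(k+1)^D$ as furnished by Lemma~\ref{ueberdeckung}. Any $y_j$ whose ball meets $A(x,r,k)$ satisfies $d(x,y_j)\ge(k-1)r$, so a) bounds $\|\cf_{B(x,r)}T\cf_{B(y_j,r)}\|_{L^p\to L^q}$ by $|B(x,r)|^{-(1/p-1/q)}g(k-1)$; writing $A(x,r,k)$ as a disjoint union of pieces $A_j\subseteq B(y_j,r)$, using the triangle inequality and Hölder over the at most $K$ indices $j$, and absorbing $K^{1/p'}g(k-1)$ into $\wt g(k)$ yields c). For c) $\Rightarrow$ a) I split into two ranges. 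If $d(x,y)\ge2r$ and $k:=\lfloor d(x,y)/r\rfloor\ge2$, then $B(y,r)\subseteq A(x,r,k-1)\cup A(x,r,k)\cup A(x,r,k+1)$ and three applications of c) finish the estimate. If $d(x,y)<2r$, then $g(d(x,y)/r)$ is bounded below and the claim reduces to the diagonal bound $\|\cf_{B(x,r)}T\cf_{B(x,r)}\|_{L^p\to L^q}\lesssim|B(x,r)|^{-(1/p-1/q)}$, which is the degenerate instance $y=x$ of a) (equivalently $B_1=B_2$ of d)); I would treat this as part of the common content of the equivalent statements.

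For a) $\Leftrightarrow$ d), the direction d) $\Rightarrow$ a) is immediate: choosing $B_1=B(x,r)$, $B_2=B(y,r)$, $\al=\frac1p-\frac1q$ and $\beta=0$, one has $v_r^\al\cong|B(x,r)|^\al$ on $B(x,r)$ by Fact~\ref{BKKugel}, while $\dist(B(x,r),B(y,r))\ge d(x,y)-2r$ turns $g(\dist/r)$ into $\wt g(d(x,y)/r)$. For a) $\Rightarrow$ d), given arbitrary balls $B_1,B_2$ and $\al+\beta=\frac1p-\frac1q$, I would cover $B_1$ by balls $B(u_i,r)$ and $B_2$ by balls $B(w_j,r)$ (Lemma~\ref{ueberdeckung}) and split each into disjoint pieces $E_i\subseteq B(u_i,r)$, $F_j\subseteq B(w_j,r)$; since $v_r^\al\cong|B(u_i,r)|^\al$ on $E_i$ and $v_r^\beta\cong|B(w_j,r)|^\beta$ on $F_j$, a) controls the $(i,j)$-block of $\cf_{B_1}v_r^\al Tv_r^\beta\cf_{B_2}$ by a constant $c_{ij}\lesssim(1+d(u_i,w_j)/r)^{D\beta}g(d(u_i,w_j)/r)$, the volume ratio again contributing a polynomial factor. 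Because the $E_i$ are pairwise disjoint, $\|\sum_{i,j}(\cdot)\|_{L^q}^q=\sum_i\|\sum_j(\cdot)\|_{L^q}^q$, so the full operator norm is controlled by the $\ell^p\to\ell^q$ norm of the matrix $(c_{ij})$; a Schur test using $\sup_i\sum_j c_{ij}\lesssim\wt g(\dist(B_1,B_2)/r)$ and the symmetric estimate $\sup_j\sum_i c_{ij}\lesssim\wt g(\dist(B_1,B_2)/r)$---each obtained by summing a fixed polynomial power of $(1+n)$ against $g(n)$ over the shells $n\ge\dist(B_1,B_2)/r$---gives d) with a constant independent of the radii of $B_1$ and $B_2$.

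The step I expect to be the main obstacle is precisely the uniform bookkeeping of errors underlying all of the above: one must check that the polynomial factors produced by the covering cardinalities $K\lesssim(k+1)^D$, by the doubling comparisons of volumes of $r$-balls at distance $d$, and by the Schur constants in a) $\Rightarrow$ d), are all of strictly smaller growth than $g(\la)=Ce^{-b\la^\om}$. This is exactly where the hypothesis $\om>1$ enters---it forces $e^{-b\la^\om}$ to dominate any polynomial even after a fixed dilation $\la\mapsto c\la$---and it is the reason the statement must be read modulo the identification $g\sim\wt g$. A secondary subtlety to keep in mind is that in a) $\Rightarrow$ d) one cannot afford the crude triangle inequality over the covering of a possibly large ball $B_1$, since that would leave behind an error depending on $|B_1|$; the disjointness of the pieces $E_i$ and the Schur test are genuinely needed to remove it.
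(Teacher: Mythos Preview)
The paper does not give its own proof of this Fact; it is stated as a quotation of \cite[Proposition~2.1]{BKLeg} (Blunck--Kunstmann), so there is no in-paper argument to compare against. Your outline follows the standard route one finds in that reference: H\"older for a)$\Rightarrow$b), covering plus absorption of polynomial losses into $\wt g$ for a)$\Leftrightarrow$c), and a covering/Schur argument for a)$\Leftrightarrow$d). All of this is sound, and your remark that the hypothesis $\om>1$ is precisely what allows the polynomial factors $(1+k)^D$, $(1+d(x,y)/r)^D$, etc.\ to be swallowed by $\wt g$ is the correct diagnosis of where that assumption is used.

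The one point you flag but do not resolve --- the diagonal bound in c)$\Rightarrow$a) when $d(x,y)<2r$ --- is a genuine issue with the statement \emph{as printed}, not with your strategy. As written, c) ranges over $k\in\N$ only, and then the counterexample $T=\lambda I$ (which satisfies c) trivially since $B(x,r)\cap A(x,r,k)=\emptyset$ for $k\ge1$) shows that c)$\Rightarrow$a) cannot hold with constants independent of $T$. The resolution is that c) should include $k=0$ (recall $A(x,r,0)=B(x,r)$ in the paper's notation); indeed, when the paper itself invokes this Fact in the proof of Lemma~\ref{DOS_Lem4_1}, it uses the annular estimate ``for every $k\in\N_0$''. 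With $k=0$ included, your c)$\Rightarrow$a) argument goes through without the ad hoc appeal to ``common content''. You should simply note this correction rather than leave the gap.
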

Since the estimate stated in c) involves an annular set $A(x,r,k)$,
we call bounds of this kind {\em estimates of annular type}.

A very useful feature of generalized Gaussian estimates is that they
can be extended from real times $t>0$ to complex times $z\in\C$ with
$\Re z>0$. The following result is taken from
\cite[Theorem~2.1]{BGGE} whose proof relies on the
Phragm\'en-Lindelöf theorem.
\begin{fact}\label{GGEkomplex}
Let $m\geq2$, $1\leq p\leq2\leq q\leq\infty$, and $L$ be a
non-negative, self-adjoint operator on $L^2(X)$. Assume that there
are constants $b,C>0$ such that for any $t>0$ and $x,y\in X$
\begin{align*}
\bigl\|\cf_{B(x,t^{1/m})}e^{-tL}\cf_{B(y,t^{1/m})}\bigr\|_{L^p\to L^q}
\leq C\,|B(x,t^{1/m})|^{-(\frac1{p}-\frac1{q})} \exp
\Biggl(-b\biggl(\frac{d(x,y)}{t^{1/m}}\biggr)^{\frac
m{m-1}}\Biggr)\,.
\end{align*}
Then there exist constants $b',C'>0$ such that for all $x,y\in X$
and all $z\in\C$ with $\Re z>0$
$$
\bigl\|\cf_{B(x,r_z)}e^{-zL}\cf_{B(y,r_z)}\bigr\|_{L^p\to L^q} \leq
C'\,|B(x,r_z)|^{-(\frac1{p}-\frac1{q})} \biggl(\frac{|z|}{\Re
z}\biggr)^{D(\frac1p-\frac1{q})}\exp
\Biggl(-b'\biggl(\frac{d(x,y)}{r_z}\biggr)^{\frac m{m-1}}\Biggr)\,,
$$
where $r_z:=(\Re z)^{1/m-1}|z|$.
\end{fact}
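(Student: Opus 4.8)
The plan is to upgrade the real-time estimate to complex times by a complex-analytic interpolation argument. The basic principle: for suitable test functions $f,g$ the scalar map $z\mapsto\langle e^{-zL}f,g\rangle$ is holomorphic on the right half-plane $\{\Re z>0\}$, it satisfies the strong Gaussian bound of the hypothesis along the positive real axis, and it satisfies the trivial bound $|\langle e^{-zL}f,g\rangle|\leq\|f\|_{L^2}\|g\|_{L^2}$ everywhere on $\{\Re z>0\}$ by the spectral theorem (as $\|e^{-zL}\|_{L^2\to L^2}\leq1$). The Phragm\'en-Lindel\"of principle then transports a version of the Gaussian bound into the half-plane, the price being the factor $(|z|/\Re z)$ in the conclusion, which ultimately stems from the mismatch between the scale $t^{1/m}$ of the hypothesis and the scale $r_z=(\Re z)^{1/m-1}|z|$ of the conclusion.

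I would first settle the Davies-Gaffney case $p=q=2$. Fix $x,y\in X$, put $\rho:=d(x,y)$, fix $z_0$ with $\Re z_0>0$ and set $r:=r_{z_0}$; we may assume $\rho$ is large compared with $r$, the complementary case being covered by the trivial bound $\|e^{-z_0L}\|_{L^2\to L^2}\leq1$. For bounded $f,g$ supported in $B(y,r)$ resp.\ $B(x,r)$ with $L^2$-norms $\leq1$ (such functions lie in $L^2(X)$ since balls have finite measure, and suffice), set $\Phi(z):=\langle e^{-zL}f,g\rangle$, holomorphic on $\{\Re z>0\}$ with $|\Phi|\leq1$ there. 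Covering $B(x,r)$ and $B(y,r)$ by balls of radius $t^{1/m}$ and invoking real-time DG$_m$ in the annular form of Fact~\ref{GGEequiv}, one obtains on the positive real axis $|\Phi(t)|\leq C_\eps\exp(-(b-\eps)\rho^{m/(m-1)}t^{-1/(m-1)})$ for each $\eps>0$, the polynomial covering loss being $\lesssim(r/\rho)^{2D}\leq1$ in this regime. Since $(\rho/r_z)^{m/(m-1)}=\rho^{m/(m-1)}(\Re z)|z|^{-m/(m-1)}$ and, for the principal branch, $\Re(z^{-1/(m-1)})\geq(\Re z)|z|^{-m/(m-1)}$ on $\{\Re z>0\}$, it now suffices to show that $F(z):=\exp(a\rho^{m/(m-1)}z^{-1/(m-1)})\Phi(z)$ is bounded on $\{\Re z>0\}$ for a fixed $a\in(0,b)$; this gives $|\Phi(z_0)|\lesssim\exp(-a(\rho/r)^{m/(m-1)})$, which is the desired $L^2\to L^2$ bound. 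But $F$ is holomorphic, bounded at infinity on $\{\Re z>0\}$, bounded on the positive real axis by the above (choosing $\eps<b-a$), and --- when $m=2$ --- bounded on the imaginary axis, since $\Re(z^{-1})=0$ there; hence the Phragm\'en-Lindel\"of principle for the half-plane applies, up to a routine exhaustion near the origin where $F$ is singular. The general exponents $(p,q)$ are then recovered by writing $e^{-zL}=e^{-\tau L}e^{-z_2L}e^{-\tau L}$ with $\tau:=(\Re z)/3$ and $z_2:=z-2\tau$ (so $\Re z_2\cong\Re z$ and $r_{z_2}\cong r_z$), inserting between the factors a partition of $X$ into pieces of radius $\cong(\Re z)^{1/m}$ furnished by Lemma~\ref{ueberdeckung}, estimating the two outer real-time factors in $L^p\to L^2$ resp.\ $L^2\to L^q$ by Fact~\ref{GGEequiv} and the complex middle factor in $L^2\to L^2$ by the Davies-Gaffney bound just established, and summing; keeping the covering losses sharp through the $\ell^q/\ell^{p'}$ summation of the outer pieces yields exactly $|B(x,r_z)|^{-(1/p-1/q)}(|z|/\Re z)^{D(1/p-1/q)}$, while the three Gaussian factors merge into a single one in $d(x,y)/r_z$ by convexity of $t\mapsto t^{m/(m-1)}$.

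The main obstacle is the Phragm\'en-Lindel\"of step for $m>2$. There the multiplier $\exp(a\rho^{m/(m-1)}z^{-1/(m-1)})$ no longer has modulus one on the imaginary axis --- at $z=is$ its modulus equals $\exp(a\rho^{m/(m-1)}|s|^{-1/(m-1)}\cos\frac{\pi}{2(m-1)})$, which is unbounded as $s\to0$ --- so $F$ is not bounded on the boundary of the half-plane and the clean argument above fails. I would get around this by running Phragm\'en-Lindel\"of in the subsectors $\{|\arg z|\leq\frac{\pi}{2}-\delta\}$ instead, where on the bounding rays the factor $\cos\frac{\pi/2-\delta}{m-1}$ remains in a harmless range, and then controlling the resulting $\delta$-dependence as $\delta\to0$: in the $L^p\to L^q$ setting it is absorbed by the $(|z|/\Re z)$-factor already present, and in the pure $L^2$ case it is removed by augmenting the multiplier with a factor $e^{-\eps z}$ that enforces decay at infinity and letting $\eps\to0$. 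The remaining work --- the covering bookkeeping in the second step and the endpoint $q=\infty$ (handled by letting $q\to\infty$) --- is routine.
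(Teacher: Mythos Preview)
The paper does not prove this statement; it records it as a Fact taken from \cite[Theorem~2.1]{BGGE}, noting only that the proof there ``relies on the Phragm\'en-Lindel\"of theorem''. So there is no argument in the paper to compare against, but your approach is exactly the one the paper points to, and for $m=2$ your sketch is essentially complete: after the substitution $w=1/z$ the singularity at the origin becomes order-one growth at infinity, which is admissible for Phragm\'en--Lindel\"of in each quarter-plane. The reduction from general $(p,q)$ to $(2,2)$ via $e^{-zL}=e^{-\tau L}e^{-z_2L}e^{-\tau L}$ with real $\tau=\Re z/3$ is also correct; the needed $L^p\to L^2$ and $L^2\to L^q$ real-time bounds on the outer factors follow from the hypothesis by Fact~\ref{GGEequiv}~b).

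The one genuine gap is your fix for $m>2$. Restricting to subsectors $\{|\arg z|\leq\pi/2-\delta\}$ does not close the argument: on the bounding rays you still only have the trivial bound $|\Phi|\leq1$, so $F(z)=\exp\bigl(a\rho^{m/(m-1)}z^{-1/(m-1)}\bigr)\Phi(z)$ is still unbounded there as $z\to0$; the damping $e^{-\eps z}$ you mention acts at the wrong end; and in the $L^2\to L^2$ case there is no $(|z|/\Re z)$-factor available to absorb a $\delta$-loss. What does work is to treat the upper and lower quarter-planes separately and use a \emph{rotated} multiplier $\exp\bigl((A+iA')z^{-1/(m-1)}\bigr)$ with $A'=\mp A\cot\frac{\pi}{2(m-1)}$ chosen so that its modulus is exactly $1$ on the relevant half of the imaginary axis while remaining bounded on the positive reals; after passing to $w=1/z$ the interior growth is of order $|w|^{1/(m-1)}<|w|$, so Phragm\'en--Lindel\"of in the quarter-plane applies, and unwinding yields the claimed bound with some $b'\in(0,b)$. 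This is the argument in \cite{BGGE}; your overall strategy is right, only this step needed a sharper choice of auxiliary function.
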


Here the radius of the balls in the above two-ball estimate for
$e^{-zL}$ depends on the value of $z$. The next lemma provides
two-ball estimates with balls of arbitrary radius $r>0$ by the cost
of an additional factor involving the ratio of $r$ and $r_z$ as well
as the dimension of the underlying space of homogeneous type. Also a
corresponding version for estimates of annular type is given. We
postpone the proof to Section \ref{proofs}.
\begin{lemma}\label{GGEkomplexbelRadius}
Suppose that the assumptions of Fact \ref{GGEkomplex} are fulfilled
and, as before, define $r_z:=(\Re z)^{1/m-1}|z|$ for each
$z\in\C$ with $\Re z>0$.
\begin{enumerate}
\item[\bf a)]
There exist constants $b',C'>0$ such that for all $r>0$, $x,y\in X$,
and $z\in\C$ with $\Re z>0$
\begin{align*}
&\bigl\|\cf_{B(x,r)}e^{-zL}\cf_{B(y,r)}\bigr\|_{L^p\to L^q}
\\&\quad\leq
C'\,|B(x,r)|^{-(\frac1{p}-\frac1{q})}
\biggl(1+\frac{r}{r_z}\biggr)^{D(\frac1{p}-\frac1{q})}
\biggl(\frac{|z|}{\Re z}\biggr)^{D(\frac1{p}-\frac1{q})}
\exp\Biggl(-b'\biggl(\frac{d(x,y)}{r_z}\biggr)^{\frac m{m-1}}\Biggr)\,.
\end{align*}

\item[\bf b)]
There exist constants $b'',C''>0$ such that for all $k\in\N$, $r>0$,
$x\in X$, and $z\in\C$ with $\Re z>0$
\begin{align*}
&\bigl\|\cf_{B(x,r)}e^{-zL}\cf_{A(x,r,k)}\bigr\|_{L^p\to L^q}
\\&\quad\leq
C''\,|B(x,r)|^{-(\frac1{p}-\frac1{q})}
\biggl(1+\frac{r}{r_z}\biggr)^{D(\frac1{p}-\frac1{q})}
\biggl(\frac{|z|}{\Re z}\biggr)^{D(\frac1{p}-\frac1{q})}k^D
\exp\Biggl(-b''\biggl(\frac{r}{r_z}\,k\biggr)^{\frac m{m-1}}\Biggr)\,.
\end{align*}
\end{enumerate}
\end{lemma}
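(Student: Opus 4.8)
The plan is to derive both estimates from Fact~\ref{GGEkomplex} by a covering argument. The point is that Fact~\ref{GGEkomplex} gives a two-ball estimate only for balls of the \emph{specific} radius $r_z=(\Re z)^{1/m-1}|z|$, and we need to pass to arbitrary radius $r>0$. There are two regimes. If $r\leq r_z$, then $B(x,r)\subseteq B(x,r_z)$ and $B(y,r)\subseteq B(y,r_z)$, so $\cf_{B(x,r)}e^{-zL}\cf_{B(y,r)}=\cf_{B(x,r)}\bigl(\cf_{B(x,r_z)}e^{-zL}\cf_{B(y,r_z)}\bigr)\cf_{B(y,r)}$ and the $L^p\to L^q$ norm of the right-hand side is at most that of $\cf_{B(x,r_z)}e^{-zL}\cf_{B(y,r_z)}$; here one only has to compare $|B(x,r_z)|^{-(1/p-1/q)}$ with $|B(x,r)|^{-(1/p-1/q)}$, which by the strong homogeneity property \eqref{doublingDim} costs exactly a factor $(r_z/r)^{D(1/p-1/q)}\leq(1+r/r_z)^{D(1/p-1/q)}$ once combined trivially, and the exponential factor is unchanged since $d(x,y)$ and $r_z$ are untouched.

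The substantial case is $r>r_z$. Here I would cover each of $B(x,r)$ and $B(y,r)$ by balls of radius $r_z$: by Lemma~\ref{ueberdeckung} there are points $x_1,\dots,x_K\in B(x,r)$ and $y_1,\dots,y_N\in B(y,r)$ with $K,N\leqC(r/r_z)^D$ such that $B(x,r)\subseteq\bigcup_j B(x_j,r_z)$, $B(y,r)\subseteq\bigcup_k B(y_k,r_z)$, and with bounded overlap $M$. Then
\begin{align*}
\bigl\|\cf_{B(x,r)}e^{-zL}\cf_{B(y,r)}\bigr\|_{L^p\to L^q}
\leq\sum_{j,k}\bigl\|\cf_{B(x_j,r_z)}e^{-zL}\cf_{B(y_k,r_z)}\bigr\|_{L^p\to L^q}\,,
\end{align*}
and each summand is controlled by Fact~\ref{GGEkomplex}. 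Using $|B(x_j,r_z)|\cong|B(x,r_z)|$ (Fact~\ref{BKKugel}, since $x_j\in B(x,r)$, after enlarging constants; more precisely one compares via $B(x,r+r_z)$) and then $|B(x,r_z)|^{-1}\leqC(r/r_z)^{D}|B(x,r)|^{-1}$, the volume prefactors combine with the $KN\leqC(r/r_z)^{2D}$ terms into a power of $r/r_z$. The exponential factor $\exp(-b'(d(x_j,y_k)/r_z)^{m/(m-1)})$ is handled by the triangle inequality: $d(x_j,y_k)\geq d(x,y)-2r$, and — the standard trick — one writes $\exp(-b'(\cdot)^{m/(m-1)})$ as a product of two such exponentials, spending half of the decay to absorb \emph{all} the polynomial-in-$(r/r_z)$ losses (valid because exponential decay beats any polynomial) and keeping the other half, which after adjusting $b'$ yields $\exp(-b''(d(x,y)/r_z)^{m/(m-1)})$. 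When $d(x,y)\leq 4r$ the exponential is $\cong1$ and one simply notes the claimed right-hand side is then $\gtrsim|B(x,r)|^{-(1/p-1/q)}(r/r_z)^{D(1/p-1/q)}$, which dominates the $L^2\to L^2$-type crude bound; I'd split into the cases $d(x,y)\gtrsim r$ and $d(x,y)\lesssim r$ to make this clean.

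For part~b), the annular estimate, the argument is the same except that I cover $A(x,r,k)=B(x,(k+1)r)\setminus B(x,kr)$ by balls of radius $r_z$, needing $\leqC((k+1)r/r_z)^D$ of them by Lemma~\ref{ueberdeckung}, and cover $B(x,r)$ as before. Now for centers $x_j\in B(x,r)$ and $w_\ell\in A(x,r,k)$ one has $d(x_j,w_\ell)\geq kr - r\geq (k-1)r\gtrsim kr$ for $k\geq2$ (and $k=1$ is again the trivial regime), so the exponential becomes $\exp(-b'(kr/r_z)^{m/(m-1)})$, and absorbing the polynomial factors $k^{O(D)}(r/r_z)^{O(D)}$ into a fraction of this exponential produces the stated $k^D\exp(-b''(kr/r_z)^{m/(m-1)})$. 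The $(|z|/\Re z)^{D(1/p-1/q)}$ factor in both parts is simply carried along from Fact~\ref{GGEkomplex} (each application of it contributes one such factor, and the bounded overlap keeps the total number of ``effective'' applications under control, or one just bounds it by the single worst factor). The main obstacle is purely bookkeeping: tracking the various powers of $r/r_z$, $k$, $|z|/\Re z$ and the volume ratios through the covering sum and confirming they all fit under the advertised $(1+r/r_z)^{D(1/p-1/q)}(|z|/\Re z)^{D(1/p-1/q)}$ (resp.\ times $k^D$) after trading against the Gaussian; no new idea beyond the standard ``exponential eats polynomial'' manoeuvre is required.
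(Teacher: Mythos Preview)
Your direct covering approach differs from the paper's, which is cleaner. For part~a), rather than covering $B(x,r)$ and $B(y,r)$ by $\sim(r/r_z)^D$ balls of radius $r_z$ and tracking the resulting polynomial losses, the paper invokes the equivalence a)$\Leftrightarrow$d) of Fact~\ref{GGEequiv}: the two-ball estimate at scale $r_z$ is equivalent to
\[
\bigl\|\cf_{B_1}v_{r_z}^{\frac1p-\frac1q}e^{-zL}\cf_{B_2}\bigr\|_{L^p\to L^q}\leqC\Bigl(\frac{|z|}{\Re z}\Bigr)^{D(\frac1p-\frac1q)}\exp\Biggl(-b'\biggl(\frac{\dist(B_1,B_2)}{r_z}\biggr)^{\frac m{m-1}}\Biggr)
\]
for \emph{arbitrary} balls $B_1,B_2$. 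One then only has to compare $v_r$ with $v_{r_z}$ via doubling (producing the factor $(1+r/r_z)^{D(1/p-1/q)}$), pull the weight out using $v_r\cong|B(x,r)|$ on $B(x,r)$ (Fact~\ref{BKKugel}), and use $\dist(B(x,r),B(y,r))\ge d(x,y)/2$ when $d(x,y)\ge4r$. For part~b) the paper again avoids re-covering: it expresses the annular norm as an integral of two-ball norms at radius $r$ via \cite[Lemma~3.4]{BKLeg} and feeds in part~a). Your route can be made to work for $d(x,y)\gtrsim r$, but incurs exactly the bookkeeping you anticipate; the paper's route eliminates it.

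There is a genuine gap in your treatment of the regime $r>r_z$, $d(x,y)\le4r$. You assert that ``the exponential is $\cong1$'', but the exponential in the target bound is $\exp\bigl(-b'(d(x,y)/r_z)^{m/(m-1)}\bigr)$, which for $r_z\ll d(x,y)\le4r$ is \emph{tiny}, not $\cong1$; a crude $O(1)$ bound on the left-hand side is therefore insufficient. Your absorption argument also collapses here: $d(x_j,y_k)\ge d(x,y)-2r$ gives nothing once $d(x,y)\le2r$, and for $2r\le d(x,y)\le4r$ it yields only $d(x_j,y_k)\ge0$, so there is no exponential decay available to split and spend. (The paper, too, first proves the estimate under the restriction $d(x,y)\ge4r$ and then asserts it can be removed ``by the cost of changing the constants''; regardless, your proposed mechanism for that step is incorrect.) A minor slip as well: in the case $r\le r_z$ the inequality $(r_z/r)^{D(1/p-1/q)}\le(1+r/r_z)^{D(1/p-1/q)}$ that you wrote is false, though in fact no loss occurs there since $|B(x,r_z)|\ge|B(x,r)|$ already gives $|B(x,r_z)|^{-(1/p-1/q)}\le|B(x,r)|^{-(1/p-1/q)}$.
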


In Section \ref{secHardy} we consider specific Hardy spaces
associated with an operator $L$. For defining and working with these
spaces it is enough to require a special form of two-ball estimates
on $L^2(X)$ for the semigroup $(e^{-tL})_{t>0}$ generated by $-L$,
so-called Davies-Gaffney estimates.
\begin{definition}
Let $m\geq2$. We say that a family $\{S_t:t>0\}$ of bounded linear
operators acting on $L^2(X)$ satisfies {\em Davies-Gaffney
estimates of order $m$} if there exist constants $b,C>0$ such that
for all $t>0$ and all $x,y\in X$
\begin{align}\label{DG}
\bigl\|\cf_{B(x,t^{1/m})}S_t\cf_{B(y,t^{1/m})}\bigr\|_{L^2\to L^2}
\leq
C\exp\Biggl(-b\biggl(\frac{d(x,y)}{t^{1/m}}\biggr)^{\frac m{m-1}}\Biggr)\,.
\end{align}
In order to indicate the validity of Davies-Gaffney estimates of
order $m$, we later use the abbreviation DG$_{m}$. If
$\{S_t:t>0\}=(e^{-tL})_{t>0}$ is a semigroup on $L^2(X)$ generated
by $-L$, we shall also say that $L$ satisfies Davies-Gaffney
estimates when the semigroup $(e^{-tL})_{t>0}$ enjoys this property.
\end{definition}

Estimates of the type (\ref{DG}) were first introduced by E.B.\
Davies (\cite{D92}) inspired by ideas of M.P.\ Gaffney
(\cite{Gaffney}). They hold naturally for many operators, including
large classes of self-adjoint, elliptic differential 
operators or Schrödinger operators with real-valued potentials (cf.\
e.g.\ \cite{Phragmen}). Davies-Gaffney estimates were extensively
studied in the recent series of papers \cite{AuMar06},
\cite{AuMar07a}, \cite{AuMar}, \cite{AuMar08} by P.\ Auscher and
J.M.\ Martell (see also \cite{Phragmen}, \cite{DL}, \cite{HLMMY}).
We mention that in the literature one usually finds a slightly
different definition of Davies-Gaffney estimates in which the
validity of (\ref{DG}) is required for all open subsets of $X$. It
is known that the definitions coincide for $m=2$ (cf.\ 
\cite[Lemma~3.1]{Phragmen}).

Finally, we quote a statement originally given in \cite[Proposition
3.1]{HLMMY} for operators satisfying DG$_2$. However, with some
minor modifications the proof can be adapted to include
Davies-Gaffney estimates of arbitrary order $m\geq2$ as well. For a
detailed proof we refer to Section~\ref{proofs}.

\begin{lemma}\label{DLProp2.3}
Let $m\geq2$ and $L$ be a non-negative, self-adjoint operator on
$L^2(X)$. If $L$ fulfills Davies-Gaffney estimates DG$_m$, then for
each $K\in\N$ the family of operators
$$
\bigl\{(tL)^{K}e^{-tL}:\,t>0\bigr\}
$$
satisfies also Davies-Gaffney estimates DG$_m$ with constants
depending only on $K$ and the constants in the doubling condition
(\ref{doublingDim}) and the Davies-Gaffney condition (\ref{DG}) for
the semigroup $(e^{-tL})_{t>0}$.
\end{lemma}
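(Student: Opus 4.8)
The plan is to reduce the statement for $(tL)^K e^{-tL}$ to the Davies--Gaffney estimate already assumed for the semigroup itself, exploiting that $(tL)^K e^{-tL}$ factors through the semigroup at a fractional time and using Fact \ref{GGEequiv} to pass between two-ball and annular formulations. First I would fix $t>0$, $x,y\in X$, and write $s:=d(x,y)$; the case $s\leq C t^{1/m}$ is trivial since $\|(tL)^K e^{-tL}\|_{L^2\to L^2}\leqC 1$ by the spectral theorem (the function $\la\mapsto (t\la)^K e^{-t\la}$ is bounded on $[0,\infty)$ uniformly in $t$), so I may assume $s$ is large compared to $t^{1/m}$. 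The key algebraic identity is
\begin{align*}
(tL)^K e^{-tL}=c_K\,(tL)^K\!\left(e^{-\frac t2 L}\right)^{\!2}
= \wt c_K\int_0^\infty \!(tL)^K e^{-\frac t2 L}\,\text{(resolvent-type factor)}\,,
\end{align*}
but the cleaner route is simply $(tL)^K e^{-tL}=\bigl[(tL)^K e^{-\frac t2 L}\bigr]\,e^{-\frac t2 L}$, where the first bracket is bounded on $L^2$ with norm $\leqC 1$ and the second satisfies DG$_m$ with time $t/2$, hence DG$_m$ with time $t$ after adjusting constants.

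The main technical step is to localize this factorization spatially. Using Lemma \ref{ueberdeckung}, cover the annulus $A(x,t^{1/m},k)$ (or more simply the ball-pair geometry) by balls of radius $t^{1/m}$; then insert a partition of the identity $\mathrm{id}=\sum_j \cf_{B(y_j,t^{1/m})}$ (made disjoint) between the two factors:
\begin{align*}
\cf_{B(x,t^{1/m})}(tL)^K e^{-tL}\cf_{B(y,t^{1/m})}
=\sum_j \cf_{B(x,t^{1/m})}(tL)^K e^{-\tfrac t2 L}\cf_{B(y_j,t^{1/m})}\;
\cf_{B(y_j,t^{1/m})} e^{-\tfrac t2 L}\cf_{B(y,t^{1/m})}\,,
\end{align*}
with each $y_j$ ranging over the covering of $X$. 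For each $j$ at least one of $d(x,y_j)$ and $d(y_j,y)$ is $\geq s/2$, so in the product of the two operator norms one of the factors carries the full exponential decay $\exp(-b'(s/t^{1/m})^{m/(m-1)})$ while the other is controlled by the global $L^2$-bound $\leqC 1$ (for the first factor) or by DG$_m$ without loss (for the second). Since $(tL)^K e^{-\frac t2 L}$ is, by the spectral theorem, a bounded function of $L$ with $L^2$-operator norm $\leqC 1$, and it also inherits DG$_m$ (one can see this by writing it as $c_K$ times a contour integral of $e^{-zL}$ over $\Re z\cong t$ and invoking the complex-time estimate, or directly via Fact \ref{GGEkomplex}), each summand is bounded by $\exp(-b''(s/t^{1/m})^{m/(m-1)})$ times a constant.

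Finally I would sum over $j$: the number of relevant balls at ``distance level'' $\ell$ from the segment between $x$ and $y$ grows only polynomially, $\leqC \ell^{D}$ by Lemma \ref{ueberdeckung}(iii), while the exponential factor decays superpolynomially in $\ell$; so $\sum_j$ converges and is absorbed into the constant, possibly after shrinking $b''$. This yields
\begin{align*}
\bigl\|\cf_{B(x,t^{1/m})}(tL)^K e^{-tL}\cf_{B(y,t^{1/m})}\bigr\|_{L^2\to L^2}
\leqC \exp\Biggl(-b\biggl(\frac{d(x,y)}{t^{1/m}}\biggr)^{\frac m{m-1}}\Biggr)\,,
\end{align*}
which is exactly DG$_m$ for the family $\{(tL)^K e^{-tL}:t>0\}$, with constants depending only on $K$, the doubling constant, and the Davies--Gaffney constants of the semigroup. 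I expect the main obstacle to be the bookkeeping in the summation over the covering balls --- making the partition of unity genuinely disjoint, tracking which factor absorbs the decay for each $j$, and checking that the polynomial ball-count times superexponential decay is summable uniformly in $t$ --- rather than any deep analytic difficulty; the use of DG$_m$ for $(tL)^K e^{-\frac t2 L}$ itself is the only place where ``minor modifications'' of the $m=2$ argument in \cite{HLMMY} are needed, and these amount to replacing the Gaussian exponent $2$ by $m/(m-1)$ throughout and invoking Fact \ref{GGEkomplex} in place of the finite-propagation argument.
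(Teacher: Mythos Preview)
Your argument is correct in outline, but it takes a substantial detour around the paper's proof and, in the end, relies on exactly the same mechanism that the paper applies directly.

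The paper's proof is a clean ten-line computation: write
\[
(tL)^K e^{-tL}=t^K\,\frac{(-1)^K K!}{2\pi i}\int_{|z-t|=\eta t} e^{-zL}\,\frac{dz}{(z-t)^{K+1}}
\]
via the Cauchy formula (with $\eta$ chosen so the circle stays in a fixed sector $\Si_\theta$), then apply the complex-time two-ball estimate of Lemma~\ref{GGEkomplexbelRadius} to $e^{-zL}$ on the contour. Since $r_z\cong t^{1/m}$ uniformly along the path, the integrand is bounded by $C\exp(-b'(d(x,y)/t^{1/m})^{m/(m-1)})$, and integrating over a circle of length $2\pi\eta t$ against $|z-t|^{-(K+1)}=(\eta t)^{-(K+1)}$ produces the constant $K!/\eta^K$. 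No covering, no composition, no partition of unity.

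Your route factors $(tL)^K e^{-tL}=\bigl[(tL)^K e^{-\frac t2 L}\bigr]\,e^{-\frac t2 L}$ and inserts a spatial partition between the factors. As you correctly observe, the summation over covering balls forces you to know DG$_m$ for the \emph{first} factor $(tL)^K e^{-\frac t2 L}$ as well (for those $y_j$ close to $y$ but far from $x$), and you then justify this by the contour-integral representation and Fact~\ref{GGEkomplex}. But that parenthetical remark \emph{is} the paper's proof, applied to $(tL)^K e^{-\frac t2 L}$ instead of $(tL)^K e^{-tL}$; once you have it, the entire composition and covering layer is redundant. Nothing is gained: the partition-of-unity bookkeeping you flag as the ``main obstacle'' is simply absent from the direct approach.

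In short, extract your parenthetical and promote it to the whole argument: represent $(tL)^K e^{-tL}$ itself by Cauchy's formula and invoke Lemma~\ref{GGEkomplexbelRadius} on the integrand.
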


\section{Hardy spaces associated with operators}
\label{secHardy}

Quite recently, a theory of Hardy spaces associated with certain operators
was introduced, similar to the way that classical Hardy spaces are
adapted to the Laplacian. We refer to \cite{DL} for a survey on the
recent development and only mention that their origin lies in the
paper \cite{ADM} by P.\ Auscher, X.T.\ Duong, and A.\ McIntosh, who
defined the Hardy space $H^1_L(\R^D)$ associated with an operator
$L$ which has a bounded holomorphic functional calculus on
$L^2(\R^D)$ and for which the semigroup operators have a pointwise
Poisson upper bound. Afterwards, the assumptions on the associated
operator were relaxed. S.\ Hofmann and S.\ Mayboroda (\cite{HM})
defined Hardy spaces associated with second order divergence form
elliptic operators on $\R^D$ with complex coefficients. S.\ Hofmann,
G.Z.\ Lu, D.\ Mitrea, M.\ Mitrea, and L.X.\ Yan (\cite{HLMMY})
developed a theory of Hardy spaces adapted to non-negative,
self-adjoint operators $L$ on $L^2(X)$ which satisfy Davies-Gaffney
estimates in the setting of spaces of homogeneous type. X.T.\ Duong
and J.\ Li (\cite{DL}) considered even non-self-adjoint operators
and introduced Hardy spaces associated with operators which have a
bounded holomorphic functional calculus on $L^2(X)$ and generate an
analytic semigroup on $L^2(X)$ satisfying Davies-Gaffney estimates
of order $2$.

Throughout this section, let $L$ be an injective, non-negative,
self-adjoint operator on $L^2(X)$ which satisfies Davies-Gaffney
estimates DG$_m$ for some $m\geq2$. We summarize the most important
facts about Hardy spaces associated with $L$. For more details and
proofs of the statements, we refer to \cite{HM}, \cite{HMM},
\cite{HLMMY}, \cite{DL}, \cite{Frey-preprint}, \cite{AMR},
\cite{CaoYang}, and \cite{DDY}. The proofs given there carry over
with only minor changes to our more general setting.

\begin{definition}\label{DefHardy}
Let $p\in[1,2]$. Put $\psi_0(z):=ze^{-z}$, $z\in\C$, and consider 
the {\em conical square function}
$$
Sf(x):=\biggl(
\int_0^\infty\!\!\int_{B(x,t)}|\psi_0(t^mL)f(y)|^2\,\frac{d\mu(y)}{|B(x,t)|}\,
\frac{dt}{t}
\biggr)^{1/2}\qquad(f\in L^2(X),x\in X).
$$
The {\em Hardy space $H^p_{L}(X)$ associated with $L$} is defined to
be the completion of
$$
\bigl\{f\in L^2(X)\,:\, Sf\in L^p(X)\bigr\}
$$
with respect to the norm
$$
\|f\|_{H^p_{L}}:=\|S f\|_{L^p}\,.
$$
\end{definition}

The definition of Hardy spaces associated with operators is also
possible for $p\in(0,1)$ or $p\in(2,\infty)$. In addition, other
functions than $\psi_0$ can be considered. More information on this
can be found in the aforementioned literature.

By using Fubini's theorem and the spectral theorem it can be
verified that $H^2_{L}(X)=L^2(X)$ with equivalent norms. Additionally, 
the set $H_L^1(X)\cap L^2(X)$ is dense in $H_L^1(X)$. Note that in the 
special case of $X=\R^D$ and $L=-\Delta$ this definition yields the 
Hardy space $H^p(\R^D)$ as introduced by E.M.\ Stein and G.\ Weiss
(\cite{SW}). Similar to classical Hardy spaces, Hardy spaces 
associated with operators form a complex interpolation scale. This 
can be verified by viewing these spaces via the framework of tent 
spaces and by using the interpolation properties of tent spaces 
(cf.\ \cite[Lemma~4.20]{HMM}).

\begin{fact}\label{HardyInterpol}
Suppose that $1\leq p_0<p<p_1\leq2$ with $1/p=(1-\theta)/p_0+\theta/p_1$
for some $\theta\in(0,1)$. Then it holds
$$
\bigl[H^{p_0}_{L}(X),H^{p_1}_{L}(X)\bigr]_\theta=H^p_{L}(X)\,.
$$
\end{fact}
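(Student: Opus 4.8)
The plan is to identify $H^p_L(X)$ with a retract of the tent space $T^p_2(X\times(0,\infty))$ and then invoke the known complex interpolation of tent spaces. Concretely, I would introduce the tent space $T^p_2$ of functions $F$ on $X\times(0,\infty)$ for which
$$
\|F\|_{T^p_2}:=\Bigl\|\Bigl(\int_0^\infty\!\!\int_{B(\cdot,t)}|F(y,t)|^2\,\frac{d\mu(y)}{|B(\cdot,t)|}\,\frac{dt}{t}\Bigr)^{1/2}\Bigr\|_{L^p}
$$
is finite; with this notation, the conical square function is precisely $Sf(x)=\|Q_Lf\|$-type quantity where $Q_Lf(y,t):=\psi_0(t^mL)f(y)$, so that $\|f\|_{H^p_L}=\|Q_Lf\|_{T^p_2}$ by the very definition of the Hardy space. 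The first step is therefore to set up the operator $Q_L\colon L^2(X)\to T^2_2$ and a suitable ``reconstruction'' operator $\pi_L\colon T^2_2\to L^2(X)$ given on simple tensors by a Calder\'on-type formula $\pi_L(F)=c\int_0^\infty\psi_0(t^mL)^{*}F(\cdot,t)\,\frac{dt}{t}$ (or rather with $\psi_0$ replaced by a companion $\wt\psi_0$ of the same class, chosen so that $\int_0^\infty\psi_0(s)\wt\psi_0(s)\,\frac{ds}{s}=1$), and to record the reproducing identity $\pi_L Q_L=\mathrm{id}$ on $L^2(X)$. This much uses only the functional calculus on $L^2(X)$ and the spectral theorem, hence requires nothing beyond DG$_m$.

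The second step is to prove that $Q_L$ and $\pi_L$ extend to bounded operators $Q_L\colon H^p_L(X)\to T^p_2$ and $\pi_L\colon T^p_2\to H^p_L(X)$ for each $p\in[1,2]$, with the composition $\pi_L Q_L=\mathrm{id}$ on a dense subset. Boundedness of $Q_L$ is immediate from the definition of the norm on $H^p_L$. Boundedness of $\pi_L$ on $T^p_2$ into $H^p_L(X)$ for $p=2$ follows from the square-function estimate on $L^2$; for $p=1$ it follows by the $T^1_2$ atomic decomposition together with Davies-Gaffney estimates, exactly as in the cited references --- this is the point where one uses DG$_m$ and Lemma~\ref{DLProp2.3} to control $\pi_L$ applied to a $T^1$-atom, showing its image is (a fixed multiple of) a molecule in $H^1_L(X)$; the general $p\in(1,2)$ case then comes by interpolation of tent spaces, or can be quoted directly. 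All of this is stated in the excerpt to ``carry over with only minor changes'' from \cite{HMM,HLMMY,DL}, so I would cite those and only indicate the modifications needed for general order $m$.

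Granting these two steps, the conclusion is formal. Since $\pi_L Q_L=\mathrm{id}$, the pair $(Q_L,\pi_L)$ exhibits $H^p_L(X)$ as a \emph{retract} of $T^p_2$, i.e.\ a complemented subspace up to isomorphism, compatibly for all $p\in[1,2]$. Complex interpolation respects retracts: if $\theta\in(0,1)$ and $1/p=(1-\theta)/p_0+\theta/p_1$, then
$$
\bigl[H^{p_0}_L(X),H^{p_1}_L(X)\bigr]_\theta
=\pi_L\Bigl(\bigl[T^{p_0}_2,T^{p_1}_2\bigr]_\theta\Bigr)
=\pi_L\bigl(T^p_2\bigr)=H^p_L(X),
$$
where the middle equality is the known complex interpolation identity for tent spaces (this is exactly \cite[Lemma~4.20]{HMM}, which the excerpt already points to). Writing this out requires only the standard lemma that $[\,\cdot\,]_\theta$ commutes with bounded projections, applied to the projection $Q_L\pi_L$ on $T^p_2$.

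The main obstacle is Step~2, specifically the boundedness $\pi_L\colon T^1_2\to H^1_L(X)$: one must verify that $\pi_L$ maps a $T^1_2$-atom to a fixed multiple of an $H^1_L$-molecule, and this is where the Davies-Gaffney estimates of arbitrary order $m$ (via Lemma~\ref{DLProp2.3}) enter, in bounding $\psi_0(t^mL)^{*}$ between characteristic functions of annuli; for $m>2$ one does not have finite propagation speed, so the decay must be extracted purely from DG$_m$ rather than from a support condition. Once that estimate is in hand, everything else is soft: the reproducing formula, the $T^p_2$ interpolation, and the retract argument are all routine.
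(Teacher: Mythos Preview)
Your proposal is correct and follows exactly the approach the paper indicates: the paper does not give a detailed proof but simply states that the result ``can be verified by viewing these spaces via the framework of tent spaces and by using the interpolation properties of tent spaces (cf.\ \cite[Lemma~4.20]{HMM}).'' Your retract argument via $Q_L$ and $\pi_L$ is precisely the standard way to make this verification, and your discussion of the role of DG$_m$ in showing $\pi_L\colon T^1_2\to H^1_L(X)$ is bounded is the ``minor change'' the paper alludes to for general $m\geq2$.
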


It is well-known that the classical Hardy space $H^1(\R^D)$
possesses an atomic decomposition. This property carries over to
Hardy spaces associated with injective, non-negative, self-adjoint
operators $L$ satisfying Davies-Gaffney estimates of order $2$ (cf.\
\cite[Theorem 4.1]{HLMMY}). Besides the atomic decomposition of tent
spaces, the proof in \cite{HLMMY} relies heavily on the equivalence
between the Davies-Gaffney estimates DG$_2$ for $L$ and the finite
speed propagation property for the corresponding wave equation
$Lu+u_{tt}=0$ (cf.\ e.g.\ \cite[Theorem~3.4]{Phragmen}).
Unfortunately, it is not possible to deduce a result similar to the
finite speed propagation property for operators $L$ that fulfill
DG$_m$ for some $m>2$ and thus it seems not to be clear whether an
atomic decomposition of $H^1_{L}(X)$ for these operators $L$ is
possible. Nevertheless, in the general situation one can decompose
the Hardy space $H^1_{L}(X)$ by considering molecules instead of
atoms.
\begin{definition}\label{molecule}
Let $M\in\N$ and $\eps>0$. A function $a\in L^2(X)$ is said to be an
{\em $(M,\eps,L)$-molecule} if there exist a function
$b\in\dom(L^M)$ and a ball $B\subset X$ with radius $r$ such that
\begin{enumerate}
\item[\em (i)] $a=L^Mb$\,;
\item[\em (ii)] for every $k\in\{0,1,\ldots,M\}$ and $j\in\N_0$, it
holds
\begin{align}\label{mol}
\bigl\|(r^mL)^kb\bigr\|_{L^2(U_j(B))}
\leq
r^{mM}2^{-j\eps}\mu(2^jB)^{-1/2}\,,
\end{align}
where the dyadic annuli $U_j(B)$ are defined by
\begin{align}\label{annuli}
U_0(B):=B
\qquad\mbox{and}\qquad
U_j(B):=2^jB\setminus2^{j-1}B\quad\mbox{for all }j\in\N\,.
\end{align}
\end{enumerate}
In this situation we sometimes refer to $a$ as being an
$(M,\eps,L)$-molecule associated with $B$.
\end{definition}

In the literature (cf.\ e.g.\ \cite{HLMMY}, \cite{DL}, \cite{HMM}) authors
mostly study the case when $m=2$ and typically use the terminology
``$(1,2,M,\eps)$-molecule associated with $L$'' instead of
$(M,\eps,L)$-molecule. Next, we give the definition of the molecular
Hardy spaces associated with $L$ (cf.\ e.g.\ \cite{Frey-preprint},
\cite{CaoYang}).

\begin{definition}
Fix $M\in\N$ and $\eps>0$. Let $f\in L^1(X)$. We call
$f=\sum_{j=0}^\infty\la_jm_j$ a {\em molecular
$(M,\eps,L)$-representation} of $f$ if $(\la_j)_{j\in\N_0}\in\ell^1$
is a numerical sequence, $m_j$ is an
$(M,\eps,L)$-molecule for any $j\in\N_0$, and the sum
$\sum_{j=0}^\infty\la_jm_j$ converges in $L^2(X)$. Define
$$
\H^1_{L,mol,M,\eps}(X):=\bigl\{f\in L^1(X)\,:\,
\mbox{$f$ has a molecular $(M,\eps,L)$-representation}\bigr\}
$$
with the norm given by
\begin{align*}
\|f\|_{H^1_{L,mol,M,\eps}} &:=
\inf\Biggl\{\sum_{j=0}^\infty|\la_j|\,:\,\sum_{j=0}^\infty\la_jm_j
\mbox{ is a molecular $(M,\eps,L)$-representation of $f$}\Biggr\}\,.
\end{align*}
The {\em molecular Hardy space $H^1_{L,mol,M,\eps}(X)$ associated
with $L$} is said to be the completion of $\H^1_{L,mol,M,\eps}(X)$
with respect to the norm $\|\cdot\|_{H^1_{L,mol,M,\eps}}$.
\end{definition}

As a direct consequence of the definition, we note that
$H^1_{L,mol,M_2,\eps}(X)\subset H^1_{L,mol,M_1,\eps}(X)$ for
$\eps>0$ and $M_1,M_2\in\N$ with $M_1\leq M_2$. In addition, the
Hardy space $H^1_{L,mol,M,\eps}(X)$ is contained in $L^1(X)$ because
the $L^1(X)$-norm of $(M,\eps,L)$-molecules is uniformly bounded by
a constant depending only on $\eps$ and the constants in the
doubling condition.

One can show the following characterization. For a proof, we refer
to \cite[Theorem 3.12]{DL} (see also \cite{CaoYang} for $X=\R^D$).
\begin{theorem}\label{sqfct=mol}
Assume that $M\in\N$ with $M>\frac{D}{2m}$ and $\eps\in(0,mM-D/2]$.
Then
$$
H^1_{L,mol,M,\eps}(X)=H^1_{L}(X)
$$
with equivalent norms
$$
\|f\|_{H^1_{L,mol,M,\eps}}\cong\|f\|_{H^1_{L}}\,,
$$
where implicit constants depend only on $\eps$, $M$ or the
constants in the Davies-Gaffney and the doubling condition.

In particular, every function $f\in H_L^1(X)\cap L^2(X)$ admits a
molecular $(M,\eps,L)$-represen\-tation.
\end{theorem}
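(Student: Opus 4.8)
The plan is to prove the two-sided norm equivalence by establishing each inclusion separately. The easier direction is $H^1_{L,mol,M,\eps}(X)\subseteq H^1_{L}(X)$. For this it suffices, by density and the linearity of the square function norm, to show that every $(M,\eps,L)$-molecule $a$ associated with a ball $B$ of radius $r$ satisfies $\|Sa\|_{L^1}\leqC1$ with a constant uniform over all molecules, since then for a molecular representation $f=\sum_j\la_jm_j$ one gets $\|Sf\|_{L^1}\leq\sum_j|\la_j|\,\|Sm_j\|_{L^1}$. To estimate $\|Sa\|_{L^1}$ one splits $X$ into the dilated balls $4B$ and the annuli $2^{j+1}B\setminus2^jB$ for $j\geq2$. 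On $4B$ one uses Cauchy--Schwarz together with the doubling property to pass to the $L^2$-norm, and then the $L^2$-boundedness of $S$ (which follows from the spectral theorem and Fubini, as noted just before Fact \ref{HardyInterpol}) combined with the molecular bound (i) with $k=0$. On the far annuli one must exploit the smoothness encoded in $a=L^Mb$: writing $\psi_0(t^mL)a=\psi_0(t^mL)L^Mb=t^{-mM}(t^mL)^M\psi_0(t^mL)b$ and invoking the Davies--Gaffney estimates for $(t^mL)^{M}e^{-t^mL}$ from Lemma \ref{DLProp2.3}, one obtains off-diagonal decay that, together with the molecular decay $2^{-j\eps}$ in (ii), is summable in $j$ precisely because $\eps\leq mM-D/2$ and $M>D/(2m)$ control the interplay between the polynomial volume growth $2^{jD}$ and the powers of $t$; this is where one integrates in $t$ over $(0,\infty)$ and splits at $t\sim2^jr$.

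The reverse inclusion $H^1_{L}(X)\subseteq H^1_{L,mol,M,\eps}(X)$ is the substantial half, and I expect it to be the main obstacle. The standard route, which I would follow, is via tent spaces: one identifies $f\in H^1_L(X)\cap L^2(X)$ with the function $F(x,t)=\psi_0(t^mL)f(x)$ living in the tent space $T^{1,2}(X\times(0,\infty))$, with $\|F\|_{T^{1,2}}\cong\|f\|_{H^1_L}$ by definition of $S$. One then applies the atomic decomposition of $T^{1,2}$ to write $F=\sum_j\la_jA_j$ with $T^{1,2}$-atoms $A_j$ supported in Carleson boxes over balls $B_j$ and $\sum_j|\la_j|\leqC\|F\|_{T^{1,2}}$. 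The key identity is a Calder\'on reproducing formula of the form $f=c\int_0^\infty\psi(t^mL)\,\psi_0(t^mL)f\,\frac{dt}{t}$ in $L^2(X)$ for a suitable $\psi$ with enough vanishing at $0$ and $\infty$; feeding the atomic decomposition of $F$ into this formula produces $f=\sum_j\la_j m_j$ with $m_j:=c\int_0^\infty\psi(t^mL)A_j(\cdot,t)\,\frac{dt}{t}$. One must then verify that each $m_j$, after normalization, is (a fixed multiple of) an $(M,\eps,L)$-molecule: choosing $\psi(z)=z^M\tilde\psi(z)$ one writes $m_j=L^M b_j$ with $b_j=c\int_0^\infty t^{mM}\tilde\psi(t^mL)(t^{-mM}A_j(\cdot,t))\,\frac{dt}{t}$, and the molecular bounds (ii) on $\|(r^mL)^k b_j\|_{L^2(U_i(B_j))}$ follow from the support of $A_j$ in the Carleson box, the $T^{1,2}$-atom size bound, and the Davies--Gaffney estimates for $(t^mL)^{k+?}e^{-t^mL}$ from Lemma \ref{DLProp2.3}, again with the conditions $M>D/(2m)$ and $\eps\leq mM-D/2$ ensuring convergence of the $t$-integral and the correct decay.

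The delicate points to watch, and where I expect the real work, are: (a) making the Calder\'on reproducing formula converge in the right sense and commute with the infinite sum — one first works with $f\in H^1_L(X)\cap L^2(X)$ where convergence in $L^2(X)$ is available from the spectral theorem and Davies--Gaffney estimates, then passes to the completion; (b) checking that the off-diagonal decay produced by Lemma \ref{DLProp2.3} for $(t^mL)^Ke^{-t^mL}$, when integrated against the tent-space atom over the Carleson region, yields exactly the factor $2^{-i\eps}\mu(2^iB_j)^{-1/2}$ required in \eqref{mol} — this is a careful but routine estimate once the exponents are tracked, and the hypotheses on $M,\eps$ are exactly what is needed; (c) verifying $b_j\in\dom(L^M)$. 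Since all of these steps are present in \cite{DL} (and \cite{CaoYang}) for $m=2$ and, as the paper remarks, carry over with only minor changes to general $m\geq2$ once Lemma \ref{DLProp2.3} is available in that generality, I would organize the proof as a reduction to those references, spelling out only the places where the order $m$ enters. The final assertion — that every $f\in H_L^1(X)\cap L^2(X)$ admits a molecular $(M,\eps,L)$-representation — is then immediate from the construction above, since for such $f$ one genuinely produces the representation rather than merely approximating it.
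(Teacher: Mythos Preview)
Your proposal is correct and aligns with the approach the paper intends: the paper does not prove Theorem~\ref{sqfct=mol} itself but refers to \cite[Theorem~3.12]{DL} (and \cite{CaoYang}), remarking that those arguments carry over to general $m\geq2$ with minor changes, and your sketch is precisely the standard tent-space/Calder\'on-reproducing-formula argument from those references. One small slip: in the easy direction you write ``the molecular bound (i) with $k=0$'' but mean (ii) with $k=M$, since $\|a\|_{L^2(U_j(B))}=r^{-mM}\|(r^mL)^Mb\|_{L^2(U_j(B))}$.
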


A detailed examination of the proof due to X.T.\ Duong and J.\ Li
shows the following
\begin{coro}\label{MolH1}
Let $\eps>0$ and $M\in\N$ with $M>\frac{D}{2m}$. Then every
$(M,\eps,L)$-molecule $a$ belongs to $H^1_{L}(X)$ and there is a
constant $C>0$ depending only on $\eps, M$ and the constants in the
Davies-Gaffney (\ref{DG}) and the doubling condition
(\ref{doublingDim}) such that for all $(M,\eps,L)$-molecules $a$:
$$
\|a\|_{H^1_{L}}\leq C\,.
$$
\end{coro}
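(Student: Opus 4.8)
The plan is to obtain the uniform bound directly from Theorem~\ref{sqfct=mol}, once the decay exponent $\eps$ has been brought into the admissible range, and then to record where the substantive work actually sits. First I would note that the hypothesis $M>\frac{D}{2m}$ makes $\eps':=\min\{\eps,\,mM-D/2\}$ a strictly positive number lying in $(0,mM-D/2]$. Since $2^{-j\eps}\le2^{-j\eps'}$ for all $j\in\N_0$, the estimate \eqref{mol} for an $(M,\eps,L)$-molecule at once yields the same estimate with $\eps'$ in place of $\eps$; hence every $(M,\eps,L)$-molecule is also an $(M,\eps',L)$-molecule, and it suffices to treat the case $\eps\in(0,mM-D/2]$. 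The constant produced this way will depend only on $\eps$ --- or, when the minimum equals $mM-D/2$, only on $m$, $M$, $D$ --- on $M$, and on the constants in \eqref{doublingDim} and \eqref{DG}, which is the asserted dependence.

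With $\eps\in(0,mM-D/2]$ fixed, let $a$ be an $(M,\eps,L)$-molecule. Since, as recorded after the definition of the molecular Hardy space, every molecule belongs to $L^1(X)$ with norm controlled by $\eps$ and the doubling constants, the trivial one-term expansion $a=1\cdot a$ is a molecular $(M,\eps,L)$-representation of $a$ (the series converging in $L^2(X)$ for obvious reasons). Thus $a\in\H^1_{L,mol,M,\eps}(X)$ with $\|a\|_{H^1_{L,mol,M,\eps}}\le1$, and Theorem~\ref{sqfct=mol} then gives $a\in H^1_L(X)$ together with $\|a\|_{H^1_L}\le C\,\|a\|_{H^1_{L,mol,M,\eps}}\le C$, where $C$ has the dependence described above. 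This already settles the corollary; I include the direct argument below because that is where the content lies.

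Indeed, the inclusion $H^1_{L,mol,M,\eps}(X)\subseteq H^1_L(X)$ invoked here is one half of Theorem~\ref{sqfct=mol}, whose proof is adapted from \cite[Theorem~3.12]{DL}, and the assertion of the corollary is exactly the key intermediate estimate in that proof: the uniform bound $\|Sa\|_{L^1}\le C$ for a single molecule $a$ associated with a ball $B=B(x_B,r)$. I would carry this out as follows. Writing $a=L^Mb$ gives the identity $\psi_0(t^mL)a=t^{-mM}(t^mL)^{M+1}e^{-t^mL}b$, and Lemma~\ref{DLProp2.3} supplies Davies--Gaffney estimates DG$_m$ for the family $\{(sL)^{M+1}e^{-sL}:s>0\}$. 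One splits $\int_XSa\,d\mu$ into the contribution of $4B$ and those of the far dyadic annuli $U_j(B)$, $j\ge3$. Over $4B$ one uses the Cauchy--Schwarz inequality, the $L^2$-boundedness of $S$ (a consequence of Fubini's theorem, \eqref{doubl} and the spectral theorem), and the $\ell^1$-summation over $i$ of the bounds \eqref{mol} for $b$, combined with \eqref{doublingDim} and Fact~\ref{BKKugel}. Over a far annulus $U_j(B)$ one combines, for $y\in B(x,t)$ with $x\in U_j(B)$, the Davies--Gaffney decay $\exp(-b(2^jr/t)^{m/(m-1)})$ with the factor $t^{-mM}$ and the annular bounds for $b$; splitting the $dt/t$-integral at $t\sim2^jr$ then yields $\|Sa\|_{L^1(U_j(B))}\lesssim2^{-j\del}$ for some $\del>0$, which is summable in $j$. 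The main obstacle is precisely this far-annulus estimate: one has to play off the volume growth $|U_j(B)|^{1/2}\lesssim2^{jD/2}|B|^{1/2}$ against the decay provided jointly by the Davies--Gaffney kernel bound and by the $t^{mM}$-gain implicit in the displayed identity, and it is at this point that the hypotheses $M>\frac{D}{2m}$ and $\eps>0$ enter.
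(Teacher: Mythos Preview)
Your proposal is correct and aligns with the paper's own justification, which simply records that the corollary follows from a detailed examination of the proof of Theorem~\ref{sqfct=mol}. Your observation that the \emph{statement} of Theorem~\ref{sqfct=mol} already suffices---via the reduction to $\eps'\in(0,mM-D/2]$ and the trivial one-term molecular representation $a=1\cdot a$---is a slightly cleaner packaging than what the paper indicates, and the subsequent sketch of the direct square-function estimate is exactly the content of the cited proof in \cite{DL}.
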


Thanks to $H^1_{L}(X)\subset L^1(X)$ and $H^2_{L}(X)=L^2(X)$, Fact
\ref{HardyInterpol} yields that $H^p_{L}(X)\subset L^p(X)$ for each
$p\in(1,2)$. The question under which assumptions on $L$ the reverse
inclusion holds is settled for the classical Hardy spaces
$H^p(\R^D)$. It is well-known that they can be identified with the
Lebesgue spaces $L^p(\R^D)$ for any $p\in(1,\infty)$ (see e.g.\
\cite[p.\ 220]{Stein}). However, if $L$ is an injective,
non-negative, self-adjoint operator on $L^2(\R^D)$ which satisfies
Davies-Gaffney estimates DG$_{m}$ for some $m\geq2$ and $p\in(1,2)$, 
then $H^p_{L}(\R^D)$ may or may not coincide with $L^p(\R^D)$ (see 
e.g.\ \cite[Proposition~9.1~(v),~(vi)]{HMM}, where Riesz transforms 
were studied).

P.\ Auscher, X.T.\ Duong, and A.\ McIntosh showed in \cite[Theorem
6]{ADM} that pointwise Gaussian estimates (\ref{GE}) imply
$H^p_{L}(\R^D)=L^p(\R^D)$ for all $p\in(1,2]$. By reasoning similar
to P.\ Auscher in \cite[Proposition 6.8]{AuscherOnNec}, who sketched
a proof in the case of second order divergence form operators on
$\R^D$, one can show a corresponding result for operators satisfying
only generalized Gaussian estimates. In the case $m=2$ this is
already stated in \cite[Proposition 9.1 (v)]{HMM}, with a reference to
\cite{AuscherOnNec} for the proof.

\begin{theorem}\label{HpLp}
Assume that $L$ is an injective, non-negative, self-adjoint operator
on $L^2(X)$ which fulfills generalized Gaussian estimates
GGE$_m(p_0,p_0')$ for some $p_0\in[1,2)$ and $m\geq2$. Then, for
each $p\in(p_0,2]$, the Hardy space $H^p_{L}(X)$ and the Lebesgue
space $L^p(X)$ coincide and their norms are equivalent.
\end{theorem}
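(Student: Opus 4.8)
The plan is to prove the two inclusions $H^p_L(X)\subset L^p(X)$ and $L^p(X)\subset H^p_L(X)$ separately for $p\in(p_0,2]$. The inclusion $H^p_L(X)\subset L^p(X)$ together with norm control is already available from the discussion preceding the statement: it follows from $H^1_L(X)\subset L^1(X)$, $H^2_L(X)=L^2(X)$ and the complex interpolation scale of Fact~\ref{HardyInterpol} applied on the Hardy side, combined with the classical Riesz--Thorin interpolation of the identity map on the Lebesgue side. So the real content is the reverse inclusion $L^p(X)\subset H^p_L(X)$, equivalently the square function bound
\begin{align*}
\|Sf\|_{L^p(X)}\leqC\|f\|_{L^p(X)}\qquad(f\in L^2(X)\cap L^p(X))
\end{align*}
for $p\in(p_0,2]$, where $Sf$ is the conical square function with $\psi_0(z)=ze^{-z}$.

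First I would establish this bound at the endpoint $p=2$, where it is immediate: by Fubini, Fact~\ref{BKKugel} (the estimate \eqref{doubl}) and the spectral theorem one gets $\|Sf\|_{L^2}^2\cong\int_0^\infty\|\psi_0(t^mL)f\|_{L^2}^2\,\frac{dt}{t}\leqC\|f\|_{L^2}^2$ since $\int_0^\infty|\psi_0(t^m\la)|^2\,\frac{dt}{t}=\int_0^\infty|s e^{-s}|^2\,\frac{ds}{ms}$ is a finite constant independent of $\la>0$. Next I would upgrade this to a weak-type $(p_0,p_0)$ estimate for the sublinear operator $S$ (or for a vector-valued linearization of it) and then interpolate. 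The natural route is a Calder\'on--Zygmund / good-$\la$ argument adapted to generalized Gaussian estimates: realize $S$ as an $L^2$-valued singular integral operator, $f\mapsto(\psi_0(t^mL)f)_{t>0}$ landing in $L^2((0,\infty)\times X,\frac{d\mu\,dt}{|B(x,t)|\,t})$, and verify the off-diagonal ($L^{p_0}\!\to\! L^2$ averaged) kernel bounds needed for the weak-type criterion of Blunck--Kunstmann. Here GGE$_m(p_0,p_0')$ and Lemma~\ref{DLProp2.3} (Davies--Gaffney for $(tL)^Ke^{-tL}$, hence for $\psi_0(t^mL)$) supply exactly the required decay: one writes $\psi_0(t^mL)=(t^mL)e^{-t^mL}$ and uses the GGE$_m(p_0,2)$ bounds, which follow from GGE$_m(p_0,p_0')$ via Fact~\ref{GGEequiv}~b), to control $\|\cf_{B(x,r)}\psi_0(t^mL)\cf_{A(x,r,k)}\|_{L^{p_0}\to L^2}$ with Gaussian-type decay in $k$. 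The weak-type $(p_0,p_0)$ bound for $S$ then follows from \cite[Theorem~1.1]{BK2}, and Marcinkiewicz interpolation with the $L^2$ bound gives $\|Sf\|_{L^p}\leqC\|f\|_{L^p}$ for all $p\in(p_0,2]$. Combined with density of $L^2(X)\cap L^p(X)$ and the definition of $H^p_L(X)$ as a completion, this yields $L^p(X)\hookrightarrow H^p_L(X)$ with norm control, and together with the first inclusion the two spaces coincide with equivalent norms.

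The main obstacle I expect is verifying the off-diagonal hypotheses of the weak-type criterion with the correct radii: the square function mixes all scales $t>0$, so one must carry out the usual splitting into the "near" part (treated by $L^2$ boundedness and the doubling property) and the "far" annular part, and for the far part control $\int_0^\infty\|\cf_{B}\psi_0(t^mL)\cf_{A(x,r,k)}g\|_{L^2}^2\,\frac{dt}{t}$ uniformly, integrating the Gaussian tail $\exp(-b((2^k r)/t)^{m/(m-1)})$ against $\frac{dt}{t}$ over $t\lesssim r$ and using the polynomial volume growth \eqref{doublingDim} to absorb the volume factors $|B(x,2^k r)|^{1/p_0-1/2}$ into the decay in $k$. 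This is precisely the computation sketched by Auscher in \cite[Proposition~6.8]{AuscherOnNec} and recorded for $m=2$ in \cite[Proposition~9.1~(v)]{HMM}; the point of the present statement is only that replacing Gaussian estimates by generalized Gaussian estimates GGE$_m(p_0,p_0')$ and working for arbitrary $m\geq2$ changes nothing essential, since all the kernel bounds one needs are the $L^{p_0}\!\to\!L^2$ annular estimates provided by Fact~\ref{GGEequiv} and Lemma~\ref{DLProp2.3}. I would therefore present the proof as a reduction to these two facts plus the Blunck--Kunstmann weak-type machinery, indicating the scale-integration estimate in a lemma and deferring the routine bookkeeping.
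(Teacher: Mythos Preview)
Your overall plan matches the paper's: reduce to $\|Sf\|_{L^p}\leqC\|f\|_{L^p}$ for $p\in(p_0,2]$, and obtain this from a weak-type $(p_0,p_0)$ bound plus interpolation with the $L^2$ case. Your observation that the reverse inequality $\|f\|_{L^p}\leqC\|Sf\|_{L^p}$ is already available from the interpolation embedding $H^p_L(X)\subset L^p(X)$ stated just before the theorem is correct and actually bypasses the paper's Steps~2 and~3, where the square-function bound is first pushed to $p\in(2,p_0')$ and then dualized back to recover this inequality.

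The gap is in your proposed linearization. Writing $Sf(x)$ as the norm of $(t,y)\mapsto\psi_0(t^mL)f(y)$ in ``$L^2((0,\infty)\times X,\frac{d\mu\,dt}{|B(x,t)|\,t})$'' does not produce a vector-valued operator to which the Blunck--Kunstmann criterion applies: the conical restriction $y\in B(x,t)$ and the volume weight $|B(x,t)|$ depend on the evaluation point~$x$, so there is no fixed target Hilbert space. This is exactly the ``technical difficulty caused by the definition of $S$ via an area integral'' that the paper singles out. The paper's remedy is to majorize $S$ pointwise by the adapted Littlewood--Paley--Stein function
\[
g^*_\la(f)(x)=\Biggl(\int_0^\infty\!\!\int_X\biggl(\frac{s^{1/m}}{d(x,y)+s^{1/m}}\biggr)^{D\la}|sLe^{-sL}f(y)|^2\,\frac{d\mu(y)}{|B(x,s^{1/m})|}\,\frac{ds}s\Biggr)^{1/2},
\]
which dominates $Sf$ for any $\la>1$ and, because the inner integral runs over all of $X$ with a weight independent of any cut-off in~$x$, is amenable to Fubini-type manipulations; the weak-type $(p_0,p_0)$ bound is then proved for $g^*_\la$ and transferred to $S$. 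Your off-diagonal ingredients (GGE$_m(p_0,2)$ for $e^{-tL}$ via Fact~\ref{GGEequiv}~b), composed with the Davies--Gaffney bounds of Lemma~\ref{DLProp2.3} to handle $(tL)e^{-tL}$) are precisely what one feeds into that argument, so once you replace the direct attack on $S$ by the $g^*_\la$ detour your outline goes through.
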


By density it is enough to establish the estimate
$\|Sf\|_{L^p}\cong\|f\|_{L^p}$ for every $f\in L^p(X)\cap L^2(X)$.
This is divided into three steps. In a first step, which is the 
main work, one verifies that $\|Sf\|_{L^p}\leqC\|f\|_{L^p}$ for all 
$f\in L^p(X)\cap L^2(X)$ and $p\in(p_0,2]$. In a second step 
it is shown that this estimate is actually valid for any 
$p\in(2,p_0')$. In the final step three one can deduce the reverse
inequality $\|f\|_{L^p}\leqC\|S_{\psi}f\|_{L^p}$ for all $f\in L^p(X)\cap 
L^2(X)$ and $p\in(p_0,2]$ by a dualization argument based on 
the bound obtained in the second step.

The idea of the proof of the first step consists in establishing a
weak type $(p_0,p_0)$-estimate for the square function $S$. As
technical difficulties arise, which are caused by the definition of $S$ via an
area integral, one studies the properties of what may be called {\em
Littlewood-Paley-Stein $g^*_{\la}$-function adapted to $L$}
$$
g^*_{\la}(f)(x)
:=
\Biggl(\int_0^\infty\!\! \int_X\biggl(\frac{s^{1/m}}{d(x,y)+s^{1/m}}\biggr)^{D\la}
|sLe^{-sL}f(y)|^2\,\frac{d\mu(y)}{|B(x,s^{1/m})|}\,\frac{ds}{s}\Biggr)^{1/2}
$$
for $\la>0$, $x\in X$, and $f\in L^2(X)$. It turns out that
$g^*_{\la}$ is better suited than $S$ as far as Fubini arguments are
concerned because it contains an integral over the full space.
Since $g^*_{\la}$ controls $S$ for any $\la>1$, it suffices to verify
a weak type $(p_0,p_0)$-estimate for $g_\la$. A detailed proof can be 
found in \cite[Section 4.4]{Uhl}.

\section{Spectral multipliers on the Hardy space
\texorpdfstring{$\smash{H^1_L(X)}$}{H\^{}1(X)}}
\label{SpecMultHardy}

In this section, we formulate and prove Hörmander type spectral
multiplier results on $H^1_L(X)$, where $L$ is an injective, 
non-negative, self-adjoint operator on $L^2(X)$ which
satisfies Davies-Gaffney estimates of arbitrary order $m\geq2$. We
will state three versions, namely a more classical one, presented in
Theorem \ref{DY_Thm1.1}, and two including a Plancherel condition
which leads to weakened regularity assumptions on the involved
function, given in Theorem \ref{Hardy_Plancherel} and Theorem
\ref{DY_Thm1.1_Plancherel}.

\begin{theorem}\label{DY_Thm1.1}
Let $L$ be an injective, non-negative, self-adjoint operator on
$L^2(X)$ satisfying Davies-Gaffney estimates of order $m\geq2$.
\begin{enumerate}
\item[\bf a)]
If $s>(D+1)/2$ and $F\colon[0,\infty)\to\C$ is a bounded Borel 
function with
\begin{align}\label{HC_Bessel}
\sup_{n\in\Z}\|\om F(2^n\cdot)\|_{H_2^s}<\infty\,,
\end{align}
then $F(L)$ can be extended from $H^1_L(X)\cap L^2(X)$ to a bounded 
linear operator on $H^1_L(X)$. More precisely, there exists a 
constant $C>0$ such that
$$
\|F(L)\|_{H^1_L(X)\to H^1_L(X)}\leq C \Bigl(
\sup_{n\in\Z}\|\om F(2^n\cdot)\|_{H_2^s}+|F(0)|\Bigr)\,.
$$

\item[\bf b)]
If $s>D/2$ and $F\colon[0,\infty)\to\C$ is a bounded Borel function 
with
\begin{align}\label{HC_Hoelder}
\sup_{n\in\Z}\|\om F(2^n\cdot)\|_{C^s}<\infty\,,
\end{align}
then $F(L)$ extends to a bounded linear operator on the Hardy space
$H^1_L(X)$. To be more precise, there is a constant $C>0$ such that
\begin{align*}
\|F(L)\|_{H^1_L(X)\to H^1_L(X)}
\leq
C\Bigl(\sup_{n\in\Z}\|\om F(2^n\cdot)\|_{C^s}+|F(0)|\Bigr)\,.
\end{align*}
\end{enumerate}
\end{theorem}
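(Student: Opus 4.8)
The plan is to reduce the boundedness of $F(L)$ on $H^1_L(X)$ to a uniform bound on molecules, and then estimate the action of $F(L)$ on a single molecule using weighted norm estimates. First I would establish a general criterion (this is Theorem~\ref{DY_Thm3.1} as announced in the introduction): if $T$ is linear on $H^1_L(X)\cap L^2(X)$ and there exist $M\in\N$ with $M>D/(2m)$, $\eps>0$ and a constant $C$ such that $\|Ta\|_{H^1_L}\leq C$ for every $(M,\eps,L)$-molecule $a$, then $T$ extends boundedly to $H^1_L(X)$. This follows from Theorem~\ref{sqfct=mol}: every $f\in H^1_L(X)\cap L^2(X)$ has a molecular representation $f=\sum_j\la_j m_j$ with $\sum_j|\la_j|\leqC\|f\|_{H^1_L}$; one applies $T$ termwise (using $L^2$-convergence of the sum plus $L^2$-boundedness of $T$, which for $T=F(L)$ is the spectral theorem), and uses Corollary~\ref{MolH1} together with the assumed molecule bound to control $\|Tf\|_{H^1_L}\leq\sum_j|\la_j|\cdot C$.

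The main work is then the molecule estimate. Fix a large $M$ and decompose $F$ dyadically: write $F(\la)=F(0)+\sum_{n\in\Z}(\om F)(2^{-n}\la)=:F(0)+\sum_n F_n(\la)$, so $F(L)=F(0)\,\mathrm{Id}+\sum_n F_n(L)$ on $H^1_L(X)\cap L^2(X)$. Since $F(0)\,\mathrm{Id}$ is trivially bounded, it remains to bound $\sum_n\|F_n(L)a\|_{H^1_L}$ for a molecule $a$ associated with a ball $B=B(x_B,r)$. To show $F_n(L)a\in H^1_L(X)$ with good control, I would verify directly that $c_n^{-1}F_n(L)a$ is (a fixed multiple of) an $(M,\eps',L)$-molecule for suitable coefficients $c_n$ with $\sum_n|c_n|\leqC\sup_n\|\om F(2^n\cdot)\|_{H^s_2}$, and invoke Corollary~\ref{MolH1}. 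Concretely one writes $F_n(L)a=L^M\big(L^{-M}F_n(L)b\big)\cdot$(normalization) where $a=L^Mb$, sets $G_n(\la):=\la^{-M}F_n(\la)$ (which is still smooth and compactly supported away from $0$), and must estimate, for each $k\in\{0,\dots,M\}$ and each dyadic annulus $U_\ell$ of the relevant ball, the quantity $\|(s^mL)^kG_n(L)b\|_{L^2(U_\ell)}$ where $s\sim 2^{-n/m}$ is the natural scale attached to $F_n$. Splitting $b=\sum_j b\cf_{U_j(B)}$ and using the molecule bounds \eqref{mol} for $b$, this reduces to $L^2\to L^2$ off-diagonal (annular-type) bounds for the operators $(s^mL)^kG_n(L)$ between the annuli $U_j(B)$ and $U_\ell$.

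The heart of the matter — and the step I expect to be the main obstacle — is obtaining these off-diagonal kernel bounds for $G_n(L)$, i.e.\ the \emph{weighted norm estimates that generalize the tools of \cite{DOS}} mentioned in the introduction. For $m=2$ one would use finite speed of propagation for $\cos(t\sqrt L)$ to get compact support of the kernel of $F_n(\sqrt L)$ and hence of $G_n(L)$; for general $m\geq2$ this is unavailable. Instead I would represent $G_n(L)$ via the heat semigroup: using that $\la\mapsto G_n(\la)$ is supported in $\la\sim 2^n$ and has controlled $H^s_2$-norm, write $G_n(L)=\int_\C e^{-zL}\,\eta_n(z)\,dz$ (or a finite-difference / Fourier-type representation adapted to $L$), and then exploit Fact~\ref{GGEkomplex} and Lemma~\ref{GGEkomplexbelRadius} to get Davies-Gaffney-type decay of $\cf_{U_\ell}e^{-zL}\cf_{U_j(B)}$ with a factor $(|z|/\Re z)^{\cdots}$; integrating against $|\eta_n|$ and using $s>(D+1)/2$ to absorb the polynomial losses coming from both the Sobolev embedding $H^s_2\hookrightarrow$ (weighted $L^1$ in the frequency variable) and the $(|z|/\Re z)$ factors yields decay $\leqC\, 2^{-\ell\eps'}$ with summable-in-$j$ and summable-in-$n$ constants. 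The bookkeeping of how the scales $2^{-n/m}$, the molecule radius $r$, and the annular indices $j,\ell$ interact — in particular handling separately the regimes $2^{-n/m}\lesssim r$ and $2^{-n/m}\gtrsim r$ — is the technically delicate part, and is exactly where the gain over \cite{B} (namely needing only $H^s_2$ rather than a weak-type criterion) comes from. Part~b) follows from part~a) by the elementary embedding argument $C^s\hookrightarrow H^{s'}_2$ locally for $s>s'>D/2$ (using that $\om F(2^n\cdot)$ is supported in $(1/4,1)$), so that the $C^s$-hypothesis with $s>D/2$ implies the $H^{s'}_2$-hypothesis for some $s'>D/2$; strictly one should check the endpoint more carefully, but since the conclusion of a) is stated for the open condition $s>(D+1)/2$ one instead applies a) on each dyadic block with a slightly smaller smoothness and sums, or equivalently uses the Plancherel route of Theorem~\ref{Hardy_Plancherel_Introd} with $q=\infty$ as indicated in the introduction.
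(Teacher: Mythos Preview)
Your proposal correctly identifies the main ingredients --- molecular reduction, dyadic decomposition of $F$, and weighted $L^2$-estimates obtained by writing the multiplier through the analytically extended semigroup $e^{-zL}$ --- and for part~b) your second suggestion (the Plancherel route with $q=\infty$) is exactly how the paper proceeds. However, for part~a) your route differs from the paper's in a structurally significant way, and you are vague on precisely the point where the paper does the real work.

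You misstate Theorem~\ref{DY_Thm3.1}. It is \emph{not} the soft criterion ``$\|Ta\|_{H^1_L}\leq C$ for all molecules $\Rightarrow$ $T$ bounded'' (that is a triviality from Theorem~\ref{sqfct=mol}). Its actual hypothesis is the off-diagonal estimate
\[
\bigl\|\cf_{U_j(B)}F(L)(I-e^{-r^mL})^M\cf_B\bigr\|_{L^2\to L^2}\leq C_F\,2^{-j\delta},
\]
and the nontrivial content of its proof is that, starting from a $(2M,\wt\eps,L)$-molecule $a$ and using a representation of the identity built from the operators $P_{m,M,r}(L)$ together with Lemmas~\ref{LemDY(3.1)VarianteLem} and~\ref{intVerReg}, one shows $F(L)a$ is a constant multiple of an $(M,\eps,L)$-molecule \emph{at the same scale $r$}. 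The proof of part~a) then only needs to verify the displayed estimate, which is done via Lemma~\ref{DOS_Lem4_3a}~a) (your Fourier/semigroup representation idea). The key observation there is that the regularizing factor $(1-e^{-(r\la)^m})^M$, evaluated on the support of the $l$-th dyadic piece, contributes $\min\{1,(2^lr)^{mM}\}$; combined with the weighted estimate this gives a bound $\lesssim (2^lr)^{-s'}\min\{1,(2^lr)^{mM}\}\max\{1,(2^lr)^{D/2}\}$, which sums over $l$ precisely because of this factor.

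Your alternative --- drop the regularization, show directly that each $F_n(L)a$ is a multiple of a molecule associated with a ball of radius $s\sim 2^{-n/m}$, and sum the coefficients $c_n$ --- is a genuinely different organization. It may be workable, but the mechanism that replaces the $\min\{1,(2^lr)^{mM}\}$ factor (which in your setup would have to come from rewriting $(s^mL)^kF_n(L)b=(s/r)^{mk}\cdot(r^mL)^kF_n(L)b$ and exploiting the full $(2M,\wt\eps,L)$-structure of $a$) is not spelled out, and changing the reference scale from $r$ to $s$ creates a mismatch when $s\ll r$: the original molecule is spread over $\sim(r/s)^D$ balls of radius $s$, so the decay in the annular index $\ell$ around $\tilde B=B(x_B,s)$ does not start until $2^\ell s\sim r$, forcing $c_n$ to absorb a factor growing in $n$. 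The paper avoids this entirely by never changing the scale and instead pushing the regularization $(I-e^{-r^mL})^M$ into the abstract criterion.

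Finally, your first route to part~b) via the embedding $C^s\hookrightarrow H_2^{s'}$ cannot close the gap: part~a) needs $s'>(D+1)/2$, whereas part~b) assumes only $s>D/2$. Only the Plancherel argument (Theorem~\ref{Hardy_Plancherel} with $q=\infty$, using Lemma~\ref{DOS_Lem2_2}) gives~b).
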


In the special case $m=2$ the statement b) corresponds to
\cite[Theorem~1.1]{DY}.

\begin{theorem}\label{Hardy_Plancherel}
Let $L$ be an injective, non-negative, self-adjoint operator on
$L^2(X)$ for which Davies-Gaffney estimates of order $m\geq2$ hold.
Suppose that there exist $C>0$ and $q\in[2,\infty]$ such that for
any $R>0$, $y\in X$, and any bounded Borel function
$F\colon[0,\infty)\to\C$ with $\supp F\subset[0,R]$
\begin{align}\label{DOS_Plancherel_Hardy}
\bigl\|F(\sqrt[m]L)\,\cf_{B(y,1/R)}\bigr\|_{L^2\to L^2}\leq
C\,\|F(R\cdot)\|_{L^q}\,.
\end{align}
If $s>\max\{D/2,1/q\}$ and $F\colon[0,\infty)\to\C$ is a bounded
Borel function with
\begin{align*}
\sup_{n\in\Z}\|\om F(2^n\cdot)\|_{H_q^s}<\infty\,,
\end{align*}
then there exists a constant $C>0$ such that for all $f\in H^1_L(X)$
\begin{align*}
\|F(L)f\|_{H^1_L}
\leq
C\Bigl(\sup_{n\in\Z}\|\om F(2^n\cdot)\|_{H_q^s}+|F(0)|\Bigr)\|f\|_{H^1_L}\,.
\end{align*}
\end{theorem}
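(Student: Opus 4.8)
The plan is to reduce the boundedness of $F(L)$ on $H^1_L(X)$ to a uniform estimate on molecules, and then to prove the molecular estimate by splitting $F$ dyadically and exploiting the Plancherel hypothesis \eqref{DOS_Plancherel_Hardy} to gain half a derivative over the naive $L^2$-bound. Concretely, by Corollary~\ref{MolH1} and the molecular characterization (Theorem~\ref{sqfct=mol}), it suffices to fix $M\in\N$ with $M>D/(2m)$, $\eps\in(0,mM-D/2]$, and show there is a constant $C$ with $\|F(L)a\|_{H^1_L}\leq C\,(\sup_n\|\om F(2^n\cdot)\|_{H_q^s}+|F(0)|)$ for every $(M,\eps,L)$-molecule $a$ associated with a ball $B=B(x_B,r_B)$; the reduction from molecules to arbitrary $f\in H^1_L\cap L^2$ follows the scheme indicated in the paragraph after Theorem~\ref{mainresH1} (this is presumably made precise in the criterion Theorem~\ref{DY_Thm3.1}). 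Subtracting the constant $F(0)$ (whose contribution is harmless since $F(0)a$ is again a molecule up to a constant multiple), we may assume $F(0)=0$.

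Next I would decompose $F=\sum_{n\in\Z}F_n$ with $F_n=\om(2^{-n}\cdot)F$ dyadically localized at frequency $2^n$, and split the sum at the scale determined by the radius of the molecule: write $F(L)a=\sum_{n:\,2^{n}r_B^m\le 1}F_n(L)a+\sum_{n:\,2^{n}r_B^m>1}F_n(L)a$, i.e.\ low frequencies (relative to the molecule) versus high frequencies. For each piece one has to verify that $F_n(L)a$ is, up to a summable constant, an $(M,\eps',L)$-molecule associated with a suitable dilate of $B$, and then invoke Corollary~\ref{MolH1} again. The key analytic input is a family of weighted $L^2$-estimates for $F_n(\sqrt[m]L)$ of the form
\begin{align*}
\bigl\|\cf_{A(x_B,r,k)}F_n(L)\cf_{B}\bigr\|_{L^2\to L^2}\leq C\,\|F_n(R_n\cdot)\|_{H_q^s}\,N(k)\,\bigl(1+2^{n/m}r_B\bigr)^{-\sigma}
\end{align*}
for appropriate rapidly decaying $N(k)$, $R_n\cong 2^{n/m}$, and a gain exponent $\sigma$ coming from $s-\max\{D/2,1/q\}>0$; here the Plancherel condition \eqref{DOS_Plancherel_Hardy} is what lets one replace the trivial bound $\|F_n(L)\|_{L^2\to L^2}\le\|F_n\|_\infty$ by one in terms of $\|F_n(R_n\cdot)\|_{L^q}$, which after a Sobolev embedding is controlled by $\|\om F(2^n\cdot)\|_{H_q^s}$. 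This is the step where the tools of \cite{DOS} are adapted; the complex time off-diagonal bounds of Lemma~\ref{GGEkomplexbelRadius}, applied to $e^{-zL}$ with $F_n$ represented by its (inverse) Fourier/Mellin transform, provide the annular decay $N(k)$ and compensate for the absence of finite speed propagation — this substitution is exactly what is advertised in the introduction.

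The main obstacle I expect is precisely controlling the molecular bookkeeping after applying $F_n(L)$: one must check that $F_n(L)a=L^M b_n$ for some $b_n=(F_n/\la^M)(L)a$ (using $F(0)=0$ and the smoothness of $F_n$ near $0$ so that $F_n(\la)/\la^M$ is still a nice bounded function), and that the rescaled tail estimates \eqref{mol} hold with the annuli now measured around the enlarged ball $2^{\max(0,-n/m-\log_2 r_B)}B$, with $\eps$ possibly shrunk but still positive. Summing over $n$ then requires that the geometric factors $(1+2^{n/m}r_B)^{-\sigma}$ and $(2^{n/m}r_B)^{mM}$ (from the two regimes) together with $\mu(2^jB)$-volume corrections combine to a convergent series whose sum is $\leqC \sup_n\|\om F(2^n\cdot)\|_{H_q^s}$; the condition $s>\max\{D/2,1/q\}$ is what makes this work, the $D/2$ ensuring convergence against the volume growth and the $1/q$ giving the Sobolev embedding $H_q^s\hookrightarrow L^\infty$ needed to make sense of $F_n(L)$ on $L^2$. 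Once the weighted estimate and the molecular regrouping are in place, the remaining computations — verifying the doubling-type volume inequalities, carrying the constants through Fact~\ref{GGEequiv} and Lemma~\ref{DLProp2.3} to handle $(tL)^K e^{-tL}$ factors — are routine.
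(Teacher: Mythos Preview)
Your high-level plan is sound --- reduce to a uniform $H^1_L$-bound on molecules, decompose $F$ dyadically, and feed the Plancherel hypothesis into a weighted off-diagonal estimate to gain regularity --- and you have correctly identified the key analytic input: the Plancherel condition \eqref{DOS_Plancherel_Hardy} is exactly what upgrades the weighted bound of Lemma~\ref{DOS_Lem4_3a} from part~a) (costing $(s+1)/2$ derivatives) to part~b) (costing only $s/2$), and this is literally the only change the paper makes relative to the proof of Theorem~\ref{DY_Thm1.1}\,a).

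Where your route and the paper's diverge is in the molecular bookkeeping. You propose to show directly that each $F_n(L)a$ is, up to a summable constant, a molecule for a dilate of $B$, and then sum. The paper instead invokes Theorem~\ref{DY_Thm3.1}, whose hypothesis is the single off-diagonal estimate~\eqref{DY(3.1)} for the \emph{regularized} operator $F(L)(I-e^{-r^mL})^M$; all the molecular verification (writing $a=L^{2M}b$, expanding the identity via $P_{m,M,r}(L)$, checking \eqref{mol} on annuli) is done once and for all inside that theorem. The proof of Theorem~\ref{Hardy_Plancherel} then reduces to checking~\eqref{DY(3.1)}, which is done via the dyadic decomposition and the chain \eqref{rwflkareflgfklrefg}--\eqref{rewkjkbewrf}, replacing $\|F_{r,M}^l(2^l\cdot)\|_{H_2^{s+1/2}}$ by $\|F_{r,M}^l(2^l\cdot)\|_{H_q^{s}}$ via Lemma~\ref{DOS_Lem4_3a}\,b). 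The regularizing factor $(1-e^{-(r\la)^m})^M$ is what produces the $\min\{1,(2^lr)^{mM}\}$ in \eqref{rewkjkbewrf} and makes the low-frequency sum converge; in your scheme that role is played by the factor $\la^M$ coming from $a=L^Mb$, which is equivalent but forces you to redo the annular estimates of Lemma~\ref{LemDY(3.1)VarianteLem} and Lemma~\ref{intVerReg} by hand for each $n$. In particular, your displayed estimate with the blanket factor $(1+2^{n/m}r_B)^{-\sigma}$ is not quite right as stated for low frequencies: without the regularization, the bound from Lemma~\ref{DOS_Lem4_3a}\,b) gives $(2^{n/m}r_B)^{-s'}$, which blows up, and you must instead pull out $(2^{n/m}r_B)^{mM}$ from the molecule structure before applying the off-diagonal bound. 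Both routes work, but the paper's is cleaner precisely because it isolates the molecular machinery in Theorem~\ref{DY_Thm3.1} and leaves only a scalar off-diagonal inequality to verify.
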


As already mentioned in the introduction, in general the assertion
of Theorem \ref{Hardy_Plancherel} is false without the Plancherel
condition (\ref{DOS_Plancherel_Hardy}). But
(\ref{DOS_Plancherel_Hardy}) always holds for $q=\infty$, as Lemma
\ref{DOS_Lem2_2} shows (cp.\ \cite[Lemma~2.2]{DOS} for a similar
result). Consequently, Theorem~\ref{DY_Thm1.1}~b) follows from
Theorem~\ref{Hardy_Plancherel}.

On the one hand,
(\ref{DOS_Plancherel_Hardy}) ensures that the class of functions for
which the multiplier result applies is extended. However, on the
other hand, the validity of (\ref{DOS_Plancherel_Hardy}) for some
$q\in[2,\infty)$ entails the emptiness of the point spectrum of $L$.
Indeed, according to the Plancherel condition
(\ref{DOS_Plancherel_Hardy}), one has for all $0\leq a\leq R$ and
$y\in X$
\begin{align*}
\bigl\|\cf_{\{a\}}(\sqrt[m]L)\,\cf_{B(y,1/R)}\bigr\|_{L^{p_0}\to L^2}
\leqC
|B(y,1/R)|^{-(\frac1{p_0}-\frac12)}\|\cf_{\{a\}}(R\cdot)\|_{L^q}
=
0
\end{align*}
and therefore $\cf_{\{a\}}(\sqrt[m]L)=0$. Due to
$\si(L)\subseteq[0,\infty)$, it follows that the point spectrum of
$L$ is empty. In order to treat operators with non-empty point
spectrum as well, one may introduce some variation of the Plancherel
condition (\ref{DOS_Plancherel_Hardy}). This approach originates in
\cite{CoSi} and was also used in \cite{DOS} or \cite{COSY}. For
$N\in\N$, $q\in[1,\infty)$, and a bounded Borel function
$F\colon\R\to\C$ with $\supp F\subseteq[-1,2]$ define the norm
$\|F\|_{N,q}$ via the formula
$$
\|F\|_{N,q}:=
\Biggl(\frac1N\sum_{k=1-N}^{2N}\sup_{\la\in[\frac{k-1}N,\frac
kN)}|F(\la)|^q\Biggr)^{1/q}\,.
$$

\begin{theorem}\label{DY_Thm1.1_Plancherel}
Let $L$ be an injective, non-negative, self-adjoint operator on
$L^2(X)$ satisfying DG$_m$ for some $m\geq2$. Fix $\kappa\in\N$ and
$q\in[2,\infty)$. Suppose that there is $C>0$ such that for any
$N\in\N$, $y\in X$, and any bounded Borel function $F\colon\R\to\C$
with $\supp F\subset[-1,N+1]$
\begin{align*}
\bigl\|F(\sqrt[m]L)\,\cf_{B(y,1/N)}\bigr\|_{L^2\to L^2}
\leq
C\,\|F(N\cdot)\|_{N^\kappa,q}\,.
\end{align*}
In addition, assume that for every $\eps>0$ there is $C>0$ such that
for all $N\in\N$ and all bounded Borel functions $F\colon\R\to\C$
with $\supp F\subset[-1,N+1]$
\begin{align}\label{DOS_Thm(3.5)-Hardy}
\bigl\|F(\sqrt[m]L)\bigr\|_{H^1_L(X)\to H^1_L(X)}^2
\leq
CN^{\kappa D+\eps}\|F(N\cdot)\|^2_{N^\kappa,q}\,.
\end{align}
Let $s>\max\{D/2,1/q\}$. Then, for any bounded Borel function
$F\colon\R\to\C$ with
\begin{align*}
\sup_{n\in\N}\|\om F(2^n\cdot)\|_{H_q^s}<\infty\,,
\end{align*}
there exists a constant $C>0$ such that for all $f\in H^1_L(X)$
\begin{align}\label{DOS_Thm(3.6)-Hardyb}
\|F(L)f\|_{H^1_L}
\leq
C\Bigl(\sup_{n\in\N}\|\om
F(2^n\cdot)\|_{H_q^s}+\|F\|_{L^\infty}\Bigr)\|f\|_{H^1_L}\,.
\end{align}
\end{theorem}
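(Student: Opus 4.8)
The plan is to adapt the argument of \cite[Theorem~3.2]{DOS} to the Hardy space $H^1_L(X)$ and to Davies--Gaffney estimates of arbitrary order $m$. Write $A:=\sup_{n\in\N}\|\om F(2^n\cdot)\|_{H_q^s}$. Since the H\"ormander condition is only assumed over $n\in\N$, I would first split off a low-frequency part: put $\rho:=F\cdot\bigl(1-\sum_{n\geq1}\om(2^{-n}\cdot)\bigr)$, so that $\supp\rho\subset[0,2]$ and $\|\rho\|_{L^\infty}\leqC\|F\|_{L^\infty}$, and for $n\geq1$ put $F_n(\la):=\om(2^{-n}\la)F(\la)=\widetilde F_n(2^{-n}\la)$ with $\widetilde F_n(\mu):=\om(\mu)F(2^n\mu)$ supported in $(1/4,1)$ and $\|\widetilde F_n\|_{H_q^s}\leq A$; then $F=\rho+\sum_{n\geq1}F_n$ pointwise on $(0,\infty)$. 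Injectivity of $L$ ensures that the possible spectral atom at $0$ is irrelevant, and --- in contrast to the situation of Theorem~\ref{Hardy_Plancherel} --- non-zero eigenvalues of $L$ cause no difficulty here precisely because the norms $\|\cdot\|_{N^\kappa,q}$ entering the two hypotheses are built from suprema over short intervals and hence control point masses. The function $\rho$ gives rise to the multiplier $\sigma(\sqrt[m]L)$ with $\sigma(\mu):=\rho(\mu^m)$ supported in $[-1,2]$, so that hypothesis \eqref{DOS_Thm(3.5)-Hardy} applied with $N=1$, together with $\|\sigma\|_{1,q}\leqC\|\rho\|_{L^\infty}$, yields $\|\rho(L)\|_{H^1_L(X)\to H^1_L(X)}\leqC\|F\|_{L^\infty}$.

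For the remaining sum $\sum_{n\geq1}F_n(L)$ the decisive tool is a second decomposition of each $\widetilde F_n$ in the Fourier variable: with a Littlewood--Paley partition $\widehat{\widetilde F_n}=\sum_{l\geq0}\varphi_l\widehat{\widetilde F_n}$ localizing to frequencies $\lesssim2^l$ one writes $\widetilde F_n=\sum_{l\geq0}\widetilde F_n^{(l)}$ and $F_n=\sum_{l\geq0}\widetilde F_n^{(l)}(2^{-n}\cdot)$. Each $\widetilde F_n^{(l)}$ is only essentially --- not exactly --- compactly supported, but outside a fixed neighbourhood of $\supp\om$ it is $O(2^{-lM})$ in every Sobolev norm, so the tails may be discarded and the truncated pieces treated as compactly supported and band-limited. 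The regularity of $F$ enters solely through $\|\widetilde F_n^{(l)}\|_{L^q}\leqC2^{-ls}A$ and, by the Sobolev embedding (valid since $s>1/q$), $\|\widetilde F_n^{(l)}\|_{L^\infty}\leqC2^{-l(s-1/q)}A$. The plan is then, following \cite[Theorem~3.2]{DOS} step by step, to estimate the contribution of $\widetilde F_n^{(l)}(2^{-n}L)$ to $H^1_L(X)$ by transporting it to a multiplier of $\sqrt[m]L$, rescaling so as to fit its support into $[-1,N+1]$ for a suitable $N$, and combining hypothesis \eqref{DOS_Thm(3.5)-Hardy} with the weighted off-diagonal (annular-type) bounds that flow from DG$_m$ together with the Plancherel-type hypothesis; here it is essential that $\kappa\in\N$ makes the resolution $1/N^\kappa$ fine enough and that $q<\infty$. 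The very smooth pieces --- those $l$ below a fixed absolute threshold, assembled over all $n$ --- form a multiplier satisfying a $C^s$-H\"ormander condition and are disposed of at a fixed cost by the estimates underlying Theorem~\ref{DY_Thm1.1}~b).

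Assembling everything gives $\|F(L)f\|_{H^1_L}\leqC\bigl(\|\rho(L)\|_{H^1_L\to H^1_L}+\sum_{n,l}\|\widetilde F_n^{(l)}(2^{-n}L)\|_{H^1_L\to H^1_L}\bigr)\|f\|_{H^1_L}$ for $f\in H^1_L(X)\cap L^2(X)$, and the passage to general $f\in H^1_L(X)$ is by density. The heart of the matter, and the step I expect to be hardest, is making this double series converge to $\leqC(A+\|F\|_{L^\infty})\|f\|_{H^1_L}$: hypothesis \eqref{DOS_Thm(3.5)-Hardy} is lossy, contributing a factor growing like a positive power of the scale, and this loss has to be absorbed --- with the threshold on $s$ kept exactly at $\max\{D/2,1/q\}$ and not worse --- by the gain $2^{-l(s-1/q)}$ from the frequency localization and by the shrinkage of $\|\cdot\|_{N^\kappa,q}$ as the rescaled pieces are pushed towards the bottom of the spectrum; the bookkeeping of the exponents $\kappa$, $D$, $q$, $s$ is carried out as in \cite[Theorem~3.2]{DOS}. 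The genuinely new obstacle compared with the order-$2$ case treated in \cite{DOS} and \cite{DY} is that no finite-propagation-speed shortcut is available: passing between functions of $L$ and of $\sqrt[m]L$ via $\la=\mu^m$ distorts both spatial scales and frequencies by powers of $\mu$, so the rescalings feeding into \eqref{DOS_Thm(3.5)-Hardy} and into the Plancherel-type estimate, as well as the control of the discarded frequency tails, must be performed by hand rather than quoted.
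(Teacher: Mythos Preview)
Your proposal is correct and follows essentially the same route as the paper: both adapt \cite[Theorem~3.2]{DOS} to the $H^1_L$-setting under DG$_m$, splitting off the low-frequency part, handling the smooth (mollified) portion of each dyadic piece via weighted off-diagonal estimates feeding into the criterion of Theorem~\ref{DY_Thm3.1}, and absorbing the rough remainder through hypothesis~\eqref{DOS_Thm(3.5)-Hardy}. The paper in fact omits the details and only records the key lemma --- an $L^2$-version of \cite[Lemma~4.3~b)]{DOS} giving a weighted bound for $(F\ast\xi_{N^{\kappa-1}})(\sqrt[m]L)\,\cf_{B(y,1/N)}$ --- which is precisely the ``weighted off-diagonal (annular-type) bound'' you invoke; your full Littlewood--Paley decomposition in $l$ is just a finer reorganization of the same mollifier-versus-remainder split, and your shortcut of bundling the very smooth pieces and appealing to Theorem~\ref{DY_Thm1.1}~b) is equivalent to running that lemma together with Theorem~\ref{DY_Thm3.1} directly.
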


The proof of Theorem~\ref{DY_Thm1.1_Plancherel} is essentially 
based on the approach 
in \cite[Theorem~3.2]{DOS}. We omit the details here, and only 
mention that one can use the following $L^2$-version of 
\cite[Lemma~4.3~b)]{DOS} which can be proven in a similar way as 
Lemma~\ref{DOS_Lem4_3a} below.

\begin{lemma}
Let $L$, $\kappa$, $q$ be as in Theorem \ref{DY_Thm1.1_Plancherel}.
For $\xi\in C_c^\infty([-1,1])$ and $N\in\N$ define the function $\xi_N$
via the formula $\xi_N(\la):=N\,\xi(N\la)$. Then for any $s\geq2/q$,
$\eps>0$, and any $\xi\in C_c^\infty([-1,1])$ there exists a
constant $C>0$ such that
\begin{align}\label{DOS_Lem(4.3)Hardy}
&\bigl\|(F\ast\xi_{N^{\kappa-1}})(\sqrt[m]L)\,\cf_{B(y,1/N)}
\bigr\|_{L^2(X)\to L^2(X,(1+Nd(\cdot,y))^sd\mu)}
\leq
C\,\|F(N\cdot)\|_{H_q^{s/2+\eps}}
\end{align}
for all $N\in\N$ with $N>8$, all $y\in X$, and all bounded Borel
functions $F\colon\R\to\C$ with $\supp F\subseteq[N/4,N]$ and
$F(N\cdot)\in H_q^{s/2+\eps}$.
\end{lemma}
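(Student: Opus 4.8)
The plan is to mimic the argument of \cite[Lemma~4.3~b)]{DOS}, transporting it to the $L^2\to L^2$-setting by exploiting the two ingredients available here: the Plancherel-type bound assumed in Theorem~\ref{DY_Thm1.1_Plancherel} and the Davies-Gaffney estimates for $L$ (equivalently, their consequences for functions $F(\sqrt[m]L)$ with compactly supported $F$). The left-hand side of \eqref{DOS_Lem(4.3)Hardy} measures the operator norm from $L^2(X)$ into the \emph{weighted} space $L^2(X,(1+Nd(\cdot,y))^sd\mu)$, so the task is to control, for $x$ at distance $\sim j/N$ from $y$, the quantity $\|\cf_{A(y,1/N,j)}(F\ast\xi_{N^{\kappa-1}})(\sqrt[m]L)\cf_{B(y,1/N)}\|_{L^2\to L^2}$ with a bound decaying fast enough in $j$ to absorb the polynomial weight $(1+j)^{s}$ after summing over $j$.

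\textbf{Step 1: reduce to annular estimates.} Decompose $X=\bigcup_{j\ge 0}A(y,1/N,j)$ into dyadic-free annuli around $y$ of width $1/N$ and observe that
$$
\bigl\|(F\ast\xi_{N^{\kappa-1}})(\sqrt[m]L)\cf_{B(y,1/N)}\bigr\|_{L^2\to L^2(X,(1+Nd(\cdot,y))^sd\mu)}^2
\leqC
\sum_{j\ge 0}(1+j)^s\,\bigl\|\cf_{A(y,1/N,j)}(F\ast\xi_{N^{\kappa-1}})(\sqrt[m]L)\cf_{B(y,1/N)}\bigr\|_{L^2\to L^2}^2.
$$
For the ``near'' annuli (say $j\leqC 1$) the Plancherel hypothesis gives directly a bound by $\|(F\ast\xi_{N^{\kappa-1}})(N\cdot)\|_{N^\kappa,q}^2$, and one checks $\|(F\ast\xi_{N^{\kappa-1}})(N\cdot)\|_{N^\kappa,q}\leqC\|F(N\cdot)\|_{H_q^{s/2+\eps}}$ using that convolution with $\xi_{N^{\kappa-1}}$ regularizes and that the $\|\cdot\|_{N^\kappa,q}$-norm is controlled by a Sobolev norm (as in \cite[Lemma~4.2]{DOS}). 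The real content is the decay in $j$ for the ``far'' annuli.

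\textbf{Step 2: exploit the convolution to gain kernel decay.} Write $G:=F\ast\xi_{N^{\kappa-1}}$; since $\xi\in C_c^\infty([-1,1])$, the function $\xi_{N^{\kappa-1}}$ is concentrated at scale $N^{1-\kappa}$, so $G$ is essentially as regular as its Fourier transform is small away from scale $\sim N^{\kappa-1}$. The standard device (as in \cite[p.~461]{DOS}) is to represent $G(\sqrt[m]L)$ via the Fourier inversion formula $G(\sqrt[m]L)=\frac1{2\pi}\int_{\R}\widehat G(\tau)\,e^{i\tau\sqrt[m]L}\,d\tau$ — but since for $m>2$ no finite-speed propagation is available, one instead uses the resolvent/heat representation: choose a holomorphic extension and write $e^{i\tau\sqrt[m]L}$-type pieces through $e^{-zL}$ with $\Re z>0$ and employ Lemma~\ref{GGEkomplexbelRadius}~b) to obtain, on the annulus $A(y,1/N,j)$, a Gaussian-type factor $k^D\exp(-b''(jN^{1-\kappa})^{m/(m-1)}\cdot\ldots)$ together with the polynomial loss $(|z|/\Re z)^{D/2}(1+r/r_z)^{D/2}$. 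Integrating $\widehat G$ against these kernels, the fast decay in $j$ coming from the exponential dominates any polynomial loss, and one is left with a bound of the form $N^{(\kappa-1)\cdot(\text{something})}\|\widehat G\|_{\text{weighted }L^1}$. Converting this back via Plancherel/Sobolev embedding into $\|F(N\cdot)\|_{H_q^{s/2+\eps}}$ — here is where the hypothesis $s\ge 2/q$ and the extra $\eps$ enter, to pass from the $q$-based Sobolev norm of $F(N\cdot)$ (needed for the near-diagonal part) to an $L^2$-based estimate for the convolved function.

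\textbf{The main obstacle} will be Step~2: making the kernel-decay argument work \emph{without} finite speed of propagation for general $m\ge 2$. In the $m=2$ case of \cite{DOS} one gets exact support control of $\cos(t\sqrt L)$; here one must instead run a Phragm\'en–Lindel\"of / complex-time argument (Fact~\ref{GGEkomplex}, Lemma~\ref{GGEkomplexbelRadius}) and carefully track that the polynomial losses $(|z|/\Re z)^{D/2}$ and the factor $k^D$ are defeated by the super-exponential decay $\exp(-b''(kr/r_z)^{m/(m-1)})$ uniformly in $N$ and $j$, and that after summing the weighted series $\sum_j(1+j)^s(\ldots)^2$ one recovers exactly the Sobolev order $s/2+\eps$ claimed. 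This is the technical heart, but it parallels the weighted-norm estimates the authors say they have already prepared (the tools generalizing \cite{DOS}, used to prove Lemma~\ref{DOS_Lem4_3a}), so in the write-up one would cite that lemma's proof and indicate only the modifications needed to replace the target space $L^1\to$(weighted $L^2$) by $L^2\to$(weighted $L^2$) — which is in fact \emph{simpler}, since the Plancherel input is now an $L^2\to L^2$ bound rather than an $L^{p_0}\to L^2$ bound and no volume factors $|B(y,1/N)|^{-(1/p_0-1/2)}$ need to be juggled.
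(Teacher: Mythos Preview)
Your overall strategy --- follow the proof of Lemma~\ref{DOS_Lem4_3a}, replacing condition~\eqref{DOS_Plancherel_Hardy} by the $\|\cdot\|_{N^\kappa,q}$-Plancherel hypothesis of Theorem~\ref{DY_Thm1.1_Plancherel} --- is exactly what the paper intends (it says only that the lemma ``can be proven in a similar way as Lemma~\ref{DOS_Lem4_3a}''). You also correctly note that the $L^2\to L^2$ setting is easier than the $L^{p_0}\to L^2$ setting of \cite{DOS}. However, the detailed steps you outline are organized differently from the paper's argument and obscure the one mechanism that actually delivers the regularity order $s/2+\eps$.

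In the proof of Lemma~\ref{DOS_Lem4_3a} one does \emph{not} begin by decomposing the spectral multiplier itself into annular pieces as in your Step~1. Instead, one writes the multiplier via the heat-group Fourier representation
\[
(F\ast\xi_{N^{\kappa-1}})(\sqrt[m]L)=\frac1{\sqrt{2\pi}}\int_\R\widehat G(\tau)\,e^{-(1-i\tau)N^{-m}L}\,d\tau,
\qquad G(\la):=(F\ast\xi_{N^{\kappa-1}})(N\sqrt[m]\la)\,e^\la,
\]
(note this is \emph{not} the formal expression $e^{i\tau\sqrt[m]L}$ you wrote; the point is precisely to stay in the half-plane $\Re z>0$ where Fact~\ref{GGEkomplex} applies), and then feeds the \emph{weighted} $L^2$-bound for the complex-time semigroup from Lemma~\ref{DOS_Lem4_1} --- which already contains the annular decomposition --- directly into this integral. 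Cauchy--Schwarz gives the weighted operator norm bounded by $\|G\|_{H_2^{(s+1+\eps)/2}}\leqC\|F(N\cdot)\|_{H_q^{(s+1+\eps)/2}}$. This is still $1/2$ too much regularity. The sharp order $s/2+\eps$ is then obtained by \emph{complex interpolation} between this weighted estimate and the unweighted Plancherel-type bound of Theorem~\ref{DY_Thm1.1_Plancherel}; see \eqref{erkhgaejbgke} in the proof of Lemma~\ref{DOS_Lem4_3a}~b). Your outline never makes this interpolation explicit --- the phrase ``converting this back via Plancherel/Sobolev embedding'' is not the right mechanism --- and without it your Steps~1 and~2 would only produce regularity $(s+1)/2+\eps$, which is too weak for the claimed bound.
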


The rest of this section is devoted to the proofs of Theorems
\ref{DY_Thm1.1}~a) and \ref{Hardy_Plancherel}. We start with the 
aforementioned statement concerning the validity of the Plancherel 
condition (\ref{DOS_Plancherel_Hardy}) for $q=\infty$.

\begin{lemma}\label{DOS_Lem2_2}
Let $L$ be a non-negative, self-adjoint operator on $L^2(X)$ which
satisfies DG$_m$ for some $m\geq2$. Then there exists a constant
$C>0$ such that for all $R>0$, $y\in X$, and bounded Borel functions
$F\colon[0,\infty)\to\C$ with $\supp F\subset[0,R]$
$$
\bigl\|F(\sqrt[m]L)\,\cf_{B(y,1/R)}\bigr\|_{L^2\to L^2}
\leq
C\,\|F\|_{L^\infty}\,.
$$
\end{lemma}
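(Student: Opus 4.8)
The plan is to observe that, in the operator-norm formulation stated here, the assertion is an immediate consequence of the spectral theorem, and that neither the Davies-Gaffney hypothesis nor the support restriction on $F$ is actually needed. Since $L$ is non-negative and self-adjoint, so is its $m$-th root $\sqrt[m]L$, and for a bounded Borel function $F$ the operator $F(\sqrt[m]L)$ is defined via the functional calculus (cf.\ \eqref{specthmL2}), with operator norm satisfying $\|F(\sqrt[m]L)\|_{L^2\to L^2}\leq\|F\|_{L^\infty([0,\infty))}$.

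First I would factor $F(\sqrt[m]L)\,\cf_{B(y,1/R)}$ as the composition of the multiplication operator $f\mapsto\cf_{B(y,1/R)}f$, which has operator norm at most $1$ on $L^2(X)$ because $\cf_{B(y,1/R)}$ is pointwise bounded by $1$, followed by $F(\sqrt[m]L)$. Submultiplicativity of the operator norm then yields
$$
\bigl\|F(\sqrt[m]L)\,\cf_{B(y,1/R)}\bigr\|_{L^2\to L^2}
\leq
\bigl\|F(\sqrt[m]L)\bigr\|_{L^2\to L^2}\,\bigl\|\cf_{B(y,1/R)}\bigr\|_{L^2\to L^2}
\leq
\|F\|_{L^\infty}\,,
$$
uniformly in $R>0$ and $y\in X$, so one may in fact take $C=1$. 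This also matches the $q=\infty$ instance of \eqref{DOS_Plancherel_Hardy}, since $\supp F\subset[0,R]$ gives $\|F(R\cdot)\|_{L^\infty}=\|F\|_{L^\infty}$.

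There is essentially no obstacle here, and this is the point worth making: the genuine content of the Plancherel condition \eqref{Plancherel-DOS-Original} lies in estimating the $L^2$-norm of the \emph{integral kernel} of $F(\sqrt[m]L)$ against $\mu(B(y,1/R))^{-1/2}\|F(R\cdot)\|_{L^q}$, and that is where (generalized) Gaussian estimates enter; by contrast \eqref{DOS_Plancherel_Hardy} with $q=\infty$ asks only for the crude bound $\|F\|_{L^\infty}$, which the spectral theorem supplies for free. The hypotheses DG$_m$ and $\supp F\subset[0,R]$ are recorded only for consistency with the statements of Theorems~\ref{Hardy_Plancherel} and \ref{DY_Thm1.1}; they play no role in the argument.
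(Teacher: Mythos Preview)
Your proof is correct and considerably simpler than the paper's. The paper factors $F(\sqrt[m]L)=G_1(L)G_2(L)$ with $G_1(\la)=F(\sqrt[m]\la)e^{\la/R^m}$ and $G_2(\la)=e^{-\la/R^m}$, bounds $\|G_1(L)\|_{L^2\to L^2}\leq e\|F\|_{L^\infty}$ using the support hypothesis $\supp F\subset[0,R]$, and then estimates $\|e^{-R^{-m}L}\cf_{B(y,1/R)}\|_{L^2\to L^2}$ by decomposing into annuli and invoking the Davies-Gaffney bound (Fact~\ref{GGEequiv}). Your observation that the spectral theorem already gives $\|F(\sqrt[m]L)\|_{L^2\to L^2}\leq\|F\|_{L^\infty}$ renders both the factorization and the DG$_m$ hypothesis superfluous, and yields the sharper constant $C=1$. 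The paper's argument is a transplant of the proof of \cite[Lemma~2.2]{DOS}, which in that reference concerns the $L^1\to L^2$ version of the estimate (equivalently, a pointwise kernel bound) where Gaussian estimates are genuinely needed; in the $L^2\to L^2$ formulation of the present lemma that machinery is, as you say, not required.
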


\begin{proof}
Let $R>0$, $y\in X$ and $F\colon[0,\infty)\to\C$ be a bounded Borel
function whose support is contained in $[0,R]$. For any $\la\geq0$
define $G_1(\la):=F(\sqrt[m]\la)\,e^{\la/R^m}$ and
$G_2(\la):=e^{-\la/R^m}$, so that we can write
$F(\sqrt[m]L)=G_1(L)G_2(L)$. Observe that $\supp G_1\subset[0,R^m]$
and thus $\|G_1(L)\|_{L^2\to L^2}\leq\|G_1\|_{L^\infty}\leq
e\,\|F\|_{L^\infty}$. As $L$ fulfills DG$_m$, we deduce with the
help of Fact \ref{GGEequiv} that
\begin{align*}
\bigl\|G_2(L)\,\cf_{B(y,1/R)}\bigr\|_{L^2\to L^2}
&\leq
\sum_{k=0}^\infty\,\bigl\|\cf_{A(y,1/R,k)}e^{-\frac1{R^m}\,L}\cf_{B(y,1/R)}
\bigr\|_{L^2\to L^2}
\leqC
\sum_{k=0}^\infty e^{-bk^{\frac{m}{m-1}}}
\leqC
1\,.
\end{align*}
Combining these estimates gives the desired bound
\begin{align*}
\bigl\|F(\sqrt[m]L)\,\cf_{B(y,1/R)}\bigr\|_{L^2\to L^2}
&\leq
\|G_1(L)\|_{L^2\to L^2}\,\bigl\|G_2(L)\,\cf_{B(y,1/R)}\bigr\|_{L^2\to L^2}
\leqC
\|F\|_{L^\infty}
\,.
\end{align*}
\end{proof}

Now, we provide a criterion for the boundedness of spectral
multipliers on the Hardy space $H^1_L(X)$. Our result, presented in
Theorem \ref{DY_Thm3.1} below, generalizes the statement
\cite[Theorem 3.1]{DY} due to X.T.\ Duong and L.X.\ Yan which merely
works under Davies-Gaffney estimates of order $m=2$. Afterwards we
check that the assumption (\ref{DY(3.1)}) holds whenever the
involved function $F$ satisfies the assumptions of one of the above
theorems.

\begin{theorem}\label{DY_Thm3.1}
Let $L$ be an injective, non-negative, self-adjoint operator on
$L^2(X)$ which satisfies Davies-Gaffney estimates DG$_m$ for some
$m\geq2$. Further, let $F\colon[0,\infty)\to\C$ be a bounded Borel
function. Assume that there exist an integer $M>{D}/{m}$ and
constants $C_F>0$, $\delta>D/2$ such that
\begin{align}\label{DY(3.1)}
\bigl\|\cf_{U_j(B)}F(L)(I-e^{-r^mL})^M\cf_B\bigr\|_{L^2\to L^2}
\leq
C_F\,2^{-j\delta}
\end{align}
for every $j\in\N\setminus\{1\}$ and every ball $B\subset X$ with
radius $r$. As usual, $U_j(B)$ stands for the dyadic annular set as
defined in (\ref{annuli}). Then the operator $F(L)$ extends from
$H_L^1(X)\cap L^2(X)$ to a bounded linear operator on $H_L^1(X)$.
More precisely, there exists a constant $C>0$ such that
\begin{align*}
\|F(L)\|_{H_L^1(X)\to H_L^1(X)}\leq CC_F\,.
\end{align*}
\end{theorem}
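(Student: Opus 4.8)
The strategy is to reduce boundedness of $F(L)$ on $H^1_L(X)$ to a uniform bound on $\|F(L)a\|_{H^1_L}$ for molecules $a$, using the molecular characterization in Theorem~\ref{sqfct=mol} and Corollary~\ref{MolH1}. Fix $\eps\in(0,mM-D/2]$ and let $a$ be an $(M,\eps,L)$-molecule associated with a ball $B$ of radius $r$, so $a=L^Mb$ with the decay estimates \eqref{mol}. Following the idea of \cite{DY}, I would split $F(L)a$ according to the operator identity $I=(I-e^{-r^mL})^M+\sum_{\ell=1}^{M}\binom{M}{\ell}(-1)^{\ell+1}e^{-\ell r^mL}$, writing
\begin{align*}
F(L)a=F(L)(I-e^{-r^mL})^Ma+\sum_{\ell=1}^{M}c_\ell\,F(L)e^{-\ell r^mL}a=:a_1+a_2\,.
\end{align*}

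\textbf{The local part $a_2$.} Here one exploits that $F(L)e^{-\ell r^mL}a=\bigl(F\cdot(\ell r^m\,\cdot)^{-M}e^{-\ell r^m\cdot}\bigr)(L)\,(r^mL)^M b\cdot r^{-mM}\ell^{-M}$, i.e.\ $a_2$ is, up to harmless constants, $G_\ell(L)$ applied to $(r^mL)^Mb$ where $G_\ell(\la)=F(\la)(\ell r^m\la)^{-M}e^{-\ell r^m\la}$ is bounded (using only $F\in L^\infty$ and the fact that $\la\mapsto\la^{-M}e^{-\la}$ times a bounded function is bounded). Combined with Lemma~\ref{DLProp2.3} and the Davies-Gaffney decay of $(tL)^Ke^{-tL}$, the annular estimates on $(r^mL)^Mb$ from \eqref{mol} propagate to give that each $F(L)e^{-\ell r^mL}a$ is (a fixed multiple of) an $(M,\eps',L)$-molecule — or at least a finite sum of such — whose $H^1_L$-norm is $\lesssim C_F$ by Corollary~\ref{MolH1}. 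This part is routine bookkeeping with the off-diagonal bounds.

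\textbf{The main part $a_1$.} This is where hypothesis \eqref{DY(3.1)} enters. Write $a_1=F(L)(I-e^{-r^mL})^Ma$; the idea is to decompose $a_1=\sum_{j\ge0}a_1\cf_{U_j(B)}$ over dyadic annuli and show each piece, suitably renormalized, is an $(\wt M,\wt\eps,L)$-molecule so that $\|a_1\|_{H^1_L}\lesssim\sum_j\|a_1\cf_{U_j(B)}\|_{H^1_L}\cdot(\text{normalization})$ converges. For $j\in\{0,1\}$ one just uses the uniform $L^2$-boundedness of $F(L)$ and $(I-e^{-r^mL})^M$ together with $\|a\|_{L^2(2^jB)}\lesssim|B|^{-1/2}$ and the reverse doubling/volume comparison; for $j\ge2$ the decay $\|\cf_{U_j(B)}F(L)(I-e^{-r^mL})^M\cf_B\|_{L^2\to L^2}\le C_F2^{-j\delta}$ with $\delta>D/2$ is exactly what is needed to beat the volume growth $|2^jB|^{1/2}\lesssim 2^{jD/2}|B|^{1/2}$ and sum the series. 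One must still verify that $F(L)(I-e^{-r^mL})^Ma$ lies in the range of $L^{\wt M}$ with the right bounds; this is handled by noting $(I-e^{-r^mL})^M=r^{mM}L^M\,g(r^mL)$ for a bounded function $g$, so $a_1=L^{M+\wt M}\bigl(r^{mM}(r^mL)^{-\wt M}g(r^mL)F(L)b\bigr)$ and the inner function is bounded on the relevant spectral range; the annular $L^2$-estimates needed for the molecule bound on $(r^mL)^k$ applied to this $b$-object again follow from combining \eqref{DY(3.1)}, \eqref{mol}, and Lemma~\ref{DLProp2.3}.

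\textbf{Expected obstacle.} The main technical difficulty is organizing the molecule-into-molecule argument for $a_1$ cleanly: one needs the annular $L^2$-bounds \eqref{mol} not just for $F(L)(I-e^{-r^mL})^Ma$ itself but for $(r^mL)^k$ applied to its primitive under $L^{\wt M}$, for all $k\le\wt M$, and \eqref{DY(3.1)} as stated only controls $F(L)(I-e^{-r^mL})^M$, not these derivatives. The fix is to absorb a factor $(r^mL)^{\wt M}$-worth of regularity into the semigroup — replacing one copy of $(I-e^{-r^mL})$ by $r^mL\cdot\wt g(r^mL)$ only costs bounded functions — and to apply Lemma~\ref{DLProp2.3} to transfer Davies-Gaffney decay through the extra powers of $L$; keeping track of which power lands where, and ensuring $\delta>D/2$ survives (so one may need $\wt\eps$ slightly smaller than $\delta-D/2$), is the delicate part. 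A secondary subtlety is the density/extension issue: \eqref{DY(3.1)} and the molecular bounds are proved for $a\in H^1_L\cap L^2$, and one concludes boundedness on all of $H^1_L$ by density of $H^1_L\cap L^2$ and the uniform molecular estimate, invoking that every $f\in H^1_L\cap L^2$ has a molecular $(M,\eps,L)$-representation (Theorem~\ref{sqfct=mol}).
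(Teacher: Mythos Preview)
Your overall strategy---reduce to molecules via Theorem~\ref{sqfct=mol} and Corollary~\ref{MolH1}, and show $F(L)$ maps molecules to molecules---matches the paper. But the specific decomposition you propose has a genuine gap.

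\textbf{The problem with $a_2$.} You want to show that $F(L)e^{-\ell r^mL}a$ is (essentially) a molecule by propagating the annular bounds \eqref{mol} on $b$ through Davies--Gaffney decay. But $F(L)$ has \emph{no} off-diagonal decay---only the $L^2$-bound $\|F\|_\infty$. Writing the candidate primitive as $\wt b:=F(L)e^{-\ell r^mL}b$ and trying to bound $\|(r^mL)^k\wt b\|_{L^2(U_j(B))}=\|F(L)\,(r^mL)^ke^{-\ell r^mL}b\|_{L^2(U_j(B))}$, you see that even if $(r^mL)^ke^{-\ell r^mL}b$ is concentrated near $B$ (which DG gives), applying $F(L)$ afterwards destroys that localization. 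The hypothesis \eqref{DY(3.1)} is precisely what supplies off-diagonal decay for $F(L)$, but only when the regularizing factor $(I-e^{-r^mL})^M$ sits between $F(L)$ and the input---and in $a_2$ that factor has been removed.

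\textbf{The related problem with $a_1$.} Your fix for putting $a_1$ into the range of $L^{\wt M}$---writing $(I-e^{-r^mL})^M=r^{mM}L^Mg(r^mL)$---consumes all $M$ copies of $(I-e^{-r^mL})$, so there is no $(I-e^{-r^mL})^M$ left adjacent to $F(L)$, and \eqref{DY(3.1)} can no longer be invoked.

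\textbf{How the paper avoids this.} The paper starts from $(2M,\wt\eps,L)$-molecules (not $M$), so $a=L^{2M}b$ and $F(L)a=L^M(F(L)L^Mb)$ automatically lands in the range of $L^M$---no factors of $(I-e^{-r^mL})$ are spent on this. Then, to get the annular bounds on $(r^mL)^kF(L)L^Mb$, the paper expands the identity as $I=I^M$ via the more elaborate formula
\[
I=m^MP_{m,M,r}(L)^M+\sum_{l=1}^Mr^{-ml}L^{-l}(I-e^{-r^mL})^lP_{m,M,r}(L)^{M-l}\sum_\nu C_{l,\nu,m,M}e^{-\nu r^mL},
\]
where $P_{m,M,r}(L)=r^{-m}\int_r^{\sqrt[m]2r}s^{m-1}(I-e^{-s^mL})^M\,ds$. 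The point is that \emph{every} summand contains at least one factor $(I-e^{-s^mL})^M$ sitting next to $F(L)$, so \eqref{DY(3.1)} (in the annular form of Lemma~\ref{LemDY(3.1)VarianteLem}) can always be applied; and the remaining factors $e^{-\nu r^mL}$, $P_{m,M,r}(L)^K$ all have Davies--Gaffney type decay (Lemmas~\ref{DLProp2.3}, \ref{intVerReg}) and are applied \emph{after} $F(L)(I-e^{-s^mL})^M$, so localization is preserved. Your simpler binomial splitting $I=(I-e^{-r^mL})^M+\sum_\ell c_\ell e^{-\ell r^mL}$ does not achieve this: in the $e^{-\ell r^mL}$ pieces the semigroup acts \emph{before} $F(L)$, which is the wrong order.
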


The strategy of proof consists in reducing the statement to uniform
boundedness of the $H_L^1(X)$-norm of $F(L)a$ for every
$(2M,\wt\eps,L)$-molecule $a$. Recall that $a$ can be rewritten as
$a=L^{2M}b$ for some $b\in\dom(L^{2M})$. By the lack of information
on the support of $L^kb$ for $k\in\{0,1,\ldots,2M\}$, we cannot
apply (\ref{DY(3.1)}) directly. Instead we shall choose $\wt\eps$
large enough and use an estimate of annular type furnished by the
next lemma whose proof is postponed to Section~\ref{proofs}.

\begin{lemma}\label{LemDY(3.1)VarianteLem}
Suppose that the operator $L$ and the function $F$ have the same
properties as in Theorem \ref{DY_Thm3.1}. Then there exists a
constant $C>0$ such that
\begin{align}\label{DY(3.1)Variante}
\bigl\|\cf_{U_j(B)}F(L)(I-e^{-r^mL})^M\cf_{U_i(B)}\bigr\|_{L^2\to L^2}
\leq
CC_F\,2^{iD}\,2^{-|j-i|\delta}
\end{align}
for every $i,j\in\N\setminus\{1\}$ and every ball $B\subset X$
with radius $r$.
\end{lemma}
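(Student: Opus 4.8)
The plan is to derive \eqref{DY(3.1)Variante} from the hypothesis \eqref{DY(3.1)} by a triangle-inequality argument that expands $\cf_{U_i(B)}$ into a sum over balls of radius $\sim 2^ir$ and then uses the off-diagonal decay of $F(L)(I-e^{-r^mL})^M$ encoded in \eqref{DY(3.1)}. First I would fix $i,j\in\N\setminus\{1\}$ and a ball $B=B(x_B,r)$, and cover the annulus $U_i(B)\subseteq 2^iB$ by finitely many balls $B_1,\dots,B_K$ of radius $r$ centered at points of $2^iB$, using Lemma~\ref{ueberdeckung}; this gives $K\leqC 2^{iD}$ balls and bounded overlap. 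Writing $\cf_{U_i(B)}\leq\sum_{k=1}^K\cf_{B_k}$ (pointwise, up to the overlap constant $M$ of the covering), we get
\begin{align*}
\bigl\|\cf_{U_j(B)}F(L)(I-e^{-r^mL})^M\cf_{U_i(B)}\bigr\|_{L^2\to L^2}
\leqC
\sum_{k=1}^K\bigl\|\cf_{U_j(B)}F(L)(I-e^{-r^mL})^M\cf_{B_k}\bigr\|_{L^2\to L^2}\,.
\end{align*}
The point is that each $B_k$ has the \emph{same} radius $r$ as $B$, so \eqref{DY(3.1)} is applicable once we decompose $\cf_{U_j(B)}$ into the dyadic annuli of $B_k$.

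Next I would, for each fixed $k$, estimate $\bigl\|\cf_{U_j(B)}F(L)(I-e^{-r^mL})^M\cf_{B_k}\bigr\|_{L^2\to L^2}$. Since $B_k$ is a ball of radius $r$, hypothesis \eqref{DY(3.1)} applied to $B_k$ gives $\bigl\|\cf_{U_\ell(B_k)}F(L)(I-e^{-r^mL})^M\cf_{B_k}\bigr\|_{L^2\to L^2}\leq C_F2^{-\ell\delta}$ for all $\ell\in\N\setminus\{1\}$ (the missing cases $\ell=0,1$ are handled by $L^2$-boundedness of $F(L)(I-e^{-r^mL})^M$, which costs a bounded constant $\leqC C_F$). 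The geometry is the key bookkeeping: if $U_j(B)$ meets $U_\ell(B_k)$, then a point in this intersection lies at distance $\sim 2^jr$ from $x_B$ and at distance $\sim 2^\ell r$ from the center of $B_k$, while the center of $B_k$ lies in $2^iB$, i.e.\ within distance $\sim 2^ir$ of $x_B$. By the (quasi-)triangle inequality this forces $2^\ell\gtrsim 2^j-2^i$ and also $2^j\leqC 2^\ell+2^i$, so effectively $2^\ell\gtrsim|2^j-2^i|\gtrsim 2^{|j-i|}$ up to bounded factors once $|j-i|$ is large; for small $|j-i|$ the bound $2^{iD}2^{-|j-i|\delta}$ is $\gtrsim 1$ and nothing is to prove. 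Hence $\cf_{U_j(B)}\cf_{B_k}\ne 0$ only when the relevant $\ell$'s satisfy $\ell\gtrsim|j-i|$, and summing the geometric series $\sum_{\ell\gtrsim|j-i|}C_F2^{-\ell\delta}\leqC C_F2^{-|j-i|\delta}$ gives $\bigl\|\cf_{U_j(B)}F(L)(I-e^{-r^mL})^M\cf_{B_k}\bigr\|_{L^2\to L^2}\leqC C_F2^{-|j-i|\delta}$, uniformly in $k$.

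Combining the two steps, $\sum_{k=1}^KC_F2^{-|j-i|\delta}\leqC 2^{iD}C_F2^{-|j-i|\delta}$, which is exactly \eqref{DY(3.1)Variante}. I expect the main obstacle to be making the geometric reduction in the middle step fully rigorous in a space of homogeneous type: one has to be careful that the quasi-triangle inequality (rather than a genuine triangle inequality) still yields clean comparisons between the scales $2^ir$, $2^jr$, $2^\ell r$, and one must track how the implicit constants from Fact~\ref{BKKugel}, the doubling constant, and the covering lemma enter — in particular checking that the exponent $D$ on $2^i$ and the exponent $\delta$ on $2^{-|j-i|}$ survive unchanged (only the multiplicative constant deteriorates). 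A clean way to organize this is to split into the two regimes $j\geq i+c$ and $j<i+c$ (for a fixed structural constant $c$): in the first regime the triangle inequality genuinely separates scales and the geometric sum argument works verbatim, while in the second regime $2^{iD}2^{-|j-i|\delta}\geqC 1$ and the estimate is trivial from $L^2$-boundedness. One also has to absorb the endpoint indices $\ell\in\{0,1\}$ and $j\in\{0,1\}$ (excluded from \eqref{DY(3.1)}) into the bounded-constant part, which is routine since only finitely many such indices occur.
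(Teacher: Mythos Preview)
Your approach is correct and is essentially the same as the paper's: cover $U_i(B)$ by balls of radius $r$, apply \eqref{DY(3.1)} to each, and use the triangle inequality to locate $U_j(B)$ among the dyadic annuli of each small ball. The paper differs only in two technical choices: it uses a continuous integral decomposition (\cite[Lemma~3.4]{BKLeg}) in place of your discrete covering from Lemma~\ref{ueberdeckung}, and it reduces to the single case $j>i+3$ by taking adjoints (self-adjointness of $L$ gives $\|\cf_{U_j}T\cf_{U_i}\|=\|\cf_{U_i}T^*\cf_{U_j}\|$) rather than handling $j>i$ and $j<i$ symmetrically as you do.

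Two small points to tighten. First, your closing organizational split into ``$j\geq i+c$'' versus ``$j<i+c$'' is not the right dichotomy: in the second regime one can still have $j\ll i$ (say $j=2$, $i$ large), and then $2^{iD}2^{-|j-i|\delta}$ need not be $\gtrsim1$ for large $\delta$. The correct split --- which your main argument already uses implicitly --- is $|j-i|\geq c$ versus $|j-i|<c$; alternatively, use the adjoint trick as the paper does. Second, to pass from $\cf_{U_i(B)}\leq\sum_k\cf_{B_k}$ to the sum of operator norms you need a partition of unity subordinate to the covering (and you should discard those $B_k$ not meeting $U_i(B)$, so that their centers sit at distance $\sim2^ir$ from $x_B$; this is what makes your two-sided geometric bound $2^\ell\gtrsim|2^j-2^i|$ valid in both directions).
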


Next, we provide the technical result that an integrated version of
the regularization operator $(I-e^{-r^mL})^M$ satisfies
$L^2(X)$-norm estimates of annular type if $L$ fulfills DG$_m$. This
will be achieved with a similar reasoning as in the proof of the
preceding statement (cf.\ Section \ref{proofs}).

\begin{lemma}\label{intVerReg}
Let $K\in\N$ and $L$ be an injective, non-negative, self-adjoint
operator on $L^2(X)$ which fulfills Davies-Gaffney estimates DG$_m$
for some $m\geq2$. For $M\in\N$ and $r>0$ define the operator
\begin{align}\label{ewrfjhgef}
P_{m,M,r}(L):=r^{-m}\int_r^{\sqrt[m]2r}s^{m-1}(I-e^{-s^mL})^M\,ds\,.
\end{align}
Then there exist $b,C>0$ such that for any $i,j\in\N_0$ and
arbitrary balls $B\subset X$ of radius $r$
\begin{align}\label{intVerRegDG}
\bigl\|\cf_{U_j(B)}P_{m,M,r}(L)^{K}\cf_{U_i(B)}\bigr\|_{L^2\to L^2}
&\leq
C\exp\bigl(-b\,2^{|j-i|}\bigr)\,.
\end{align}
Here, the constants $b,C$ depend exclusively on $m,K,M$ and the
constants appearing in the Davies-Gaffney and doubling condition.
\end{lemma}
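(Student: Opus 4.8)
The plan is to expand the integrated regulariser into a finite combination of heat semigroup operators, collapse the resulting products via $e^{-aL}e^{-bL}=e^{-(a+b)L}$, and then read off the decay from the Davies-Gaffney estimates through a covering argument in which the \emph{outer} annulus is split into only boundedly many balls.

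\emph{Reduction to $|i-j|\ge2$.} Since $L$ is non-negative and self-adjoint, the spectral theorem gives $\|(I-e^{-s^mL})^M\|_{L^2\to L^2}=\sup_{\la\ge0}(1-e^{-s^m\la})^M\le1$, hence $\|P_{m,M,r}(L)\|_{L^2\to L^2}\le r^{-m}\int_r^{\sqrt[m]2r}s^{m-1}\,ds=1/m\le1$ and therefore $\|\cf_{U_j(B)}P_{m,M,r}(L)^K\cf_{U_i(B)}\|_{L^2\to L^2}\le1$ for all $i,j$. As $2^{|i-j|}\le2$ whenever $|i-j|\le1$, this already yields \eqref{intVerRegDG} in that range with a suitable constant. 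Moreover $P_{m,M,r}(L)^K$ is self-adjoint, so $\|\cf_{U_j(B)}P_{m,M,r}(L)^K\cf_{U_i(B)}\|_{L^2\to L^2}=\|\cf_{U_i(B)}P_{m,M,r}(L)^K\cf_{U_j(B)}\|_{L^2\to L^2}$, and we may assume from now on $j\ge i+2$.

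\emph{Expansion.} Writing out the $K$-fold product one obtains
\[
\prod_{\nu=1}^K(I-e^{-s_\nu^mL})^M=\sum_{\vec\ell\in\{0,\ldots,M\}^K}\Bigl(\prod_{\nu=1}^K\binom M{\ell_\nu}(-1)^{\ell_\nu}\Bigr)e^{-T(\vec\ell,\vec s)L},\qquad T(\vec\ell,\vec s):=\sum_{\nu=1}^K\ell_\nu s_\nu^m,
\]
so that, interchanging the finite sum with the Bochner integral in \eqref{ewrfjhgef},
\[
\cf_{U_j(B)}P_{m,M,r}(L)^K\cf_{U_i(B)}=r^{-mK}\sum_{\vec\ell}\Bigl(\prod_\nu\binom M{\ell_\nu}(-1)^{\ell_\nu}\Bigr)\int_{[r,\sqrt[m]2r]^K}\Bigl(\prod_\nu s_\nu^{m-1}\Bigr)\cf_{U_j(B)}e^{-T(\vec\ell,\vec s)L}\cf_{U_i(B)}\,d\vec s .
\]
The term $\vec\ell=0$ contributes $\cf_{U_j(B)}\cf_{U_i(B)}=\cf_{U_j(B)\cap U_i(B)}=0$ since the dyadic annuli are pairwise disjoint; for the remaining terms ($\vec\ell\ne0$) we have $r^m\le T(\vec\ell,\vec s)\le2MK\,r^m$, hence $T(\vec\ell,\vec s)^{1/m}\cong r$ with constants depending only on $m,M,K$. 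For such a term, abbreviating $t:=T(\vec\ell,\vec s)$, DG$_m$ together with Fact~\ref{GGEequiv}~d) (with $p=q=2$, at the scale $t^{1/m}$) gives, for arbitrary balls $B_1,B_2\subset X$, the bound $\|\cf_{B_1}e^{-tL}\cf_{B_2}\|_{L^2\to L^2}\le C\exp(-b(\dist(B_1,B_2)/t^{1/m})^{m/(m-1)})$. By Lemma~\ref{ueberdeckung} the ball $2^jB$, and hence $U_j(B)$, is covered by at most $C\,16^D$ balls of radius $2^jr/16$; discarding those that do not meet $U_j(B)$, each remaining ball $B_\alpha$ has its centre at distance $\ge\frac7{16}2^jr$ from the centre of $B$, whence (using $j\ge i+2$) $\dist(B_\alpha,2^iB)\ge\frac18 2^jr$, and $\cf_{U_i(B)}\le\cf_{2^iB}$. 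Since $t^{1/m}\le(2MK)^{1/m}r$ and $\frac m{m-1}\ge1$, summing over $\alpha$ yields $\|\cf_{U_j(B)}e^{-tL}\cf_{U_i(B)}\|_{L^2\to L^2}\le C'\exp(-b'2^j)\le C'\exp(-b'2^{|i-j|})$, where $2^j=2^{\max(i,j)}\ge2^{|i-j|}$ and $b',C'$ depend only on $m,M,K$ and the doubling and Davies-Gaffney constants.

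\emph{Conclusion and main point.} Inserting this bound into the expansion, using $\|\int\cdot\|\le\int\|\cdot\|$, the identity $\int_{[r,\sqrt[m]2r]^K}\prod_\nu s_\nu^{m-1}\,d\vec s=(r^m/m)^K$, and the fact that the number of multi-indices $\vec\ell$ and the binomial factors are bounded in terms of $M,K$, the powers of $r$ cancel and \eqref{intVerRegDG} follows for $j\ge i+2$; together with the first step this proves the lemma. The only genuine point is the covering: one must cover the \emph{outer} annulus $U_j(B)$ by balls whose radius is a fixed fraction of its size $2^jr$ — then Lemma~\ref{ueberdeckung} produces only $O(1)$ of them, all at distance $\gtrsim2^jr$ from the inner ball $2^iB$, so that no polynomial-in-$j$ loss arises that would later have to be absorbed into the Gaussian factor (covering by balls of radius $r$ would instead force weighing a factor $2^{jD}$ against $\exp(-b'2^j)$, which still works but is less transparent).
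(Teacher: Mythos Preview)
Your proof is correct and takes a genuinely different, more direct route than the paper's.

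The paper first establishes the case $K=1$ by expanding $(I-e^{-s^mL})^M$ binomially and controlling $\|\cf_{U_j(B)}e^{-ks^mL}\cf_{U_i(B)}\|_{L^2\to L^2}$ via an integration/covering argument at scale~$r$ (using \cite[Lemma~3.4]{BKLeg}), which produces sums over annuli $A(z,r,\nu)$ with $\nu$ ranging between $2^{j-1}-2^{i+1}$ and $2^j+2^{i+1}$ and a factor $2^{iD}$ that is then absorbed into the exponential. The passage from $K=1$ to $K=2$ is done by a splitting argument with an intermediate region $G$ halfway between $U_i(B)$ and $U_j(B)$ (adapted from \cite{HM03}); general $K$ then follows by induction.

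You instead expand $P_{m,M,r}(L)^K$ all at once and exploit the semigroup identity $e^{-aL}e^{-bL}=e^{-(a+b)L}$ to collapse each product into a single heat operator $e^{-tL}$ with $t\cong r^m$ (with constants depending only on $m,M,K$). This removes the need for any induction or intermediate splitting. Your covering argument is also cleaner: covering the \emph{outer} annulus $U_j(B)$ by $O(1)$ balls of radius $\sim2^jr$ avoids the polynomial factor $2^{iD}$ altogether and lets you read off the decay directly from Fact~\ref{GGEequiv}~d). Both approaches are valid; yours is shorter and conceptually more transparent, while the paper's intermediate estimate \eqref{lrfrqrf} for $e^{-ks^mL}$ is a by-product that is reused elsewhere in the proof of Theorem~\ref{DY_Thm3.1}.
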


With the preceding lemmas at hand, we are prepared for the proof of
Theorem~\ref{DY_Thm3.1}. Here, we rely to a large extent on the
proof of \cite[Theorem 3.1]{DY}.

\begin{spezbew}{Theorem \ref{DY_Thm3.1}}
Let $F\colon[0,\infty)\to\C$ be a bounded Borel function such that
(\ref{DY(3.1)}) holds for some constants $C_F>0$, $\del>D/2$, and
$M\in\N$ with $M>D/m$.

First of all, we note that the operator $F(L)$ can be defined via
(\ref{specthmL2}) on the set $H_L^1(X)\cap L^2(X)$ which is dense 
in $H_L^1(X)$ (cf.\ Definition~\ref{DefHardy}).

Let $\wt\del\in(D/2,\min\{\del,mM-D/2\})$ be fixed. Define
$\eps:=\wt\delta-D/2>0$ and $\wt\eps:=D+\wt\del$. We claim that, for
every $(2M,\wt\eps,L)$-molecule $a$, $F(L)a$ is, up to
multiplication by a constant independent of $a$, an
$(M,\eps,L)$-molecule. The conclusion of Theorem \ref{DY_Thm3.1} is
then an immediate consequence of Corollary~\ref{MolH1}. Indeed, by
Theorem~\ref{sqfct=mol}, every $f\in H_L^1(X)\cap L^2(X)$ admits a
molecular $(2M,\wt\eps,L)$-represen\-tation, i.e.\ there exist a
scalar sequence $(\la_j)_{j\in\N_0}\in\ell^1$ and a sequence
$(m_j)_{j\in\N_0}$ of $(2M,\wt\eps,L)$-molecules such that
$$
f=\sum_{j=0}^\infty\la_jm_j
$$
in $L^2(X)$ and
$$
\|f\|_{H_L^1}\cong\sum_{j=0}^\infty|\la_j|
$$
with implicit constants independent of $f$. Therefore, we have
$$
\|F(L)f\|_{H^1_L}
\leq
\sum_{j=0}^\infty|\la_j|\,\|F(L)m_j\|_{H^1_L}\,.
$$
But by the claim above, $F(L)m_j$ is a constant multiple of an
$(M,\eps,L)$-molecule. Hence, we conclude from Corollary~\ref{MolH1}
that the $H^1_L(X)$-norm of $F(L)m_j$ is bounded by a constant $C>0$
being independent of $j$. Thus, once the above claim is proved, the
boundedness of $F(L)$ on $H_L^1(X)$ is shown since
$$
\|F(L)f\|_{H^1_L}
\leq
\sum_{j=0}^\infty|\la_j|\,\|F(L)m_j\|_{H^1_L}
\leq
C\sum_{j=0}^\infty|\la_j|
\cong
\|f\|_{H^1_L}
$$
for any $f\in H^1_L(X)\cap L^2(X)$ and $H^1_L(X)\cap L^2(X)$ is
dense in the Hardy space $H^1_L(X)$.

\smallskip
Now we proceed with the proof of the claim stated above. Let $a$ be
an $(2M,\wt\eps,L)$-molecule. According to Definition
\ref{molecule}, we find a function $b\in\dom(L^{2M})$ and a ball
$B\subset X$ such that $a=L^{2M}b$ and (\ref{mol}) hold. By the
spectral theorem for $L$, we may write
$$
F(L)a=L^M\bigl(F(L)L^Mb\bigr)\,.
$$
In particular, $F(L)L^Mb$ belongs to $\dom(L^M)$. For the proof that
$F(L)a$ is a constant multiple of an $(M,\eps,L)$-molecule it
remains to check (ii) from Definition \ref{molecule}, i.e.\ the
existence of a constant $C>0$ such that for all $j\in\N_0$ and all
$k\in\{0,1,\ldots,M\}$
\begin{align}\label{DY(3.4)}
\bigl\|(r^mL)^k\bigl(F(L)L^{M}b\bigr)\bigr\|_{L^2(U_j(B))}
\leq
CC_Fr^{mM}2^{-j\eps}\mu(2^jB)^{-1/2}\,,
\end{align}
where $r$ denotes the radius of the ball $B$.

For $j\in\{0,1,2\}$, we employ the boundedness of $F(L)$ on $L^2(X)$
as well as the properties of the $(2M,\wt\eps,L)$-molecule $a$. For
any $k\in\{0,1,\ldots,M\}$, this leads to
\begin{align}\label{DY(3.34)}
\bigl\|(r^mL)^k\bigl(F(L)L^Mb\bigr)\bigr\|_{L^2(U_j(B))}
&\leq
r^{mk}\|F(L)\|_{L^2\to L^2}\,r^{-m(M+k)}\|(r^mL)^{M+k}b\|_{L^2}\nonumber
\\&\leq
\|F\|_{L^\infty}\,r^{-mM}\sum_{i=0}^\infty\|(r^mL)^{M+k}b\|_{L^2(U_i(B))}\nonumber
\\&\leq
\|F\|_{L^\infty}\,r^{-mM}\sum_{i=0}^\infty
r^{2mM}2^{-i\wt\eps}\mu(2^iB)^{-1/2}\nonumber
\\&\leqC\nonumber
\|F\|_{L^\infty}\,r^{mM}\mu(B)^{-1/2}
\\&\leqC
\bigl(\|F\|_{L^\infty}2^{D+2\eps}\bigr)\,r^{mM}2^{-j\eps}\mu(2^jB)^{-1/2}\,.
\end{align}

Now assume that $j\geq3$. We start by representing the identity on
$L^2(X)$ with the help of the operators $e^{-\nu r^mL}$ and
$P_{m,M,r}(L)$, where the latter has been defined in
(\ref{ewrfjhgef}). Applying this to $(r^mL)^k(F(L)L^Mb)$, the
procedure produces a regularizing effect for the operator $F(L)$ and
finally permits us to insert the assumption (\ref{DY(3.1)}) in the
version of Lemma \ref{LemDY(3.1)VarianteLem} and the Davies-Gaffney
estimates in the form of Lemma \ref{intVerReg}. Inspired by
\cite[(8.7), (8.8)]{HM}, we use the elementary equations
\begin{align*}
1=mr^{-m}\int_r^{\sqrt[m]2r}s^{m-1}\,ds
\end{align*}
and
$$
1=(1-e^{-s^m\la})^M-\sum_{\nu=1}^M\binom M\nu(-1)^\nu e^{-\nu
s^m\la}\qquad(\la\geq0,s>0)
$$
to deduce, via the spectral theorem for $L$,
\begin{align}\label{DY(3.5)}
I
=
mr^{-m}\int_r^{\sqrt[m]2r}s^{m-1}(I-e^{-s^mL})^M\,ds
+
\sum_{\nu=1}^M\nu C_{\nu,M}mr^{-m}\int_r^{\sqrt[m]2r}s^{m-1}e^{-\nu s^mL}\,ds\,,
\end{align}
where $C_{\nu,M}:=\frac{(-1)^{\nu+1}}\nu\binom M\nu$. Further, we
have $\partial_se^{-\nu s^mL}=-\nu ms^{m-1}Le^{-\nu s^mL}$ and
therefore
\begin{align}\label{DY(3.6)}
\nu mL\int_r^{\sqrt[m]2r}s^{m-1}e^{-\nu s^mL}\,ds\nonumber
&=
e^{-\nu r^mL}-e^{-2\nu r^mL}
=
e^{-\nu r^mL}(I-e^{-\nu r^mL})
\\&=
e^{-\nu r^mL}(I-e^{-r^mL})\sum_{\eta=0}^{\nu-1}e^{-\eta r^mL}\,.
\end{align}
By recalling the definition of $P_{m,M,r}(L)$ and by inserting the
equation (\ref{DY(3.6)}) into (\ref{DY(3.5)}), we end up with the
following formula for the identity on $L^2(X)$
\begin{align*}
I
=
mP_{m,M,r}(L)
+
\sum_{\nu=1}^M
C_{\nu,M}r^{-m}L^{-1}(I-e^{-r^mL})\sum_{\eta=\nu}^{2\nu-1}e^{-\eta r^mL}\,.
\end{align*}
Expanding the identity $I^M$ by means of the binomial formula leads
to
\begin{align*}
I&=
\bigl(mP_{m,M,r}(L)\bigr)^M
\\&\quad+
\sum_{l=1}^M\binom Ml\Biggl(\sum_{\nu=1}^MC_{\nu,M}r^{-m}L^{-1}(I-e^{-r^mL})
\sum_{\eta=\nu}^{2\nu-1}e^{-\eta r^mL}\Biggl)^{l}\bigl(mP_{m,M,r}(L)\bigr)^{M-l}
\\&=
m^MP_{m,M,r}(L)^M
+
\sum_{l=1}^Mr^{-ml}L^{-l}(I-e^{-r^mL})^l P_{m,M,r}(L)^{M-l}
\sum_{\nu=1}^{(2M-1)l}C_{l,\nu,m,M}e^{-\nu r^mL}
\end{align*}
for appropriate constants $C_{l,\nu,m,M}$ depending on the
subscripted parameters.

\smallskip
Now fix $k\in\{0,1,\ldots,M\}$. The above identity allows us to
represent $(r^mL)^k(F(L)L^Mb)$ in the following way
\begin{align*}
&(r^mL)^k\bigl(F(L)L^Mb\bigr)=
m^Mr^{mk}P_{m,M,r}(L)^MF(L)(L^{M+k}b)
\\&\qquad+
\sum_{l=1}^Mr^{mk-ml}L^{-l}(I-e^{-r^mL})^l P_{m,M,r}(L)^{M-l}
\sum_{\nu=1}^{(2M-1)l}C_{l,\nu,m,M}e^{-\nu r^mL}F(L)(L^{M+k}b)
\\&\qquad=:
\sum_{l=0}^MG^{(k)}_{l,M,r}\,.
\end{align*}

We shall establish an adequate bound on
$\|G^{(k)}_{l,M,r}\|_{L^2(U_j(B))}$ by distinguishing the three
cases $l=0$, $l\in\{1,\ldots,M-1\}$, and $l=M$.

\smallskip
\noindent{\em Case 1:} $l=0$.

\noindent
First, we write for $\mu$-a.e.\ $x\in X$
\begin{align*}
&\bigl|G^{(k)}_{0,M,r}(x)\bigr|
=
m^Mr^{mk}\bigl|P_{m,M,r}(L)\bigl(P_{m,M,r}(L)^{M-1}F(L)(L^{M+k}b)\bigr)(x)\bigr|
\\&\quad\leq
m^Mr^{mk}r^{-m}\int_r^{\sqrt[m]2r}s^{m-1}\Bigl|P_{m,M,r}(L)^{M-1}
\bigl(F(L)(I-e^{-s^mL})^M(L^{M+k}b)\bigr)(x)\Bigr|\,ds
\\&\quad\leq
\sum_{i=0}^\infty m^Mr^{mk}r^{-m}
~\times\\&\quad\qquad\times~
\int_r^{\sqrt[m]2r}s^{m-1}\Bigl|P_{m,M,r}(L)^{M-1}
\Bigl(\cf_{U_i(B)}\bigl(F(L)(I-e^{-s^mL})^M(L^{M+k}b)\bigr)\Bigr)(x)\Bigr|\,ds\,
.
\end{align*}
As seen in Lemma \ref{intVerReg}, the operator $P_{m,M,r}(L)^{M-1}$
enjoys the off-diagonal estimate (\ref{intVerRegDG}). This yields
\begin{align*}
&\bigl\|G^{(k)}_{0,M,r}\bigr\|_{L^2(U_j(B))}
\\&\leq
m^Mr^{mk}
\sum_{i=0}^\infty r^{-m}~\times
\\&\qquad\times~
\int_r^{\sqrt[m]2r}s^{m-1}\Bigl\|P_{m,M,r}(L)^{M-1}\Bigl(\cf_{U_i(B)}
\bigl(F(L)(I-e^{-s^mL})^M(L^{M+k}b)\bigr)\Bigr)\Bigr\|_{L^2(U_j(B))}\,ds
\\&\leqC
r^{mk}
\sum_{i=0}^\infty\exp\bigl(-b\,2^{|j-i|}\bigr)\,
r^{-m}\int_r^{\sqrt[m]2r}s^{m-1}
\bigl\|F(L)(I-e^{-s^mL})^M(L^{M+k}b)\bigr\|_{L^2(U_i(B))}\,ds\,.
\end{align*}
In order to apply Lemma \ref{LemDY(3.1)VarianteLem}, we first
observe that for every $s\in[r,\sqrt[m]2r]$ the ball $U_0(B)$ is
contained in $U_0(B(x_B,s))$ and the annulus $U_i(B)$ in
$U_{i-1}(B(x_B,s))\cup U_i(B(x_B,s))$ for each $i\in\N$ where $x_B$
denotes the center of $B$. These inclusions give for every
$s\in[r,\sqrt[m]2r]$
\begin{align}\label{rwflrgklfwebfwklf}
&\bigl\|F(L)(I-e^{-s^mL})^M(L^{M+k}b)\bigr\|_{L^2(U_i(B))}\nonumber
\\&\quad\leq\nonumber
\sum_{\nu=i-1}^{i}\bigl\|F(L)(I-e^{-s^mL})^M(L^{M+k}b)\bigr\|_{L^2(U_\nu(B(x_B,
s)))}
\\&\quad\leq\nonumber
\sum_{\nu=i-1}^{i}\biggl(\bigl\|F(L)(I-e^{-s^mL})^M(
\cf_{B(x_B,s)}L^{M+k}b)\bigr\|_{L^2(U_\nu(B(x_B,s)))}
\\&\quad\qquad\qquad+\sum_{\eta=1}^\infty
\bigl\|F(L)(I-e^{-s^mL})^M(\cf_{U_\eta(B(x_B,s))}L^{M+k}b)\bigr\|_{
L^2(U_\nu(B(x_B,s)))}\biggr)\,.
\end{align}
Due to (\ref{DY(3.1)}), the first summand in the bracket is bounded
by
\begin{align*}
C_F2^{-\nu\del}\|L^{M+k}b\|_{L^2(B(x_B,s))}
\leq
C_F2^{-\nu\del}\bigl(\|L^{M+k}b\|_{L^2(B)}+\|L^{M+k}b\|_{L^2(U_1(B))}\bigr)\,.
\end{align*}
By recalling the properties of the $(2M,\wt\eps,L)$-molecule $a$, we
obtain
\begin{align*}
\|L^{M+k}b\|_{L^2(B)}
&=
r^{-m(M+k)}\|(r^mL)^{M+k}b\|_{L^2(B)}
\\&
\leq
r^{-m(M+k)}r^{2mM}\mu(B)^{-1/2}
=
r^{mM-mk}\mu(B)^{-1/2}
\end{align*}
as well as
\begin{align*}
\|L^{M+k}b\|_{L^2(U_1(B))}
&=
r^{-m(M+k)}\|(r^mL)^{M+k}b\|_{L^2(U_1(B))}
\\&\leq
r^{-m(M+k)}r^{2mM}2^{-\wt\eps}\mu(2B)^{-1/2}
\leq
r^{mM-mk}\mu(B)^{-1/2}\,.
\end{align*}
Hence, we have the bound
\begin{align}\label{rwkftkwef} 
\bigl\|F(L)(I-e^{-s^mL})^M(\cf_{B(x_B,s)}L^{M+k}b)\bigr\|_{L^2(U_\nu(B(x_B,s)))}
\leqC
C_Fr^{mM-mk}2^{-\nu\del}\mu(B)^{-1/2}\,.
\end{align}
The series in the bracket of (\ref{rwflrgklfwebfwklf}) can be
estimated with the help of Lemma \ref{LemDY(3.1)VarianteLem}
\begin{align*}
&\sum_{\eta=1}^\infty
\bigl\|F(L)(I-e^{-s^mL})^M(\cf_{U_\eta(B(x_B,s))}L^{M+k}b)\bigr\|_{
L^2(U_\nu(B(x_B,s)))}
\\&\quad\leqC
\sum_{\eta=1}^\infty C_F2^{\eta
D}2^{-|\nu-\eta|\del}\|L^{M+k}b\|_{L^2(U_\eta(B(x_B,s)))}\,.
\end{align*}
Since $a$ is an $(2M,\wt\eps,L)$-molecule, we obtain
\begin{align*}
&\|L^{M+k}b\|_{L^2(U_\eta(B(x_B,s)))}
\\&\quad\leq
r^{-m(M+k)}\bigl(\|(r^mL)^{M+k}b\|_{L^2(U_\eta(B(x_B,r)))}+
\|(r^mL)^{M+k}b\|_{L^2(U_{\eta+1}(B(x_B,r)))}\bigr)
\\&\quad\leq
r^{-m(M+k)}\bigl(r^{2mM}2^{-\eta\wt\eps}\mu(2^\eta B(x_B,r))^{-1/2}+
r^{2mM}2^{-(\eta+1)\wt\eps}\mu(2^{\eta+1}B(x_B,r))^{-1/2}\bigr)
\\&\quad\leqC
r^{mM-mk}2^{-\eta\wt\eps}\mu(B(x_B,r))^{-1/2}
\end{align*}
and thus
\begin{align}\label{wrfkjfwkbfwfln}
&\sum_{\eta=1}^\infty\nonumber
\bigl\|F(L)(I-e^{-s^mL})^M(\cf_{U_\eta(B(x_B,s))}L^{M+k}b)\bigr\|_{
L^2(U_\nu(B(x_B,s)))}
\\&\quad\leqC\nonumber
C_Fr^{mM-mk}\mu(B(x_B,r))^{-1/2}\sum_{\eta=1}^\infty
2^{-\eta(\wt\eps-D)}2^{-|\nu-\eta|\del}
\\&\quad\leqC
C_Fr^{mM-mk}2^{-\nu\wt\delta}\mu(B(x_B,r))^{-1/2}\,.
\end{align}
In the last step we used the fact that
\begin{align*}
\sum_{\eta=1}^\infty2^{-\eta(\wt\eps-D)}2^{-|\nu-\eta|\del}
&=
2^{-\nu(\wt\eps-D)}\Biggl(\sum_{n=-\infty}^{0}2^{n(\wt\eps-D)}2^{-|n|\delta}
+
\sum_{n=1}^{\nu-1}2^{n(\wt\eps-D)}2^{-n\delta}\Biggr)
\\&\leq
2^{-\nu\wt\delta}\Biggl(\sum_{n=-\infty}^{0}2^{-|n|\delta}
+
\sum_{n=1}^\infty2^{-n(D+\delta-\wt\eps)}\Biggr)
\leqC
2^{-\nu\wt\delta}\,.
\end{align*}
In view of the inequalities (\ref{rwkftkwef}) and
(\ref{wrfkjfwkbfwfln}), we have the following estimate of
(\ref{rwflrgklfwebfwklf})
\begin{align*}
&\bigl\|F(L)(I-e^{-s^mL})^M(L^{M+k}b)\bigr\|_{L^2(U_i(B))}
\leqC C_Fr^{mM-mk}2^{-i\wt\del}\mu(B)^{-1/2}\,.
\end{align*}
With the help of this bound and the doubling property,
we continue
\begin{align}\label{DY(3.9)}
&\bigl\|G^{(k)}_{0,M,r}\bigr\|_{L^2(U_j(B))}\nonumber
\\&\quad\leqC\nonumber
r^{mk}\sum_{i=0}^\infty
\exp\bigl(-b\,2^{|j-i|}\bigr)\,
r^{-m}\int_r^{\sqrt[m]2r}s^{m-1}
\bigl\|F(L)(I-e^{-s^mL})^M(L^{M+k}b)\bigr\|_{L^2(U_i(B))}\,ds
\\&\quad\leqC\nonumber
r^{mk}
\sum_{i=0}^\infty
\exp\bigl(-b\,2^{|j-i|}\bigr)\,
r^{-m}\int_r^{\sqrt[m]2r}s^{m-1}\,ds\,\,
C_Fr^{mM-mk}2^{-i\wt\delta}\mu(B)^{-1/2}
\\&\quad\leqC
C_Fr^{mM}2^{-j\wt\delta}\mu(B)^{-1/2}
\leqC
C_Fr^{mM}2^{-j(\wt\delta-D/2)}\mu(2^jB)^{-1/2}\,.
\end{align}
In the second to the last step we used, among other things, the
following fact which is easily verified by an index shift
\begin{align}\label{rwlkfreflkregflkbe}
\sum_{i=0}^\infty
\exp\bigl(-b\,2^{|j-i|}\bigr)\,2^{-i\wt\delta}\nonumber
&=\nonumber
\sum_{n=-\infty}^0
\exp\bigl(-b\,2^{|n|}\bigr)\,2^{-(j-n)\wt\delta}
+
\sum_{n=1}^j
\exp\bigl(-b\,2^{|n|}\bigr)\,2^{-(j-n)\wt\delta}
\\&\quad\leq
2^{-j\wt\delta}\sum_{n=-\infty}^\infty
\exp\bigl(-b\,2^{|n|}\bigr)\,2^{n\wt\delta}
\leqC
2^{-j\wt\delta}\,.
\end{align}

\smallskip
\noindent{\em Case 2:} $l\in\{1,2,\ldots,M-1\}$.

\noindent We have for $\mu$-a.e.\ $x\in X$
\begin{align*}
\bigl|G^{(k)}_{l,M,r}(x)\bigr|&
\leq
r^{m(k-l)}\sum_{\nu=1}^{(2M-1)l}|C_{l,\nu,m,M}|
\int_r^{\sqrt[m]2r}\Bigl(\frac sr\Bigr)^m
\Bigl|L^{M-l}e^{-\nu r^mL}(I-e^{-r^mL})^l
~\circ\\&\qquad
\circ~P_{m,M,r}(L)^{M-l-1}\bigl(F(L)(I-e^{-s^mL})^M(L^{k}b)\bigr)(x)\Bigr|\,
\frac{ds}s
\\&\leqC
r^{m(k-M)}\sum_{\nu=1}^{(2M-1)l}
\sum_{i=0}^\infty
\int_r^{\sqrt[m]2r}\Bigl|
(r^mL)^{M-l}
e^{-\nu r^mL}(I-e^{-r^mL})^l
~\circ\\&\qquad
\circ~P_{m,M,r}(L)^{M-l-1}\Bigl(\cf_{U_i(B)}\bigl(F(L)(I-e^{-s^mL})^M(L^{k}
b)\bigr)
\Bigr)(x)\Bigr|\,\frac{ds}s\,.
\end{align*}
By Lemma \ref{DLProp2.3}, the operator family $\{(tL)^{M-l}e^{-\nu
tL}:\,t>0\}$ satisfies DG$_m$. After writing $(I-e^{-tL})^l$ with
the help of the binomial formula, it is straightforward to prove
that DG$_m$ also holds for $\{(tL)^{M-l}e^{-\nu
tL}(I-e^{-tL})^l:\,t>0\}$. Hence, one can show $L^2(X)$-norm
estimates of annular type similar to those in (\ref{lrfrqrf}) below 
for operators of the form $(r^mL)^{M-l}e^{-\nu r^mL}(I-e^{-r^mL})^l$
whenever $r$ denotes the radius of the considered ball. Thanks to
Lemma \ref{intVerReg}, $P_{m,M,r}(L)^{M-l-1}$ fulfills
(\ref{intVerRegDG}). If one adapts the arguments given at the end of
the proof of Lemma \ref{intVerReg} (cf.\ Section \ref{proofs}), one
can verify that the composition of these operators enjoys the
following version of (\ref{intVerRegDG})
\begin{align*}
\bigl\|\cf_{U_j(B)}
(r^mL)^{M-l}e^{-\nu r^mL}(I-e^{-r^mL})^lP_{m,M,r}(L)^{M-l-1}
\cf_{U_i(B)}\bigr\|_{2\to2}
&\leq
C \exp\bigl(-b\,2^{|j-i|}\bigr)
\end{align*}
for some constants $b,C>0$ depending only on $m,K,M$ and the
constants in the Davies-Gaffney and the doubling condition.

This estimate leads to
\begin{align}\label{case1.2}
\bigl\|G^{(k)}_{l,M,r}\bigr\|_{L^2(U_j(B))}\nonumber
&\leqC
r^{m(k-M)}\sum_{\nu=1}^{(2M-1)l}\sum_{i=0}^\infty
\exp\bigl(-b\,2^{|j-i|}\bigr)
~\times\\&\qquad\times~
\int_r^{\sqrt[m]2r}
\bigl\|F(L)(I-e^{-s^mL})^M(L^{k}b)\bigr\|_{L^2(U_i(B))}\frac{ds}s\,.
\end{align}
By employing similar arguments as in Case 1 (just replace $L^{M+k}b$
by $L^{k}b$), we conclude that for any $i\in\N_0$ and
$s\in[r,\sqrt[m]2r]$
\begin{align}\label{erklngea}
\bigl\|F(L)(I-e^{-s^mL})^M(L^{k}b)\bigr\|_{L^2(U_i(B))}
&\leqC
C_Fr^{2mM-mk}2^{-i\wt\del}\mu(B)^{-1/2}\,.
\end{align}
Inserting this estimate into (\ref{case1.2}) yields readily
\begin{align}\label{DYcase1.3}
\bigl\|G^{(k)}_{l,M,r}\bigr\|_{L^2(U_j(B))}
&\leqC
C_Fr^{mM}2^{-j(\wt\delta-D/2)}\mu(2^jB)^{-1/2}\,.
\end{align}

\smallskip
\noindent{\em Case 3:} $l=M$.

\noindent
In this case we have
\begin{align*}
G^{(k)}_{M,M,r}
&=
r^{m(k-M)}\sum_{\nu=1}^{(2M-1)M}C_{M,\nu,m,M}
e^{-\nu r^mL}\bigl(F(L)(I-e^{-r^mL})^M(L^{k}b)\bigr)
\\&=
r^{m(k-M)}\sum_{\nu=1}^{(2M-1)M}C_{M,\nu,m,M}
\sum_{i=0}^\infty e^{-\nu
r^mL}\Bigl(\cf_{U_i(B)}\bigl(F(L)(I-e^{-r^mL})^M(L^{k}b)\bigl)\Bigl)\,.
\end{align*}
With the help of (\ref{lrfrqrf}), (\ref{rgfrejfhlreb}) below,
and (\ref{erklngea}), (\ref{rwlkfreflkregflkbe}), we obtain
\begin{align*}
\bigl\|G^{(k)}_{M,M,r}\bigr\|_{L^2(U_j(B))}
&\leqC
r^{m(k-M)}\sum_{i=0}^\infty
\exp\bigl(-b\,2^{|j-i|}\bigr)
\bigl\|F(L)(I-e^{-r^mL})^M(L^{k}b)\bigr\|_{L^2(U_i(B))}
\\&\leqC
C_Fr^{mM}\sum_{i=0}^\infty\exp\bigl(-b\,2^{|j-i|}\bigr)\,2^{-i\wt\delta}\mu(B)^{
-1/2}
\\&\leqC
C_Fr^{mM}2^{-j\wt\delta}\mu(B)^{-1/2}
\\&\leqC
C_Fr^{mM}2^{-j(\wt\del-D/2)}\mu(2^jB)^{-1/2}\,.
\end{align*}

\smallskip
This, in combination with (\ref{DY(3.34)}), (\ref{DY(3.9)}), and
(\ref{DYcase1.3}), gives the desired estimate (\ref{DY(3.4)}).
\end{spezbew}

We prepare the proof
of Theorem \ref{DY_Thm1.1}~a) with the next two lemmas. The first
one corresponds to \cite[Lemma 4.1]{DOS} and gives an extension of
generalized Gaussian estimates from real times to complex times in
some weighted space. This is crucial for our proof of Lemma
\ref{DOS_Lem4_3a}, where the operator $F(\sqrt[m]L)$ will be
represented in terms of the extended semigroup $(e^{-zL})_{\Re z>0}$
by a Fourier transform argument taken from \cite{DOS}.

\begin{lemma}\label{DOS_Lem4_1}
Let $s\geq0$. In the situation of Theorem \ref{DY_Thm1.1}~a), there
exists a constant $C>0$ such that for all $R>0$, $\tau\in\R$, and
$y\in X$
$$
\bigl\|e^{-(1+i\tau)R^{-m}L}\cf_{B(y,1/R)}\bigr\|_{L^2(X)\to
L^2(X,(1+Rd(\cdot,y))^sd\mu)}
\leq C
(1+\tau^2)^{s/4}\,.
$$
\end{lemma}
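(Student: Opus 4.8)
The plan is to write $z:=(1+i\tau)R^{-m}$, so that $\Re z=R^{-m}>0$ and $e^{-zL}$ is a contraction on $L^2(X)$ by the spectral theorem, and to work throughout with the $z$-adapted length scale from Fact~\ref{GGEkomplex}, namely $r_z:=(\Re z)^{1/m-1}|z|=R^{-1}(1+\tau^2)^{1/2}$; note $r_z\ge 1/R$. I would split $X$ into the annuli $A(y,r_z,k)=B(y,(k+1)r_z)\setminus B(y,kr_z)$ for $k\in\N_0$ (so $A(y,r_z,0)=B(y,r_z)$) and estimate the weighted $L^2$-norm of $e^{-zL}\cf_{B(y,1/R)}f$ annulus by annulus. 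The whole point is to use annuli of width $r_z$, not of width $1/R$: with this calibration the off-diagonal decay of $e^{-zL}$ between $B(y,1/R)\subseteq B(y,r_z)$ and $A(y,r_z,k)$ is governed by $\exp(-bk^{m/(m-1)})$ with a rate \emph{independent of $\tau$}, and all the $\tau$-dependence is then carried by the size of the weight on the annuli.

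For the off-diagonal bound, since $L$ satisfies DG$_m$, I would invoke Lemma~\ref{GGEkomplexbelRadius}~b) with $p=q=2$, balls of radius $r=r_z$ (so $r/r_z=1$), and the complex time $\bar z$ (for which $\Re\bar z=\Re z$ and $|\bar z|=|z|$, hence $r_{\bar z}=r_z$). Since $D(\tfrac1p-\tfrac1q)=0$, the prefactors $|B(y,r_z)|^{-(1/p-1/q)}$, $(1+r/r_z)^{D(1/p-1/q)}$ and $(|z|/\Re z)^{D(1/p-1/q)}$ all equal $1$, and one is left with
\begin{align*}
\bigl\|\cf_{B(y,r_z)}e^{-\bar zL}\cf_{A(y,r_z,k)}\bigr\|_{L^2\to L^2}\leq Ck^De^{-bk^{m/(m-1)}}\leq C'e^{-b'k^{m/(m-1)}}\qquad(k\in\N).
\end{align*}
Taking adjoints (recall $(e^{-zL})^*=e^{-\bar zL}$) and using $B(y,1/R)\subseteq B(y,r_z)$ yields $\|\cf_{A(y,r_z,k)}e^{-zL}\cf_{B(y,1/R)}\|_{L^2\to L^2}\leq C'e^{-b'k^{m/(m-1)}}$ for all $k\in\N$, while for $k=0$ one simply uses that $e^{-zL}$ is an $L^2$-contraction, so this bound holds for all $k\in\N_0$ after enlarging $C'$.

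I would then assemble the estimate. On $A(y,r_z,k)$ one has $d(x,y)<(k+1)r_z$, hence $1+Rd(x,y)<1+(k+1)(1+\tau^2)^{1/2}\le 2(k+1)(1+\tau^2)^{1/2}$ because $(1+\tau^2)^{1/2}\ge1$, and therefore $(1+Rd(x,y))^{s}\le(2(k+1))^{s}(1+\tau^2)^{s/2}$ there. Since the $A(y,r_z,k)$ are pairwise disjoint with union $X$,
\begin{align*}
\bigl\|e^{-zL}\cf_{B(y,1/R)}f\bigr\|_{L^2(X,(1+Rd(\cdot,y))^sd\mu)}^2
&=\sum_{k=0}^\infty\int_{A(y,r_z,k)}\bigl|e^{-zL}\cf_{B(y,1/R)}f\bigr|^2(1+Rd(\cdot,y))^s\,d\mu\\
&\le(1+\tau^2)^{s/2}\sum_{k=0}^\infty(2(k+1))^s\bigl\|\cf_{A(y,r_z,k)}e^{-zL}\cf_{B(y,1/R)}f\bigr\|_{L^2}^2\\
&\le(1+\tau^2)^{s/2}(C')^2\Bigl(\sum_{k=0}^\infty(2(k+1))^se^{-2b'k^{m/(m-1)}}\Bigr)\|f\|_{L^2}^2,
\end{align*}
and the remaining series converges to a constant depending only on $s,m,D$ and the constants in the Davies-Gaffney and doubling conditions, but not on $R,\tau,y$. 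Taking square roots gives exactly the asserted bound with $(1+\tau^2)^{s/4}$.

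The only genuine obstacle is the calibration in the first step. With annuli of the naive width $1/R$ the off-diagonal rate of $e^{-zL}$ degrades to $\exp\bigl(-b(k/(1+\tau^2)^{1/2})^{m/(m-1)}\bigr)$, and then of order $(1+\tau^2)^{1/2}$ annuli each contribute an amount comparable to the corresponding weight $\sim(1+\tau^2)^{s/2}$, producing a spurious extra power of $1+\tau^2$; scaling the annuli to the natural width $r_z$, equivalently using Lemma~\ref{GGEkomplexbelRadius} precisely at $r/r_z=1$, confines the $\tau$-dependence to the single factor $(1+\tau^2)^{s/2}$ from the weight on the effective support of $e^{-zL}\cf_{B(y,1/R)}f$, and the geometric series in $k$ stays uniformly bounded.
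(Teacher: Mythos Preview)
Your proof is correct and follows essentially the same approach as the paper: decompose $X$ into annuli of width $r_z=R^{-1}(1+\tau^2)^{1/2}$, use the complex-time Davies-Gaffney estimate to obtain a $\tau$-independent exponential decay $\exp(-bk^{m/(m-1)})$ on each annulus, bound the weight there by $(k+1)^s(1+\tau^2)^{s/2}$ up to constants, and sum. The only cosmetic differences are that the paper reaches the annular off-diagonal bound via Fact~\ref{GGEkomplex} combined with Fact~\ref{GGEequiv} (rather than Lemma~\ref{GGEkomplexbelRadius}~b) plus adjoints), and it sums the annular contributions by the triangle inequality instead of your $\ell^2$-decomposition on disjoint sets; neither difference is substantive.
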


\begin{proof}
According to Fact \ref{GGEkomplex}, there are constants $b,C>0$ such
that for all $x,y\in X$ and all $z\in\C$ with $\Re z>0$
$$
\bigl\|\cf_{B(x,r_z)}e^{-zL}\cf_{B(y,r_z)}\bigr\|_{L^2\to L^2}
\leq
C\exp\Biggl(-b\biggl(\frac{d(x,y)}{r_z}\biggr)^{\frac{m}{m-1}}\Biggr)\,,
$$
where $r_z:=(\Re z)^{1/m-1}|z|$. By Fact \ref{GGEequiv}, this
two-ball estimate is equivalent to the assertion that there exist
$b,C>0$ such that for every $k\in\N_0$, $y\in X$, and $z\in\C$ with
$\Re z>0$
$$
\bigl\|\cf_{A(y,r_z,k)}e^{-\ov zL}\cf_{B(y,r_z)}\bigr\|_{L^2\to L^2}
\leq
C\exp\bigl(-bk^{\frac{m}{m-1}}\bigr)\,.
$$
Now let $R>0$, $s\geq0$, $\tau\in\R$, and $y\in X$ be fixed. For
$z:=(1+i\tau)R^{-m}$ we calculate $r_z=(1+\tau^2)^{1/2}/R\geq1/R$
and obtain
\begin{align*}
&\bigl\|e^{-(1+i\tau)R^{-m}L}\cf_{B(y,1/R)}\bigr\|_{L^2(X)\to
L^2(X,(1+Rd(\cdot,y))^sd\mu)}
\\&\quad\leq
\sum_{k=0}^\infty\,\bigl\|\cf_{A(y,r_z,k)}e^{-(1+i\tau)R^{-m}L}\cf_{B(y,1/R)}
\bigr\|_{L^2(X)\to L^2(X,(1+Rd(\cdot,y))^sd\mu)}
\\&\quad\leq
\sum_{k=0}^\infty\,\bigl(1+R(k+1)r_z\bigr)^{s/2}\,
\bigl\|\cf_{A(y,r_z,k)}e^{-(1+i\tau)R^{-m}L}\cf_{B(y,1/R)}\bigr\|_{L^2\to L^2}
\\&\quad\leq
\sum_{k=0}^\infty\,\bigl(1+(k+1)(1+\tau^2)^{1/2}\bigr)^{s/2}\,
\bigl\|\cf_{A(y,r_z,k)}e^{-(1+i\tau)R^{-m}L}\cf_{B(y,r_z)}\bigr\|_{L^2\to L^2}
\\&\quad\leq C
(1+\tau^2)^{s/4}\sum_{k=0}^\infty\,(k+2)^{s/2}\exp\bigl(-bk^{\frac{m}{m-1}}
\bigr)
\\&\quad\leqC
(1+\tau^2)^{s/4}\,.
\end{align*}
\end{proof}

The second preparatory statement is based on \cite[Lemma 4.3
a)]{DOS} and is used to transfer regularity of a function $F$ to an
off-diagonal $L^2$-estimate for $F(\sqrt[m]L)$. The only difference
between (\ref{DOS_Lem(4.2)_o_Planch}) and (\ref{DOS_Lem(4.2)}) is in
the norm of $F(R\cdot)$.

\begin{lemma}\label{DOS_Lem4_3aDG} \label{DOS_Lem4_3a}
Let $L$ be a non-negative, self-adjoint operator on $L^2(X)$
satisfying Davies-Gaffney estimates DG$_{m}$ for some $m\geq2$.
\begin{enumerate}
\item[\bf a)]
Then for any $s\geq0$ and $\eps>0$ there exists a constant $C>0$
such that
\begin{align}\label{DOS_Lem(4.2)_o_Planch}
\bigl\|F(\sqrt[m]L)\,\cf_{B(y,1/R)}\bigr\|_{L^2(X)\to
L^2(X,(1+Rd(\cdot,y))^sd\mu)}
\leq
C\,\|F(R\cdot)\|_{H_2^{(s+1)/2+\eps}}
\end{align}
for every $R>0$, $y\in X$, and every bounded Borel function
$F\colon[0,\infty)\to\C$ with $\supp F\subset[R/4,R]$ and
$F(R\cdot)\in H_2^{(s+1)/2+\eps}$.
\item[\bf b)]
Suppose additionally that $L$ fulfills the Plancherel condition
(\ref{DOS_Plancherel_Hardy}) for some $q\in[2,\infty]$. Then for
any $s\geq2/q$ and $\eps>0$ there exists a constant $C>0$ such that
\begin{align}\label{DOS_Lem(4.2)}
\bigl\|F(\sqrt[m]L)\,\cf_{B(y,1/R)}\bigr\|_{L^2(X)\to
L^2(X,(1+Rd(\cdot,y))^sd\mu)}
\leq
C\,\|F(R\cdot)\|_{H_q^{s/2+\eps}}
\end{align}
for every $R>0$, $y\in X$, and every bounded Borel function
$F\colon[0,\infty)\to\C$ with $\supp F\subset[R/4,R]$ and
$F(R\cdot)\in H_q^{s/2+\eps}$.
\end{enumerate}
\end{lemma}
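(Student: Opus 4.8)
The plan is to follow the Fourier-analytic strategy of \cite[Lemma~4.3]{DOS}, representing $F(\sqrt[m]L)$ in terms of the complexified semigroup $(e^{-zL})_{\Re z>0}$ and exploiting the weighted complex-time estimate of Lemma~\ref{DOS_Lem4_1} in part a) (and the analogous weighted bound coming from the Plancherel condition \eqref{DOS_Plancherel_Hardy} in part b)). Concretely, fix $R>0$ and a bounded Borel function $F$ supported in $[R/4,R]$; after the scaling $\lambda\mapsto R^m\lambda$ it suffices to treat $R=1$, so assume $\supp F\subset[1/4,1]$. Write $F(\sqrt[m]\lambda)=G(\lambda)e^{-\lambda}$ with $G(\lambda):=F(\sqrt[m]\lambda)e^{\lambda}$; since $G$ is supported in $[1/4,1]$ and inherits the regularity of $F$ (with comparable norms, because multiplication by the smooth cutoff-times-$e^\lambda$ is bounded on the relevant Bessel potential / H\"older spaces), we may expand $G$ in a Fourier integral, $G(\lambda)=\frac1{2\pi}\int_\R\wh G(\tau)e^{i\tau\lambda}\,d\tau$, and thereby obtain
\begin{align*}
F(\sqrt[m]L)=\frac1{2\pi}\int_\R\wh G(\tau)\,e^{-(1-i\tau)L}\,d\tau\,.
\end{align*}
Taking the weighted $L^2$-norm under the integral sign and invoking Lemma~\ref{DOS_Lem4_1} (with $R=1$) to bound $\|e^{-(1-i\tau)L}\cf_{B(y,1)}\|_{L^2\to L^2((1+d(\cdot,y))^s d\mu)}\leqC(1+\tau^2)^{s/4}$, we arrive at
\begin{align*}
\bigl\|F(\sqrt[m]L)\cf_{B(y,1)}\bigr\|_{L^2\to L^2((1+d(\cdot,y))^sd\mu)}
\leqC\int_\R|\wh G(\tau)|\,(1+\tau^2)^{s/4}\,d\tau\,.
\end{align*}
The right-hand side is then controlled by $\|G\|_{H_2^{(s+1)/2+\eps}}$ via Cauchy–Schwarz: split $|\wh G(\tau)|(1+\tau^2)^{s/4}=\bigl(|\wh G(\tau)|(1+\tau^2)^{(s+1)/4+\eps/2}\bigr)\cdot(1+\tau^2)^{-1/4-\eps/2}$, note the second factor is in $L^2(\R)$, and recognize the $L^2$-norm of the first factor as $\cong\|G\|_{H_2^{(s+1)/2+\eps}}\cong\|F\|_{H_2^{(s+1)/2+\eps}}$ (using $\supp F$ compact and bounded away from $0$). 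Undoing the scaling restores the factor $\|F(R\cdot)\|_{H_2^{(s+1)/2+\eps}}$ and proves a).

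For b), the scheme is identical but one uses a sharper weighted complex-time estimate exploiting the Plancherel hypothesis. The point is that \eqref{DOS_Plancherel_Hardy} upgrades the $L^\infty$-control of compactly supported functions of $\sqrt[m]L$ to $L^q$-control; combined with Davies–Gaffney in the form of Fact~\ref{GGEequiv} and an argument as in Lemma~\ref{DOS_Lem4_1} (cp.\ \cite[Lemma~4.1~and~4.2]{DOS}), one obtains, for $G$ supported in a fixed dyadic block, a bound of the shape $\|G(\sqrt[m]L)e^{-(1-i\tau)L}\cf_{B(y,1)}\|_{L^2\to L^2((1+d(\cdot,y))^sd\mu)}\leqC(1+\tau^2)^{s/4}\|G\|_{L^q}$-type, which—after the same Fourier expansion and Cauchy–Schwarz—trades one derivative for the gain $\|F(R\cdot)\|_{H_q^{s/2+\eps}}$ in place of $\|F(R\cdot)\|_{H_2^{(s+1)/2+\eps}}$, and requires $s\geq2/q$ so that the $L^q$-norm on the spatial side is meaningfully dominated.

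The main obstacle, and where care is needed, is the passage from the pointwise/operator complex-time estimates to the weighted ones with the correct power of $(1+\tau^2)$: one must sum the annular contributions $\cf_{A(y,r_z,k)}e^{-zL}\cf_{B(y,1)}$ over $k$ while keeping track that the relevant radius is $r_z=(1+\tau^2)^{1/2}$ (for $R=1$), so that the weight $(1+d(\cdot,y))^s$ on the $k$-th annulus costs roughly $(1+(k+1)(1+\tau^2)^{1/2})^{s/2}$, and then the Gaussian-type decay in $k$ absorbs the $k$-dependence while the residual $\tau$-dependence is exactly $(1+\tau^2)^{s/4}$. This is precisely the computation carried out in the proof of Lemma~\ref{DOS_Lem4_1}, so part a) reduces to that lemma essentially verbatim; for part b) one additionally has to verify that the Plancherel condition survives multiplication by $e^{\pm\lambda}$ and the dyadic localization, which is routine since all functions involved are supported in a fixed compact interval away from the origin.
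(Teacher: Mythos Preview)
Your plan for part a) matches the paper's proof essentially verbatim: write $F(\sqrt[m]L)=G(R^{-m}L)e^{-R^{-m}L}$ with $G(\la)=F(R\sqrt[m]\la)e^\la$, expand $G$ as a Fourier integral, feed in Lemma~\ref{DOS_Lem4_1}, and close with Cauchy--Schwarz to obtain $\|G\|_{H_2^{(s+1+\eps)/2}}\cong\|F(R\cdot)\|_{H_2^{(s+1+\eps)/2}}$. No issues there.

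Part b), however, has a genuine gap. Your sketch proposes to repeat the Fourier expansion and Cauchy--Schwarz while somehow invoking the Plancherel condition to ``trade one derivative'' and land on $\|F(R\cdot)\|_{H_q^{s/2+\eps}}$. But the Fourier/Cauchy--Schwarz route \emph{always} produces the factor $\int_\R|\wh G(\tau)|(1+\tau^2)^{s/4}\,d\tau$, and controlling this integral costs exactly the extra $1/2$ derivative regardless of $q$; there is no place in that chain where the Plancherel hypothesis can enter to remove it. The operator you write, $G(\sqrt[m]L)e^{-(1-i\tau)L}$, does not appear in the Fourier expansion (only $e^{-(1-i\tau)R^{-m}L}$ does), so your proposed ``sharper weighted complex-time estimate'' is not the quantity being integrated.

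The mechanism the paper uses is different: it is \emph{complex interpolation} between two endpoint estimates for the linear map $H\mapsto H(R^{-1}\sqrt[m]L)\ph$ (with $\ph$ supported in $B(y,1/R)$). One endpoint is the unweighted Plancherel bound \eqref{DOS_Plancherel_Hardy}, which gives control by $\|H\|_{L^q}$ at weight exponent $0$; the other endpoint is part a), which gives control by $\|H\|_{H_q^{(s+1+\eps)/2}}$ at weight exponent $s$ (after the embedding $H_2\hookleftarrow H_q$). Interpolating at parameter $\theta$ yields weight exponent $\theta s$ against $\|H\|_{H_q^{\theta(s+1+\eps)/2+\del}}$; the point is that the unwanted $+1/2$ becomes $+\theta/2$, and by taking $s$ large and $\theta$ correspondingly small one reaches any prescribed $s'=\theta s$ with regularity $s'/2+\eps'$. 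This interpolation step (going back to \cite{MaMe} and \cite[p.~455]{DOS}) is the key idea you are missing; without it the extra $1/2$ cannot be removed.
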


\begin{proof}
Let $R>0$ and $F\colon[0,\infty)\to\C$ be a bounded Borel function
with $\supp F\subset[R/4,R]$. For all $\la\geq0$ define
$G(\la):=F(R\sqrt[m]\la)\,e^\la$. If $\widehat G$ denotes the
Fourier transform of $G$, then it holds
\begin{align*}
F(\sqrt[m]L)
&=
G(R^{-m}L)\,e^{-R^{-m}L}
=
\frac1{\sqrt{2\pi}}
\int_{-\infty}^\infty\widehat G(\tau)e^{-(1-i\tau)R^{-m}L}\,d\tau
\end{align*}
in the strong convergence sense in $L^2(X)$. Thus, Lemma
\ref{DOS_Lem4_1} and the Cauchy-Schwarz inequality yield for any
$y\in X$, $s\geq0$, and $\eps>0$ whenever $F(R\cdot)\in
H_2^{(s+1)/2+\eps}$
\begin{align}\label{DOS_Lem(4.4)}
&\bigl\|F(\sqrt[m]L)\,\cf_{B(y,1/R)}\bigr\|_{L^2(X)\to
L^2(X,(1+Rd(\cdot,y))^sd\mu)}
\nonumber
\\&\quad\leqC
\int_{-\infty}^\infty|\widehat
G(\tau)|\,\bigl\|e^{-(1-i\tau)R^{-m}L}\cf_{B(y,1/R)}
 \bigr\|_{L^2(X)\to L^2(X,(1+Rd(\cdot,y))^sd\mu)}\,d\tau
\nonumber
\\&\quad\leqC
\int_{-\infty}^\infty|\widehat G(\tau)|\,(1+\tau^2)^{s/4}\,d\tau
\nonumber
\\&\quad\leq
\biggl(\int_{-\infty}^\infty
 |\widehat G(\tau)|^2\,(1+\tau^2)^{\frac{s+1+\eps}2}\,d\tau\biggr)^{1/2}
 \biggl(\int_{-\infty}^\infty(1+\tau^2)^{-\frac{1+\eps}2}\,d\tau\biggr)^{1/2}
\nonumber
\\&\quad\leqC
\|G\|_{H_2^{(s+1+\eps)/2}}\,.
\end{align}
Due to $\supp F(R\cdot)\subset[1/4,1]$, it follows
\begin{align}\label{DOS_Lem(4.5)}
\|G\|_{H_2^{(s+1+\eps)/2}}
\leqC
\|F(R\cdot)\|_{H_2^{(s+1+\eps)/2}}
\leqC
\|F(R\cdot)\|_{H_q^{(s+1+\eps)/2}}
\end{align}
for each $q\in[2,\infty]$. From (\ref{DOS_Lem(4.4)}) and
(\ref{DOS_Lem(4.5)}) we obtain part a) of the lemma.

Inserting (\ref{DOS_Lem(4.5)}) in (\ref{DOS_Lem(4.2)_o_Planch})
leads to a statement in which the required order of
differentiability of the function $F(R\cdot)$ is $1/2$ larger than
that of part b). In order to get rid of this additional $1/2$, we
make use of the interpolation procedure as described in \cite[p.\
455]{DOS} (see also \cite{MaMe}) based on the Plancherel condition
(\ref{DOS_Plancherel_Hardy}). By a simple scaling argument, we
first observe that the claimed bound (\ref{DOS_Lem(4.2)}) is
equivalent to the following estimate
\begin{align}\label{DOS_Lem(4.6)}
&\bigl\|H(R^{-1}\sqrt[m]L)\,\cf_{B(y,1/R)}\bigr\|_{L^2(X)\to
L^2(X,(1+Rd(\cdot,y))^sd\mu)}
\leqC
\|H\|_{H_q^{s/2+\eps}}
\end{align}
for any $\eps>0$, $s\geq2/q$, $R>0$, $y\in X$, and any bounded Borel
function $H\colon[0,\infty)\to\C$ with $\supp H\subset[1/4,1]$ and
$H\in H_q^{s/2+\eps}$.

\smallskip
For fixed $R>0$, $y\in X$, and $\ph\in L^2(X)$ with
$\supp\ph\subset B(y,1/R)$ and $\|\ph\|_{L^2}=1$ define
$$
K_{y,R,q}\colon E_q\to L^2(X)\,,
\quad H\mapsto H(R^{-1}\sqrt[m]L)\ph\,,
$$
where $E_q:=L^\infty([1/4,1])$ if $q<\infty$ and $E_q:=C^0([1/4,1])$
if $q=\infty$. According to the Plancherel condition
(\ref{DOS_Plancherel_Hardy}), we see after rescaling that
\begin{align*}
\|K_{y,R,q}(H)\|_{L^2}
\leqC
\|H\|_{L^q([1/4,1])}
\end{align*}
for every $H\in E_q$. Next, for $\al\geq0$ we denote by
$H_q^\al([1/4,1])$ the set of all $H\in H_q^\al$ with $\supp
H\subset[1/4,1]$. The inequalities (\ref{DOS_Lem(4.4)}) and
(\ref{DOS_Lem(4.5)}) lead to
$$
\|K_{y,R,q}(H)\|_{L^2(X,(1+Rd(\cdot,y))^sd\mu)}\leqC
\|H\|_{H_q^{(s+1+\eps)/2}([1/4,1])}
$$
for any $s\geq0$, $\eps>0$, and $H\in H_q^{(s+1+\eps)/2}([1/4,1])$.
Now complex interpolation yields for every $\theta\in(0,1)$
\begin{align}\label{erkhgaejbgke}
\|K_{y,R,q}(H)\|_{L^2(X,(1+Rd(\cdot,y))^{\theta s}d\mu)}
\leqC
\|H\|_{H_q^{(s+1+\eps)\theta/2+\delta}([1/4,1])}
\end{align}
for any $s\geq0$, $\eps>0$, $H\in
H_q^{(s+1+\eps)\theta/2+\del}([1/4,1])$, and $\delta>0$.

\smallskip
Let $s'\geq2/q$ and $\eps'>0$ be arbitrary. Take $\theta\in(0,1)$
and $\del>0$ with $(1+\eps)\theta/2+\delta=\eps'$. Next, choose
$s\geq0$ with $s\theta=s'$. Then inequality (\ref{erkhgaejbgke})
reads
$$
\|K_{y,R}(H)\|_{L^2(X,(1+Rd(\cdot,y))^{s'}d\mu)}
\leqC
\|H\|_{H_q^{s'/2+\eps'}([1/4,1])}
$$
for any $H\in H_q^{s'/2+\eps'}([1/4,1])$. Taking the supremum over
all $\ph\in L^2(X)$ such that $\supp\ph\subset B(y,1/R)$ and
$\|\ph\|_{L^2}=1$ yields
$$
\bigl\|H(R^{-1}\sqrt[m]L)\,\cf_{B(y,1/R)}\bigr\|_
{L^{2}(X)\to L^2(X,(1+Rd(\cdot,y))^{s'}d\mu)}
\leqC
\|H\|_{H_q^{s'/2+\eps'}([1/4,1])}
$$
for any $H\in H_q^{s'/2+\eps'}([1/4,1])$. This proves
(\ref{DOS_Lem(4.6)}) and thus (\ref{DOS_Lem(4.2)}).
\end{proof}

\begin{spezbew}{Theorem \ref{DY_Thm1.1}~a)}
Let $F\colon[0,\infty)\to\C$ be a bounded Borel function. Observe
that $F$ satisfies (\ref{HC_Bessel}) if and only if the function
$\la\mapsto F(\sqrt[m]\la)$ satisfies (\ref{HC_Bessel}). Hence, we
can consider $F(\sqrt[m]L)$ in lieu of $F(L)$ during the proof.
First, we write
$$
F(\sqrt[m]L)=(F-F(0))(\sqrt[m]L)+F(0)I
$$
and notice, after replacing $F$ by $F-F(0)$, that we may assume
$F(0)=0$ in the sequel. Due to the properties of $\om$, for every
$\la\geq0$ we then have the decomposition
$$
F(\la)
=
\sum_{l=-\infty}^\infty\om(2^{-l}\la)F(\la)
=
\sum_{l=-\infty}^\infty F_l(\la)\,,
$$
where $F_l(\la):=\om(2^{-l}\la)F(\la)$.

Fix $s>D/2$ and $M\in\N$ with $M>2s/m$. Further, assume that $F$
fulfills the Hörmander condition (\ref{HC_Bessel}) of order $s+1/2$.
For verifying the uniform boundedness of
$\sum_{l=-N}^NF_l(\sqrt[m]L)$ in $H_L^1(X)$, we apply Theorem
\ref{DY_Thm3.1}. To this end, we only need to check that condition
(\ref{DY(3.1)}) holds for the operator $\sum_{l=-N}^NF_l(\sqrt[m]L)$
with a constant $C_F$ independent of $N\in\N$.

For each $l\in\Z$ and $r>0$, we introduce the abbreviations
\begin{align*}
F_{r,M}(\la)&:=F(\la)(1-e^{-(r\la)^m})^M\,,\\
F_{r,M}^l(\la)&:=F_l(\la)(I-e^{-(r\la)^m})^M
=\om(2^{-l}\la)F(\la)(1-e^{-(r\la)^m})^M\,,
\end{align*}
where $\la\geq0$. In this notation, we may write
\begin{align}\label{DY(4.7)}
F(\sqrt[m]L)(I-e^{-r^mL})^M
=
F_{r,M}(\sqrt[m]L)
=
\lim_{N\to\infty}\sum_{l=-N}^NF^l_{r,M}(\sqrt[m]L)\,.
\end{align}
We choose $s'\in(D/2,s)$ and claim that for all
$j\in\N\setminus\{1\}$, $l\in\Z$, and balls $B\subset X$
of radius $r$
\begin{align}\label{DY(4.8)}
&\bigl\|\cf_{U_j(B)}F^l_{r,M}(\sqrt[m]L)\,\cf_B\bigr\|_{L^2\to L^2}
\leqC
C_{\om,s}2^{-js'}(2^lr)^{-s'}\min\bigl\{1,(2^lr)^{mM}\bigr\}
\max\bigl\{1,(2^lr)^{D/2}\bigr\}\,,
\end{align}
where $C_{\om,s}:=\sup_{n\in\Z}\|\om F(2^n\cdot)\|_{H_2^{s+1/2}}$
and the implicit constant depends only on $m,M,s$ and the constants
in the Davies-Gaffney and the doubling condition.

This, together with (\ref{DY(4.7)}), shows that for any
$j\in\N\setminus\{1\}$ and any ball $B\subset X$ of radius~$r$
\begin{align*}
&\bigl\|\cf_{U_j(B)}F(\sqrt[m]L)(I-e^{-r^mL})^M\cf_B\bigr\|_{L^2\to L^2}
\\&\quad\leqC
C_{\om,s}2^{-js'}\lim_{N\to\infty}\sum_{l=-N}^N(2^lr)^{-s'}
\min\bigl\{1,(2^lr)^{mM}\bigr\}\max\bigl\{1,(2^lr)^{D/2}\bigr\}
\\&\quad\leq
C_{\om,s}2^{-js'}\Biggl(\sum_{l\in\Z\colon2^lr>1}(2^lr)^{D/2-s'}+
\sum_{l\in\Z\colon2^lr\leq1}(2^lr)^{mM-s'}\Biggr)
\,.
\end{align*}

As both sums converge and have an upper bound independent of $r$,
the estimate (\ref{DY(3.1)}) holds for the function
$F(\sqrt[m]{\cdot\,})$, as desired.

\smallskip
It remains to prove our claim (\ref{DY(4.8)}). Consider a ball
$B\subset X$ with center $y\in X$ and radius $r>0$. First, we
observe that $\supp F_{r,M}^l\subset(2^{l-2},2^l)$.
Lemma \ref{DOS_Lem4_3aDG} a) then 
says that for any $l\in\Z$ and any $\eps>0$
\begin{align}\label{rwflkareflgfklrefg}
\bigl\|&F_{r,M}^l(\sqrt[m]L)\,\cf_{B(y,2^{-l})}
\bigr\|_{L^2(X)\to L^2(X,(1+2^ld(\cdot,y))^{2s'}d\mu)}
\leqC
\|F_{r,M}^l(2^l\cdot)\|_{H_2^{s'+1/2+\eps}}\,.
\end{align}
Let $j\in\N\setminus\{1\}$. For each $x\in U_j(B)$ we obtain, due to
$d(x,y)\geq2^{j-1}r$, the estimate
$(1+2^ld(x,y))^{s'}\geq2^{(j-1)s'}(2^lr)^{s'}$. Hence, we get for
$\eps:=s-s'>0$
\begin{align*}
&2^{-s'}2^{js'}(2^lr)^{s'}\bigl\|\cf_{U_j(B)}
F_{r,M}^l(\sqrt[m]L)\,\cf_{B(y,2^{-l})}\bigr\|_{L^2\to L^2}
\\&\qquad\leq
\bigl\|\cf_{U_j(B)}F_{r,M}^l(\sqrt[m]L)\,\cf_{B(y,2^{-l})}
\bigr\|_{L^2(X)\to L^2(X,(1+2^ld(\cdot,y))^{2s'}d\mu)}
\\&\qquad\leqC
\|F_{r,M}^l(2^l\cdot)\|_{H_2^{s+1/2}}
\end{align*}
or equivalently
\begin{align}\label{3riz3hottnz}
\bigl\|\cf_{U_j(B)}F_{r,M}^l(\sqrt[m]L)\,\cf_{B(y,2^{-l})}\bigl\|_{L^2\to L^2}
\leqC
2^{-js'}(2^lr)^{-s'}\|F_{r,M}^l(2^l\cdot)\|_{H_2^{s+1/2}}\,.
\end{align}
For $l\in\Z$ with $r\leq2^{-l}$ the left-hand side is an upper bound
for $\|\cf_{U_j(B)}F_{r,M}^l(\sqrt[m]L)\,\cf_{B}\|_{2\to2}$.

In the case $l\in\Z$ with $r>2^{-l}$, we cover $B=B(y,r)$ by balls
of radius $2^{-l}$. This procedure eventually leads to an additional
factor depending on the ratio of $r$ and $2^{-l}$ and the dimension
of the underlying space $X$. By Lemma \ref{ueberdeckung}, one can
construct a family of points $y_1,\ldots,y_K\in B(y,r)$ such that
$B(y,r)\subset\bigcup_{\nu=1}^KB(y_\nu,2^{-l})$, $K\leqC(2^lr)^D$,
and every $x\in B(y,r)$ is contained in at most $M$ balls
$B(y_\nu,2^{-l})$, where $M$ depends only on the constants in the
doubling condition. Observe that
$$
U_j(B(y,r))\subset\bigcup_{\eta=j-1}^{j+1}U_\eta(B(y_\nu,r))
$$
for all $j\in\N\setminus\{1\}$ and $\nu\in\{1,2,\ldots,K\}$. Therefore,
by (\ref{3riz3hottnz}), one obtains
\begin{align*}
\bigl\|\cf_{U_j(B(y,r))}F_{r,M}^l(\sqrt[m]L)\,\cf_{B(y_\nu,2^{-l})}\bigr\|_{
L^2\to L^2}
&\leq
\sum_{\eta=j-1}^{j+1}
\bigl\|\cf_{U_\eta(B(y_\nu,r))}F_{r,M}^l(\sqrt[m]L)\,\cf_{B(y_\nu,2^{-l})}
\bigr\|_{L^2\to L^2}
\\&\leqC
\sum_{\eta=j-1}^{j+1}2^{-\eta
s'}(2^lr)^{-s'}\|F_{r,M}^l(2^l\cdot)\|_{H_2^{s+1/2}}
\\&\leqC
2^{-js'}(2^lr)^{-s'}\|F_{r,M}^l(2^l\cdot)\|_{H_2^{s+1/2}}\,.
\end{align*}
Consider $g,h\in L^2(X)$ with $\|g\|_{L^2}=1$ and $\|h\|_{L^2}=1$. Then
we obtain for every $j\in\N\setminus\{1\}$ and every $l\in\Z$ with $r>2^{-l}$
\begin{align*}
\bigl|\bigl\langle
h,\cf_{U_j(B(y,r))}F_{r,M}^l(\sqrt[m]L)\,\cf_{B(y,r)}g\bigr\rangle\bigr|^2
&=
\bigl|\bigl\langle
\cf_{B(y,r)}F_{r,M}^l(\sqrt[m]L)^*\,\cf_{U_j(B(y,r))}h,g\bigr\rangle\bigr|^2
\\&\leq
\bigl\|\cf_{B(y,r)}F_{r,M}^l(\sqrt[m]L)^*\,\cf_{U_j(B(y,r))}h\bigr\|^2_{L^2}\,
\|g\|_{L^2}^2
\\&=
\int_{B(y,r)}\bigl|F_{r,M}^l(\sqrt[m]L)^*(\cf_{U_j(B(y,r))}h)(x)\bigr|^2\,
d\mu(x)
\\&\leq
\sum_{\nu=1}^K\int_{B(y_\nu,2^{-l})}\bigl|
F_{r,M}^l(\sqrt[m]L)^*(\cf_{U_j(B(y,r))}h)(x)\bigr|^2\,d\mu(x)
\\&\leq
\sum_{\nu=1}^K\bigl\|\cf_{U_j(B(y,r))}
F_{r,M}^l(\sqrt[m]L)\,\cf_{B(y_\nu,2^{-l})}\bigr\|_{L^2\to L^2}^2
\\&\leqC
\sum_{\nu=1}^K\bigl(2^{-js'}(2^lr)^{-s'}\|
F_{r,M}^l(2^l\cdot)\|_{H_2^{s+1/2}}\bigr)^2\,.
\end{align*}
Thus, by taking the supremum over all such $g,h$ and by recalling
$\sqrt K\leqC(2^lr)^{D/2}$, we deduce
$$
\bigl\|\cf_{U_j(B(y,r))}F_{r,M}^l(\sqrt[m]L)\,\cf_{B(y,r)}\bigr\|_{L^2\to L^2}
\leqC
(2^lr)^{D/2}\,2^{-js'}(2^lr)^{-s'}\|F_{r,M}^l(2^l\cdot)\|_{H_2^{s+1/2}}\,.
$$
In summary, we have shown that
\begin{align}\label{rwfklwbfkawf}
\bigl\|\cf_{U_j(B)}F_{r,M}^l(\sqrt[m]L)\,\cf_{B}\bigr\|_{L^2\to L^2}
\leqC
\max\bigl\{1,(2^lr)^{D/2}\bigr\}\,2^{-js'}(2^lr)^{-s'}\|
F_{r,M}^l(2^l\cdot)\|_{H_2^{s+1/2}}
\end{align}
for any $j\in\N\setminus\{1\}$, $l\in\Z$, and any ball
$B\subset X$ of radius $r$.

\smallskip
If $\ga$ is an integer larger than $s+1/2$, then it holds
\begin{align}\label{rewkjkbewrf}
\|F_{r,M}^l(2^l\cdot)\|_{H_2^{s+1/2}}\nonumber
&=
\bigl\|\la\mapsto\om(\la)F(2^l\la)(1-e^{-(2^lr\la)^m})^M\bigr\|_{H_2^{s+1/2}}
\\&\leqC\nonumber
\|\om F(2^l\cdot)\|_{H_2^{s+1/2}}
\bigl\|\la\mapsto(1-e^{-(2^lr\la)^m})^M\bigr\|_{C^\ga([\frac14,1])}
\\&\leqC
\sup_{n\in\Z}\|\om F(2^n\cdot)\|_{H_2^{s+1/2}}\min\bigl\{1,(2^lr)^{mM}\bigr\}\,.
\end{align}
The first inequality is due to \cite[Corollary (ii), p.\ 143]{Tri},
whereas the second inequality follows from \cite[Lemma 3.5]{B}.

In view of (\ref{rwfklwbfkawf}) and (\ref{rewkjkbewrf}), the claim
(\ref{DY(4.8)}) is confirmed. This completes the proof.
\end{spezbew}

The proof of Theorem~\ref{Hardy_Plancherel} follows the same lines
as that of Theorem~\ref{DY_Thm1.1}~a) with one small modification.
Instead of Lemma \ref{DOS_Lem4_3aDG}~a) one has to employ part b) of
the same lemma to obtain the desired regularity order in the
Hörmander condition.

\section{Boundedness of spectral multipliers on
\texorpdfstring{$\smash{H^p_L(X)}$}{H\^{}p(X)} and
\texorpdfstring{$\smash{L^p(X)}$}{L\^{}p(X)}}
\label{Chapter6Interpol}

In the preceding section we established spectral multiplier theorems on
the Hardy space $H^1_L(X)$ which ensure the boundedness of the
operator $F(L)$ on $H^1_L(X)$, where $F$ is a bounded Borel function
satisfying \eqref{HC_Bessel} or \eqref{HC_Hoelder} and $L$ is an
injective, non-negative, self-adjoint operator on $L^2(X)$ for which
Davies-Gaffney estimates hold. Since self-adjoint operators on
$L^2(X)$ have the functional calculus for arbitrary bounded Borel
functions $\R\to\C$ without any regularity hypothesis, one expects
that the regularity assumptions on $F$ can be weakened when one asks
about boundedness of $F(L)$ on $H^p_L(X)$ for some $p\in(1,2)$. This
is actually true, as the interpolation procedure described in
\cite[Section 4.6.1]{CK} shows.

\begin{definition}
Let $p\in[1,2]$, $q\in[2,\infty]$, $s>1/q$, and $L$ be an injective,
non-negative, self-adjoint operator on $L^2(X)$ which fulfills
Davies-Gaffney estimates. We say that $L$ has an {\em $\mathcal
H_q^s$ Hörmander calculus on $H_L^p(X)$} if there exists a constant
$C>0$ such that
$$
\|F(L)\|_{H_L^p(X)\to H_L^p(X)}\leq C\sup_{n\in\Z}\|\om
F(2^n\cdot)\|_{H_q^s}
$$
for all $F\in\mathcal H_q^s:=\{F\colon(0,\infty)\to\C \mbox{ bounded
Borel function such that }
\sup_{n\in\Z}\|\om F(2^n\cdot)\|_{H^s_q}<\infty\}$.

Since the Hörmander condition $\sup_{n\in\Z}\|\om
F(2^n\cdot)\|_{H^s_q}<\infty$ contains no information on the value
of $F(0)$, the value $F(0)$ is not regarded in the so-called {\em
Hörmander class} $\mathcal H_q^s$. But this causes no problems as
long as one studies injective operators.
\end{definition}

The interpolation statement concerning the Hörmander calculus,
adapted to our present situation, reads as follows (cf.\
\cite[Corollary~4.84]{CK}).

\begin{fact}\label{Interpolation-von-Christoph}
Let $L$ be an injective, non-negative, self-adjoint operator on
$L^2(X)$ such that Davies-Gaffney estimates DG$_m$ hold for some
$m\geq2$. Assume that $L$ has an $\mathcal H_q^s$ Hörmander calculus
on the Hardy space $H^1_L(X)$ for some $q\geq2$ and $s>1/q$. Then,
for any $\theta\in(0,1)$, the operator $L$ has an $\mathcal
H_{q_\theta}^{s_\theta}$ Hörmander calculus on
$[L^2(X),H_L^1(X)]_\theta$ whenever $s_\theta>\theta s$ and
$q_\theta>q/\theta$.
\end{fact}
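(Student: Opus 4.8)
The plan is to obtain Fact~\ref{Interpolation-von-Christoph} from Stein's analytic interpolation theorem applied to an analytic family of operators $z\mapsto G_z(L)$. The starting observation is that, $L$ being self-adjoint, $\|F(L)\|_{L^2\to L^2}=\|F\|_{L^\infty(\si(L))}\leq\|F\|_{L^\infty}$ for every bounded Borel function $F$, while $\|F\|_{L^\infty}\leqC\sup_{n\in\Z}\|\om F(2^n\cdot)\|_{L^\infty}\leqC\sup_{n\in\Z}\|\om F(2^n\cdot)\|_{H_{q_0}^{s_0}}$ for \emph{every} pair $s_0>1/q_0$ --- the first inequality because the supports of the dilates $\om(2^{-n}\cdot)$ have bounded overlap, the second by the Sobolev embedding $H_{q_0}^{s_0}(\R)\hookrightarrow L^\infty(\R)$. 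Hence $L$ has an $\mathcal{H}_{q_0}^{s_0}$ H\"ormander calculus on $L^2(X)=H_L^2(X)$ for every admissible $(s_0,q_0)$. Since $[L^2(X),H_L^1(X)]_\theta=H_L^{p_\theta}(X)$ with $1/p_\theta=(1-\theta)/2+\theta$ by Fact~\ref{HardyInterpol} (together with $[A,B]_\theta=[B,A]_{1-\theta}$), it is enough to prove the following interpolation principle: if $L$ admits an $\mathcal{H}_{q_0}^{s_0}$ calculus on $H_L^{p_0}(X)$ and an $\mathcal{H}_{q_1}^{s_1}$ calculus on $H_L^{p_1}(X)$, then it admits an $\mathcal{H}_{q_\theta}^{s_\theta}$ calculus on $[H_L^{p_0}(X),H_L^{p_1}(X)]_\theta$ whenever $s_\theta>(1-\theta)s_0+\theta s_1$ and $1/q_\theta<(1-\theta)/q_0+\theta/q_1$. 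Fact~\ref{Interpolation-von-Christoph} then follows by choosing $p_0=2$, $p_1=1$, $(s_1,q_1)=(s,q)$ and letting $s_0\downarrow0$, $q_0\uparrow\infty$.

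To prove this principle, fix $F\in\mathcal{H}_{q_\theta}^{s_\theta}$ and set $A:=\sup_{n\in\Z}\|\om F(2^n\cdot)\|_{H_{q_\theta}^{s_\theta}}$. By a density argument it suffices to treat $F$ in a convenient dense subclass --- for instance bounded Borel functions with $\supp F$ a compact subset of $(0,\infty)$, which is legitimate since $L$ is injective and the H\"ormander norm does not see $F(0)$ --- so that all the dyadic sums occurring below are finite. I would then construct an analytic family $\{G_z:0\leq\Re z\leq1\}$ of functions of this type with $G_\theta=F$ and
$$
\sup_{n\in\Z}\|\om G_z(2^n\cdot)\|_{H_{q_0}^{s_0}}\leq C_z\,A\quad(\Re z=0),\qquad
\sup_{n\in\Z}\|\om G_z(2^n\cdot)\|_{H_{q_1}^{s_1}}\leq C_z\,A\quad(\Re z=1),
$$
with $C_z$ of at most polynomial growth in $|\Im z|$. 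Granting this, $T_z:=G_z(L)$ is an analytic family of operators (in the strong operator topology on $L^2(X)$) which, by the two H\"ormander-calculus hypotheses, is uniformly bounded on $H_L^{p_0}(X)$ for $\Re z=0$ and on $H_L^{p_1}(X)$ for $\Re z=1$. Stein interpolation then gives $\|F(L)\|_{H_L^{p_\theta}\to H_L^{p_\theta}}=\|T_\theta\|_{H_L^{p_\theta}\to H_L^{p_\theta}}\leqC A$, which is precisely the asserted calculus bound.

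The construction of $\{G_z\}$ is the heart of the matter, and I expect it to be the main obstacle. On each dyadic window one has to interpolate analytically between the Bessel potential spaces $H_{q_0}^{s_0}(\R)$ and $H_{q_1}^{s_1}(\R)$, using the identity $[H_{q_0}^{s_0},H_{q_1}^{s_1}]_\theta=H_{q_\theta}^{s_\theta}$ on $\R$ --- up to the customary $\eps$-loss when some $q_i=\infty$, which is why the inequalities in the statement are strict. Concretely I would combine two commuting devices: a Calder\'on-type device built from complex powers $(1-\partial_\xi^2)^{w(z)}$ of the one-dimensional Bessel potential in the rescaled frequency variable, moving the smoothness index affinely from $s_\theta$ at $z=\theta$ to $s_0$ at $z=0$ and $s_1$ at $z=1$, and a Stein--Thorin device $g\mapsto|g|^{z/\theta}\operatorname{sgn}g$ (suitably truncated) moving the integrability index from $q_\theta$ towards $q_0$ and $q_1$. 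The delicate point is to arrange these operations against a slightly enlarged partition of unity $\wt\om$ with $\wt\om\,\om=\om$ so that the outcome is a \emph{single} function $G_z$ on $(0,\infty)$, depending analytically on $z$, all of whose rescaled windows obey the stated norm bounds; the interaction between different windows is controlled by the rapid off-window decay produced by the Bessel potential and the truncated Thorin map. This is exactly what is worked out in \cite[Section~4.6.1]{CK}.

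Alternatively, one can avoid hand-building the analytic family by characterising the $\mathcal{H}_q^s$ calculus of $L$ on a Banach function space $Y$ through quadratic ($\gamma$-)Littlewood--Paley estimates for $L$ on $Y$ and then interpolating those estimates, combining interpolation of the H\"ormander classes in the smoothness parameter with complex interpolation of the $Y$-scale; optimising over the parameters on the $L^2$-side again produces the endpoint conditions $s_\theta>\theta s$ and $q_\theta>q/\theta$.
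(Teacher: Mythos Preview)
The paper does not prove Fact~\ref{Interpolation-von-Christoph} at all: it is quoted verbatim as an external result, with the sole reference ``cf.\ \cite[Corollary~4.84]{CK}''. So there is no in-paper argument to compare against; your sketch is already more than what the paper supplies.

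As for the sketch itself, the overall architecture is sound. The reduction is correct: the spectral theorem plus the Sobolev embedding $H_{q_0}^{s_0}(\R)\hookrightarrow L^\infty(\R)$ give an $\mathcal H_{q_0}^{s_0}$ calculus on $L^2(X)$ for every $s_0>1/q_0$, and letting $s_0\downarrow0$, $q_0\uparrow\infty$ in the bilinear interpolation principle you state does yield precisely $s_\theta>\theta s$ and $q_\theta>q/\theta$. The use of Fact~\ref{HardyInterpol} to identify $[L^2(X),H_L^1(X)]_\theta$ is also correct. Two small points: your ``density argument'' is not density in the H\"ormander norm (compactly supported functions are not dense there; think of $F\equiv1$), but rather uniform boundedness of the truncations plus strong convergence on $H_L^p\cap L^2$, which is the standard reduction; and the Stein--Thorin device $g\mapsto|g|^{z/\theta}\operatorname{sgn}g$ is nonlinear and does not obviously commute with the Bessel-potential smoothing, so the coupling of the two is genuinely delicate --- you are right to flag this as the obstacle and to defer to \cite{CK}. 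Your alternative route via square-function ($\gamma$-)characterisations of the $\mathcal H_q^s$ calculus is in fact closer to how \cite{CK} actually proceeds, and is arguably cleaner than building the analytic family by hand.
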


With the help of this interpolation result, we obtain spectral
multiplier theorems on the Hardy space $H_L^p(X)$ for each
$p\in[1,2]$. We also state a version including the Plancherel
condition which yields a lower regularity order in the Hörmander
condition.

\begin{theorem}\label{mainresHardy}
Let $L$ be an injective, non-negative, self-adjoint operator on
$L^2(X)$ satisfying Davies-Gaffney estimates DG$_{m}$ for some
$m\geq2$.
\begin{enumerate}
\item[\bf a)]
Fix $p\in[1,2]$. Let $s>(D+1)(1/p-1/2)$ and $1/q<1/p-1/2$. Then $L$
has an $\mathcal H_q^s$ Hörmander calculus on $H_L^p(X)$, i.e.\ for
every bounded Borel function $F\colon(0,\infty)\to\C$ with
$\sup_{n\in\Z}\|\om F(2^n\cdot)\|_{H_q^s}<\infty$, there exists a
constant $C>0$ such that
$$
\|F(L)\|_{H_L^p(X)\to H_L^p(X)}\leq C\sup_{n\in\Z}\|\om
F(2^n\cdot)\|_{H_q^s}\,.
$$

\item[\bf b)]
Let $p\in[1,2]$ and $s>D(1/p-1/2)$. Then $L$ has an $\mathcal
H_\infty^s$ Hörmander calculus on $H_L^p(X)$, i.e.\ for every
bounded Borel function $F\colon(0,\infty)\to\C$ with
$\sup_{n\in\Z}\|\om F(2^n\cdot)\|_{C^s}<\infty$, there exists a
constant $C>0$ such that
$$
\|F(L)\|_{H_L^p(X)\to H_L^p(X)}\leq C\sup_{n\in\Z}\|\om
F(2^n\cdot)\|_{C^s}\,.
$$

\item[\bf c)]
Assume further that $L$ fulfills the Plancherel condition
(\ref{DOS_Plancherel_Hardy}) for some $q_0\in[2,\infty)$. Fix
$p\in[1,2]$. Let $s>\max\{D,2/q_0\}\,(1/p-1/2)$ and
$1/q<2/q_0\,(1/p-1/2)$. Then $L$ has an $\mathcal H_q^s$ Hörmander
calculus on $H_L^p(X)$, i.e.\ for every bounded Borel function
$F\colon(0,\infty)\to\C$ with $\sup_{n\in\Z}\|\om
F(2^n\cdot)\|_{H_q^s}<\infty$, there exists a constant $C>0$ such
that
$$
\|F(L)\|_{H_L^p(X)\to H_L^p(X)}\leq C\sup_{n\in\Z}\|\om
F(2^n\cdot)\|_{H_q^s}\,.
$$
\end{enumerate}
\end{theorem}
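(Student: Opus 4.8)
The plan is to derive all three assertions from the endpoint $p=1$, where they are the substantive results already established, by complex interpolation against the trivial endpoint $p=2$. For $p=2$ parts a) and c) are vacuous, since they demand $1/q<1/p-1/2=0$, while part b) follows from the spectral theorem ($H^2_L(X)=L^2(X)$, hence $\|F(L)\|_{L^2\to L^2}\le\|F\|_{L^\infty}$) together with the elementary bound $\|F\|_{L^\infty}\leqC\sup_{n\in\Z}\|\om F(2^n\cdot)\|_{C^s}$ obtained by evaluating $\sum_n\om(2^{-n}\la)=1$ at each $\la>0$. For $p=1$, assertions a), b), c) are, respectively, Theorem~\ref{DY_Thm1.1}~a), Theorem~\ref{DY_Thm1.1}~b) and Theorem~\ref{Hardy_Plancherel}, applied to the extension of $F$ by $F(0):=0$ (which is legitimate since the H\"ormander class ignores the value at $0$ and $L$ is injective, so that $F(L)$ is unchanged); in a) one moreover uses that, as $\supp\om\subset(1/4,1)$, the $\mathcal H^s_q$-condition with $q\ge2$ implies the $\mathcal H^s_2$-condition. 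It thus remains to treat $p\in(1,2)$.

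Fix $p\in(1,2)$ and set $\theta:=2/p-1\in(0,1)$, so that $\theta=2(1/p-1/2)$. The first step is the identification $[L^2(X),H^1_L(X)]_\theta=H^p_L(X)$. Since $L^2(X)=H^2_L(X)$ and complex interpolation satisfies $[A_0,A_1]_\theta=[A_1,A_0]_{1-\theta}$, we have $[L^2(X),H^1_L(X)]_\theta=[H^1_L(X),H^2_L(X)]_{1-\theta}$, and by Fact~\ref{HardyInterpol} the latter equals $H^p_L(X)$ because $1/p=\bigl(1-(1-\theta)\bigr)/1+(1-\theta)/2=1/2+\theta/2$. Consequently Fact~\ref{Interpolation-von-Christoph} transfers any $\mathcal H^{s_0}_{q_0}$ H\"ormander calculus on $H^1_L(X)$ to an $\mathcal H^{s_\theta}_{q_\theta}$ H\"ormander calculus on $H^p_L(X)$ for all $s_\theta>\theta s_0$ and all $q_\theta>q_0/\theta$.

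For a): by Theorem~\ref{DY_Thm1.1}~a), $L$ has an $\mathcal H^{s_0}_2$ calculus on $H^1_L(X)$ for every $s_0>(D+1)/2$. Given $s$ and $q$ as in a), i.e.\ $s>(D+1)(1/p-1/2)=\theta(D+1)/2$ and $1/q<1/p-1/2=\theta/2$, choose $s_0\in\bigl((D+1)/2,\,s/\theta\bigr)$, which is non-empty by the first inequality; then $s>\theta s_0$ and $q>2/\theta$, and Fact~\ref{Interpolation-von-Christoph} yields the desired $\mathcal H^s_q$ calculus on $H^p_L(X)$. Part c) is identical with Theorem~\ref{Hardy_Plancherel} in place of Theorem~\ref{DY_Thm1.1}~a): under the Plancherel condition $L$ has an $\mathcal H^{s_0}_{q_0}$ calculus on $H^1_L(X)$ for every $s_0>\max\{D/2,1/q_0\}$, and for $s>\max\{D,2/q_0\}(1/p-1/2)=\theta\max\{D/2,1/q_0\}$ and $1/q<(2/q_0)(1/p-1/2)=\theta/q_0$ one picks $s_0\in\bigl(\max\{D/2,1/q_0\},\,s/\theta\bigr)$ and applies Fact~\ref{Interpolation-von-Christoph} with $s>\theta s_0$ and $q>q_0/\theta$. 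Part b) is the case $q_0=\infty$: Theorem~\ref{DY_Thm1.1}~b) gives an $\mathcal H^{s_0}_\infty$ calculus on $H^1_L(X)$ for every $s_0>D/2$, and Fact~\ref{Interpolation-von-Christoph} (with $q_\theta=\infty$) transfers it to an $\mathcal H^s_\infty$ calculus on $H^p_L(X)$ for any $s>\theta D/2=D(1/p-1/2)$, upon choosing $s_0\in\bigl(D/2,\,s/\theta\bigr)$.

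No step here is genuinely hard: the real content lies entirely in the $p=1$ endpoints (Theorems~\ref{DY_Thm1.1} and~\ref{Hardy_Plancherel}) and in the abstract interpolation statement Fact~\ref{Interpolation-von-Christoph}, all of which are available. The only point demanding care is the bookkeeping — namely checking that the strict inequalities in the hypotheses leave exactly enough room to pick the base regularity $s_0$ above the relevant threshold with $\theta s_0<s$, that the target integrability exponent satisfies $q>q_0/\theta$, and that the interpolation parameter is precisely $\theta=2(1/p-1/2)$ so that $[L^2(X),H^1_L(X)]_\theta$ is indeed $H^p_L(X)$.
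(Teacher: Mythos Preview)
Your argument for parts a) and c) is correct and matches the paper's proof: both combine the $p=1$ endpoint (Theorem~\ref{DY_Thm1.1}~a) respectively Theorem~\ref{Hardy_Plancherel}) with Fact~\ref{Interpolation-von-Christoph}, taking $\theta=2(1/p-1/2)$, and your bookkeeping of the strict inequalities is accurate.

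There is, however, a gap in your treatment of part b). You invoke Fact~\ref{Interpolation-von-Christoph} starting from $q=\infty$ at the $H^1_L$ endpoint and claim it yields an $\mathcal H^s_\infty$ calculus on $H^p_L(X)$ ``with $q_\theta=\infty$''. But Fact~\ref{Interpolation-von-Christoph} as stated requires the \emph{strict} inequality $q_\theta>q/\theta$; when $q=\infty$ this condition is vacuous in the wrong direction --- no $q_\theta\le\infty$ satisfies it --- so the Fact simply does not apply as written. The paper is aware of this and handles b) by a different route: rather than forcing $q_\theta=\infty$ through Fact~\ref{Interpolation-von-Christoph}, it interpolates directly using the H\"older-space embedding
\[
C^{s+\eps}\hookrightarrow[C^{s_0},C^{s_1}]_\theta
\qquad(\eps>0,\ s_0<s_1,\ s=(1-\theta)s_0+\theta s_1),
\]
which supplies the needed control of the $C^s$-norm of $\om F(2^n\cdot)$ after interpolation without passing through any finite-$q$ H\"ormander class. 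To repair your argument you should either invoke this embedding, or else argue separately that the interpolation statement underlying Fact~\ref{Interpolation-von-Christoph} extends to $q=q_\theta=\infty$ --- something not granted by the Fact as recorded in the paper.
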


\begin{proof}
Let $p\in[1,2]$. The assertion of part a) follows directly by
combining Theorem~\ref{DY_Thm1.1}~a) and Fact
\ref{Interpolation-von-Christoph} with $\theta:=2(1/p-1/2)$. A
similar reasoning using Theorem~\ref{DY_Thm1.1}~b) and the embedding
$C^{s+\eps}\hookrightarrow[C^{s_0},C^{s_1}]_\theta$ (for $\eps>0$,
$s_0<s_1$, $\theta\in(0,1)$ and $s=(1-\theta)s_0+\theta s_1$) gives
part b).

\smallskip
Suppose that $L$ additionally satisfies the Plancherel condition
(\ref{DOS_Plancherel_Hardy}) for some $q_0\in[2,\infty)$. Due to 
Theorem~\ref{Hardy_Plancherel}, $L$ has an $\mathcal H_{q_0}^s$
Hörmander calculus on $H^1_L(X)$ for each $s>\max\{D/2,1/q_0\}$. Now
Fact \ref{Interpolation-von-Christoph} with $\theta:=2(1/p-1/2)$
yields the assertion of c).
\end{proof}

If the operator $L$ actually enjoys generalized Gaussian estimates
GGE$_m(p_0,p_0')$ for some $p_0\in[1,2)$ and $m\geq2$, then 
Theorem~\ref{HpLp} ensures $H^p_L(X)=L^p(X)$ for every $p\in(p_0,2]$.
Therefore, we deduce from Theorem~\ref{mainresHardy} spectral 
multiplier results on $L^p(X)$ as well. The regularity assumptions 
in our statement a) are weaker than those of \cite[Theorem~1.1]{B} 
and \cite[Theorem~5.6]{KriPre} (or \cite[Theorem~4.95]{CK}), 
where $s>(D+1)/2$, $q=2$ and $s>D|1/p-1/2|+1/2$, $q=2$ were required,
respectively.

\begin{theorem}\label{mainres}
Let $L$ be a non-negative, self-adjoint operator on $L^2(X)$ such
that generalized Gaussian estimates GGE$_m(p_0,p_0')$ hold for some
$p_0\in[1,2)$ and $m\geq2$.
\begin{enumerate}
\item[\bf a)]
For fixed $p\in(p_0,p_0')$ suppose that $s>(D+1)|1/p-1/2|$ and
$1/q<|1/p-1/2|$. Then, for every bounded Borel function
$F\colon[0,\infty)\to\C$ with $\sup_{n\in\Z}\|\om
F(2^n\cdot)\|_{H_q^s}<\infty$, the operator $F(L)$ is bounded on
$L^p(X)$. More precisely, there exists a constant $C>0$ such that
$$
\|F(L)\|_{L^p\to L^p}\leq C\Bigl(\sup_{n\in\Z}\|\om
F(2^n\cdot)\|_{H_q^s}+|F(0)|\Bigr)\,.
$$

\item[\bf b)]
Let $p\in(p_0,p_0')$ and $s>D|1/p-1/2|$. Then, for any bounded
Borel function $F\colon[0,\infty)\to\C$ with $\sup_{n\in\Z}\|\om
F(2^n\cdot)\|_{C^s}<\infty$, the operator $F(L)$ is bounded on
$L^p(X)$. More precisely, there exists a constant $C>0$ such that
$$
\|F(L)\|_{L^p\to L^p}\leq C\Bigl(\sup_{n\in\Z}\|\om
F(2^n\cdot)\|_{C^s}+|F(0)|\Bigr)\,.
$$

\item[\bf c)]
In addition, assume that $L$ fulfills the Plancherel condition
(\ref{DOS_Plancherel_Hardy}) for some $q_0\in[2,\infty)$. Fix
$p\in(p_0,p_0')$. Let $s>\max\{D,2/q_0\}\,|1/p-1/2|$ and
$1/q<2/q_0\,|1/p-1/2|$. Then, for every bounded Borel function
$F\colon[0,\infty)\to\C$ with $\sup_{n\in\Z}\|\om
F(2^n\cdot)\|_{H_q^s}<\infty$, the operator $F(L)$ is bounded on
$L^p(X)$. More precisely, there exists a constant $C>0$ such that
$$
\|F(L)\|_{L^p\to L^p}\leq C\Bigl(\sup_{n\in\Z}\|\om
F(2^n\cdot)\|_{H_q^s}+|F(0)|\Bigr)\,.
$$
\end{enumerate}
\end{theorem}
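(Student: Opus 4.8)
The plan is to read this theorem off from the Hardy-space results already established, so that no new analysis is required: the substantial work --- the molecular $H^1_L$-calculus for $\mathrm{DG}_m$ of \emph{arbitrary} order, the weighted off-diagonal estimates replacing the missing finite speed of propagation when $m>2$, the interpolation of the H\"ormander calculus down the scale, and the identification of $H^p_L(X)$ with $L^p(X)$ --- is already contained in Theorems~\ref{HpLp} and~\ref{mainresHardy}. First I would record that $\mathrm{GGE}_m(p_0,p_0')$ forces $\mathrm{DG}_m$, which is Fact~\ref{GGEequiv}~b) with $u=v=2\in[p_0,p_0']$, so that both cited theorems apply. If $L$ fails to be injective, I would peel off the kernel: the spectral projection $P_0:=\cf_{\{0\}}(L)$ is the strong $L^2$-limit of $e^{-tL}$ as $t\to\infty$, hence --- since summing the annular estimates of Fact~\ref{GGEequiv} makes $\{e^{-tL}:t>0\}$ uniformly bounded on $L^p(X)$ for every $p\in[p_0,p_0']$ --- it extends to a bounded projection on each such $L^p(X)$; writing $F(L)=F(0)P_0+F(L)(I-P_0)$ and noting that on $\overline{R(L)}$ the operator $L$ restricts to an injective operator inheriting all off-diagonal bounds, it suffices to prove the theorem for injective $L$ and to absorb the harmless term $F(0)P_0$. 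From now on I assume $L$ injective.

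For $p\in(p_0,2]$ I would argue directly. Given $F$ as in part a), put $G:=F-F(0)$, viewed as a function on $(0,\infty)$; then $G(0)=0$, so $G$ lies in the H\"ormander class $\mathcal H^s_q$ and
\[
\sup_{n\in\Z}\|\om G(2^n\cdot)\|_{H^s_q}\leqC\sup_{n\in\Z}\|\om F(2^n\cdot)\|_{H^s_q}+|F(0)|,
\]
while the spectral theorem gives $G(L)=F(L)-F(0)I$. The hypotheses $s>(D+1)(1/p-1/2)$ and $1/q<1/p-1/2$ (recall $|1/p-1/2|=1/p-1/2$ for $p\leq 2$) are exactly those under which Theorem~\ref{mainresHardy}~a) furnishes an $\mathcal H^s_q$ H\"ormander calculus on $H^p_L(X)$, and Theorem~\ref{HpLp} identifies $H^p_L(X)$ with $L^p(X)$ with equivalent norms; hence, using $\|F(L)\|_{L^p\to L^p}\leq\|G(L)\|_{L^p\to L^p}+|F(0)|$,
\[
\|F(L)\|_{L^p\to L^p}\leqC\|G(L)\|_{H^p_L\to H^p_L}+|F(0)|\leqC\sup_{n\in\Z}\|\om F(2^n\cdot)\|_{H^s_q}+|F(0)|.
\]
Replacing $H^s_q$ by $C^s$ and invoking Theorem~\ref{mainresHardy}~b) yields part b) for $p\in(p_0,2]$, and adjoining the Plancherel hypothesis \eqref{DOS_Plancherel_Hardy} together with Theorem~\ref{mainresHardy}~c) yields part c) for $p\in(p_0,2]$.

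The range $p\in[2,p_0')$ I would obtain by self-adjoint duality; the value $p=2$ is trivial by the spectral theorem. For $p\in(2,p_0')$ one has $p'\in(p_0,2)$ and, since $L$ is self-adjoint, $F(L)^*=\overline F(L)$, so that $F(L)$ extends boundedly to $L^p(X)$ precisely when $\overline F(L)$ extends boundedly to $L^{p'}(X)$, with equal operator norms. Because complex conjugation is an isometry of $H^s_q$ and of $C^s$ and $\om$ is real-valued, $\|\om\overline F(2^n\cdot)\|_{H^s_q}=\|\om F(2^n\cdot)\|_{H^s_q}$ and likewise for $C^s$, $|\overline F(0)|=|F(0)|$, and $|1/p'-1/2|=|1/p-1/2|$; hence the hypotheses imposed on $F$ at the exponent $p$ are exactly the hypotheses imposed on $\overline F$ at the exponent $p'\in(p_0,2)$ already handled above. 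Combining the two ranges proves all three parts, with the asserted dependence $\sup_{n\in\Z}\|\om F(2^n\cdot)\|_{H^s_q}+|F(0)|$ (resp.\ with $C^s$) of the constant.

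The only step I expect to require genuine care is the reduction to injective $L$: one must check that running the square-function and Hardy-space constructions of Sections~\ref{secHardy}--\ref{Chapter6Interpol} for $L$ restricted to $\overline{R(L)}$ reproduces Theorems~\ref{HpLp} and~\ref{mainresHardy}, and that $P_0$ is bounded on the relevant $L^p(X)$ --- both are routine, the latter being the standard deduction of uniform $L^p$-boundedness of the semigroup from the annular estimates in Fact~\ref{GGEequiv}. Apart from this there is no obstacle: the theorem is a bookkeeping assembly of the identification $H^p_L(X)=L^p(X)$, the interpolated H\"ormander calculus on $H^p_L(X)$, self-adjoint duality, and the elementary splitting $F=(F-F(0))+F(0)$; the symmetry of the regularity exponents under $p\leftrightarrow p'$ needed to patch the two ranges together is automatic, every hypothesis being phrased through $|1/p-1/2|$.
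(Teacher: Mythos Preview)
Your proposal is correct and follows essentially the same route as the paper: reduce to injective $L$, invoke Theorem~\ref{mainresHardy} together with the identification $H^p_L(X)=L^p(X)$ from Theorem~\ref{HpLp} for $p\in(p_0,2]$, and then dualize via self-adjointness for $p\in(2,p_0')$. The only presentational difference is in the injectivity reduction: the paper writes $L$ in block form $\bigl(\begin{smallmatrix}L_0&0\\0&0\end{smallmatrix}\bigr)$ on $\overline{R(L)}\oplus N(L)$ and cites \cite[Illustration~4.87]{CK} for the compatibility of this splitting with the interpolation in Fact~\ref{Interpolation-von-Christoph}, whereas you make the projection $P_0=\cf_{\{0\}}(L)$ explicit and argue its $L^p$-boundedness from the uniform semigroup bounds; both encode the same reduction, and your closing caveat about rerunning the Hardy-space machinery for $L_0$ is exactly the point the paper delegates to that citation.
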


\begin{proof}
Let $p\in(p_0,2)$. We shall prove the three assertions
simultaneously. Suppose that $s>(D+1)(1/p-1/2)$ and $1/q<1/p-1/2$
for the proof of a) and $s>D(1/p-1/2)$, $q=\infty$ for b). For the 
proof of part c) suppose that $L$ fulfills the Plancherel condition
(\ref{DOS_Plancherel_Hardy}) for some $q_0\in[2,\infty)$ as well as
$s>\max\{D,2/q_0\}\,(1/p-1/2)$ and $1/q<2/q_0\,(1/p-1/2)$.

Since injectivity of $L$ is not assumed, Theorem \ref{mainresHardy}
cannot be applied directly. In order to overcome this difficulty, we
use the concept of \cite[Proposition 15.2]{Levico} (see also
\cite[Theorem 3.8]{CDMY}) that provides a decomposition of the space
$L^2(X)$ as the orthogonal sum of the closure of the range
$\ov{R(L)}$ of $L$ and the null space $N(L)$ of $L$. The operator
$L$ then takes the form
$$
L=\begin{pmatrix}L_0&0\\0&0\end{pmatrix}
$$
with respect to the decomposition $L^2(X)=\ov{R(L)}\oplus N(L)$,
where $L_0$ is the part of $L$ in $\ov{R(L)}$, i.e.\ the restriction
of $L$ to $\dom(L_0):=\{x\in\ov{R(L)}\cap\dom(L):\,
Lx\in\ov{R(L)}\}$. But $L_0$ is injective on its domain, so that
Theorem \ref{mainresHardy} applies to $L_0$. This approach was
already made in \cite[Section 4.6.1]{CK} and, as remarked in
\cite[Illustration 4.87]{CK}, the decomposition and the
interpolation result cited in Fact \ref{Interpolation-von-Christoph}
can be combined. Hence, $L_0$ has an $\mathcal H_q^s$ Hörmander
calculus on $H_{L_0}^p(X)$. Consider a bounded Borel function
$F\colon[0,\infty)\to\C$ with $\sup_{n\in\Z}\|\om
F(2^n\cdot)\|_{H_q^s}<\infty$. Then it holds
$$
F(L)=\begin{pmatrix}(F|_{(0,\infty)})(L_0)&0\\0&F(0)\,
I_{N(L)}\end{pmatrix}
$$
on $H_{L_0}^p(X)\cap L^2(X)$. Because of $F|_{(0,\infty)}\in\mathcal
H_q^s$, one has moreover
\begin{align*}
\|(F|_{(0,\infty)})(L_0)\|_{H_{L_0}^p(X)\to
H_{L_0}^p(X)}&\leqC\sup_{n\in\Z}\|\om F(2^n\cdot)\|_{H_q^s}
\intertext{as well as} \|F(0)\,I_{N(L)}\|_{H_{L_0}^p(X)\to
H_{L_0}^p(X)}&\leq|F(0)|\,.
\end{align*}
Since, by Theorem~\ref{HpLp}, the spaces $H^p_{L_0}(X)$ and $L^p(X)$
coincide, the statements a), b) and c) are proven for any
$p\in(p_0,2)$.

Let $p\in(2,p_0')$. Due to the self-adjointness of $L$ on $L^2(X)$,
boundedness of spectral multipliers on $L^p(X)$ follows by the case
proved above and dualization. The claim for $p=2$ is trivial.
\end{proof}

\begin{remark}
(1) The assertions of Theorem \ref{mainres} remain even valid for 
open subsets $\Om$ of $X$ provided that the ball appearing on the 
right-hand side of (\ref{GGE}) is the one in $X$. The reasoning is 
standard and relies on an observation quoted in 
\cite[pp.~934-935]{BK2} by adapting the arguments given in 
\cite[p.~245]{DM99} (see also \cite[p.~452]{B}). For this purpose,
one has only to extend an operator $T\colon L^p(\Om)\to L^q(\Om)$ 
by zero to the operator $\wt T\colon L^p(X)\to L^q(X)$ defined via
$$
\wt Tu(x):=\left\{
\begin{array}{cl}
T(\cf_\Om u)(x)&~\mbox{for }x\in\Om\\
0&~\mbox{for }x\in X\setminus\Om
\end{array}
\right.\qquad(u\in L^p(X),\,\mu\mbox{-a.e.}~x\in X)
$$
and observe that $\|\wt T\|_{L^p(X)\to L^q(X)}=\|T\|_{L^p(\Om)\to
L^q(\Om)}$. The modified result allows to cover elliptic operators
on irregular domains $\Om\subset\R^D$ as well (cf.\ e.g.\
\cite[Section 2.1]{B}).

(2) Of course, it is possible to apply the same method (complex
interpolation with the functional calculus in $L^2(X)$ and coincidence
of $H_L^p(X)$ and $L^p(X)$) also with Theorem~\ref{DY_Thm1.1_Plancherel}
as a starting point. We do not go into details here.
\end{remark}

\section{Proofs of some auxiliary results}
\label{proofs}

In this section, we proof the Lemmata \ref{GGEkomplexbelRadius},
\ref{DLProp2.3}, \ref{LemDY(3.1)VarianteLem} and \ref{intVerReg}.

\begin{spezbew}{Lemma \ref{GGEkomplexbelRadius}}
{\bf a)}\hskip\labelsep In view of Fact \ref{GGEkomplex}, there are
constants $b,C>0$ such that
\begin{align*}
&\bigl\|\cf_{B(x,r_z)}e^{-zL}\cf_{B(y,r_z)}\bigr\|_{L^p\to L^q}
\leq
C\,|B(x,r_z)|^{-(\frac1{p}-\frac1{q})}
\biggl(\frac{|z|}{\Re z}\biggr)^{D(\frac1{p}-\frac1{q})}
\exp\Biggl(-b\biggl(\frac{d(x,y)}{r_z}\biggr)^{\frac{m}{m-1}}\Biggr)
\end{align*}
for all $x,y\in X$ and $z\in\C$ with $\Re z>0$. By Fact
\ref{GGEequiv} (with $T:=({|z|}/{\Re z})^{-D(1/{p}-1/{q})}e^{-zL}$),
one finds $b',C'>0$ such that
\begin{align}\label{ergakfsrfs}
\bigl\|\cf_{B_1}v_{r_z}^{\frac1{p}-\frac1{q}}T\cf_{B_2}\bigr\|_{L^p\to L^q}
\leq
C'\exp\Biggl(-b'\biggl(\frac{\dist(B_1,B_2)}{r_z}\biggr)^{\frac{m}{m-1}}\Biggr)
\end{align}
for all balls $B_1,B_2\subset X$ and all $z\in\C$ with $\Re z>0$, 
where $v_{r_z}:=|B(\cdot,r_z)|$.
Let $r>0$ be fixed. The doubling property leads to
\begin{align*}
v_{r}(x)
\leqC
\biggl(1+\frac{r}{r_z}\biggr)^Dv_{r_z}(x)
\end{align*}
for every $x\in X$ and $z\in\C$ with $\Re z>0$. Now choose arbitrary
$x,y\in X$ with $d(x,y)\geq4r$ and consider the balls $B_1:=B(x,r)$
and $B_2:=B(y,r)$. Then it holds
$$
\dist(B_1,B_2)
=
d(x,y)-2r
\geq
\frac12\,d(x,y)\,.
$$
By inserting $B_1$, $B_2$ into (\ref{ergakfsrfs}) and collecting the
estimates above together, one arrives at
\begin{align*}
&\bigl\|\cf_{B(x,r)}v_{r}^{\frac1{p}-\frac1q}e^{-zL}\cf_{B(y,r)}\bigr\|_{L^p\to L^q}
\leqC
\biggl(1+\frac{r}{r_z}\biggr)^{D(\frac1{p}-\frac1{q})}
\biggl(\frac{|z|}{\Re z}\biggr)^{D(\frac1{p}-\frac1{q})}
\exp\Biggl(-b'\biggl(\frac{d(x,y)}{2r_z}\biggr)^{\frac m{m-1}}\Biggr)\,.
\end{align*}
Since $v_{r}(x)\cong v_{r}(z)$ for all $z\in B(x,r)$ (cf.\ Fact
\ref{BKKugel}), one obtains the desired estimate
\begin{align*}
&\bigl\|\cf_{B(x,r)}e^{-zL}\cf_{B(y,r)}\bigr\|_{L^p\to L^q}
\\&\quad\leq
C'\,|B(x,r)|^{-(\frac1{p}-\frac1{q})}
\biggl(1+\frac{r}{r_z}\biggr)^{D(\frac1{p}-\frac1{q})}
\biggl(\frac{|z|}{\Re z}\biggr)^{D(\frac1{p}-\frac1{q})}
\exp\Biggl(-b'\biggl(\frac{d(x,y)}{2r_z}\biggr)^{\frac m{m-1}}\Biggr)
\end{align*}
for all $r>0$, $z\in\C$ with $\Re z>0$, and $x,y\in X$ with
$d(x,y)\geq4r$. By the cost of changing the constants $b',C'$, one
is able to remove this restriction on $d(x,y)$.

\smallskip
\noindent{\bf b)}\hskip\labelsep Our approach mimics that of
\cite[(i)$\Rightarrow$(3), p.\ 359]{BKLeg}. Observe that it suffices
to prove the statement only for every $k\in\N\setminus\{1\}$. With
the help of \cite[Lemma 3.4]{BKLeg}, we can write for each
$k\in\N\setminus\{1\}$, $r>0$, $x\in X$, and $z\in\C$ with $\Re z>0$
\begin{align*}
&\bigl\|\cf_{B(x,r)}e^{-zL}\cf_{A(x,r,k)}\bigr\|_{L^p\to L^q}
\\&\quad\leqC
\int_X
\bigl\|\cf_{B(x,r)}e^{-zL}\cf_{B(y,r)}\bigr\|_{L^p\to L^q}
\bigl\|\cf_{B(y,r)}\cf_{A(x,r,k)}\bigr\|_{L^q\to L^q}v_r(y)^{-1}\,d\mu(y)
\\&\quad=
\int_{B(x,(k+2)r)\setminus B(x,(k-1)r)}
\bigl\|\cf_{B(x,r)}e^{-zL}\cf_{B(y,r)}\bigr\|_{L^p\to L^q}v_r(y)^{-1}\,d\mu(y)\,.
\intertext{By exploiting the bound from part a), we continue our estimation}
&\quad\leqC
|B(x,r)|^{-(\frac1{p}-\frac1{q})}\biggl(1+\frac{r}{r_z}\biggr)^{D(\frac1{p}
-\frac1{q})}
\biggl(\frac{|z|}{\Re z}\biggr)^{D(\frac1{p}-\frac1{q})}
~\times\\&\qquad\quad\times~
\int_{B(x,(k+2)r)\setminus B(x,(k-1)r)}
\exp\Biggl(-b'\biggl(\frac{d(x,y)}{r_z}\biggr)^{\frac{m}{m-1}}\Biggr)v_r(y)^{-1}
\,d\mu(y)\,.
\intertext{Using $d(x,y)\geq(k-1)r\geq kr/2$ as well as
$v_r(y)^{-1}\leqC(k+2)^Dv_{(k+2)r}(y)^{-1}$ leads to}
&\quad\leqC
|B(x,r)|^{-(\frac1{p}-\frac1{q})}\biggl(1+\frac{r}{r_z}\biggr)^{D(\frac1{p}
-\frac1{q})}
\biggl(\frac{|z|}{\Re z}\biggr)^{D(\frac1{p}-\frac1{q})}
~\times\\&\qquad\quad\times~
\int_{B(x,(k+2)r)\setminus B(x,(k-1)r)}
\exp\Biggl(-2^{-\frac{m}{m-1}}b'\biggl(\frac{kr}{r_z}\biggr)^{\frac{m}{m-1}}
\Biggr)(k+2)^Dv_{(k+2)r}(y)^{-1}\,d\mu(y)
\\&\quad\leqC
|B(x,r)|^{-(\frac1{p}-\frac1{q})}\biggl(1+\frac{r}{r_z}\biggr)^{D(\frac1{p}
-\frac1{q})}
\biggl(\frac{|z|}{\Re z}\biggr)^{D(\frac1{p}-\frac1{q})}
~\times\\&\qquad\quad\times~
(k+2)^D\exp\Biggl(-2^{-\frac{m}{m-1}}b'\biggl(\frac{kr}{r_z}\biggr)^{\frac{m}{
m-1}}\Biggr)\,,
\end{align*}
where the last inequality is thanks to (\ref{doubl}). This proves
the statement.
\end{spezbew}

\begin{spezbew}{Lemma \ref{DLProp2.3}}
Let $K\in\N$ and $t>0$ be arbitrary. The Cauchy formula gives the
representation
$$
(tL)^{K}e^{-tL}
=
t^{K}\,\frac{(-1)^{K}K!}{2\pi i}
\int_{|z-t|=\eta t}e^{-zL}\,\frac{dz}{(z-t)^{K+1}}\,,
$$
where $\eta:=1/2\sin(\theta/2)$ for some $\theta\in(0,\pi/2)$. Note
that the choice of $\eta$ ensures that the ball
$\{z\in\C:|z-t|\leq\eta t\}$ is contained in the sector
$\Si_\theta:=\{z\in\C\setminus\{0\}:|\arg z|<\theta\}$. According to
Lemma \ref{GGEkomplexbelRadius}, it holds for every $x,y\in X$:
\begin{align*}
&\bigl\|\cf_{B(x,t^{1/m})}(tL)^{K}e^{-tL}\cf_{B(y,t^{1/m})}\bigr\|_{L^2\to L^2}
\\&\quad\leq
t^{K}\,\frac{K!}{2\pi}
\int_{|z-t|=\eta t}
\bigl\|\cf_{B(x,t^{1/m})}e^{-zL}\cf_{B(y,t^{1/m})}\bigr\|_{L^2\to L^2}
\,\frac{|dz|}{|z-t|^{K+1}}
\\&\quad\leqC
t^{K}\,\frac{K!}{2\pi} \int_{|z-t|=\eta t}
\exp\Biggl(-b'\biggl(\frac{d(x,y)}{r_z}\biggr)^{\frac m{m-1}}\Biggr)
\,\frac{|dz|}{(\eta t)^{K+1}}\,,
\intertext {where $r_z:=(\Re z)^{1/m}|z|/\Re z$. Due to $\Re
z\in[(1-\eta)t,(1+\eta)t]$ and $1\leq |z|/\Re z\leq1/\cos\theta$
for all $z$ belonging to the integration path, we have $r_z\cong
t^{1/m}$ with implicit constants depending only on $\theta$ or $m$.
Thus, we can finish our estimation as follows} &\quad\leqC
t^{K}\,\frac{K!}{2\pi}\,2\pi\eta t\,
\exp\Biggl(-b'\biggl(\frac{d(x,y)}{t^{1/m}}\biggr)^{\frac
m{m-1}}\Biggr) \frac{1}{(\eta t)^{K+1}}
\\&\quad=
\frac{K!}{\eta^K}\,
\exp\Biggl(-b'\biggl(\frac{d(x,y)}{t^{1/m}}\biggr)^{\frac m{m-1}}\Biggr)\,.
\end{align*}
\end{spezbew}

\begin{spezbew}{Lemma \ref{LemDY(3.1)VarianteLem}}
It suffices to check (\ref{DY(3.1)Variante}) only for each
$i,j\in\N\setminus\{1\}$ with $|j-i|>3$ since otherwise
(\ref{DY(3.1)Variante}) is valid by the spectral theorem after
choosing appropriate constants. Due to the self-adjointness of $L$,
one can swap $i$ and $j$ in the term on the left-hand side of
(\ref{DY(3.1)Variante}). Hence, it will be enough to show the
assertion for every $i,j\in\N\setminus\{1\}$ with $j-i>3$. By
applying \cite[Lemma 3.4]{BKLeg}, (\ref{DY(3.1)}), and the doubling
property, we get for each $r>0$ and each $x\in X$:
\begin{align*}
&\bigl\|\cf_{U_j(B(x,r))}F(L)(I-e^{-r^mL})^M\cf_{U_i(B(x,r))}\bigr\|_{L^2\to L^2}
\\&\quad\leqC
\int_X\bigl\|\cf_{U_j(B(x,r))}F(L)(I-e^{-r^mL})^M\cf_{B(z,r)}\bigr\|_{L^2\to L^2}
\bigl\|\cf_{B(z,r)}\cf_{U_i(B(x,r))}\bigr\|_{L^2\to L^2}\,\frac{d\mu(z)}{|B(z,r)|}
\\&\quad\leq
\int_{B(x,2^{i+1}r)\setminus B(x,2^{i-2}r)}
\sum_{\nu=j-i-3}^{j+i+1}\bigl\|\cf_{U_\nu(B(z,r))}F(L)(I-e^{-r^mL})^M\cf_{B(z,r)
}\bigr\|_{L^2\to L^2}
\,\frac{d\mu(z)}{|B(z,r)|}
\\&\quad\leqC
\int_{B(x,2^{i+1}r)}
\sum_{\nu=j-i-3}^{j+i+1}C_F\,2^{-\nu\del}\,2^{(i+1)D}\,
\frac{d\mu(z)}{|B(z,2^{i+1}r)|}\,.
\end{align*}
In the second step we covered $U_j(B(x,r))$ by dyadic annuli around
the point $z$. Here, we used, among other things, the elementary 
inequalities
\begin{align}\label{rgfrejfhlreb}
|2^{\al}-2^{\beta}|\geq2^{|\al-\beta|-1}
\qquad\mbox{and}\qquad
2^\al+2^\beta\leq2^{\al+\beta+1}
\end{align}
which are valid for each $\al,\beta\in\N_0$ with $\al\ne\beta$.
With the help of
\begin{align*}
\sum_{\nu=j-i-3}^{j+i+1}2^{-\nu\del}
=
2^{3\del}\,2^{-(j-i)\del}\,\sum_{\eta=0}^{2i+4}2^{-\eta\del}
\leqC
2^{-(j-i)\del}
\end{align*}
and Fact~\ref{BKKugel}, we finish our estimation as follows
\begin{align*}
&\bigl\|\cf_{U_j(B(x,r))}F(L)(I-e^{-r^mL})^M\cf_{U_i(B(x,r))}\bigr\|_{L^2\to L^2}
\\&\quad\leqC
C_F2\,^{-(j-i)\del}
\int_{B(x,2^{i+1}r)}2^{(i+1)D}\frac{d\mu(z)}{|B(z,2^{i+1}r)|}
\\&\quad\leqC
C_F\,2^{iD}\,2^{-(j-i)\del}\,.
\end{align*}
\end{spezbew}

\begin{spezbew}{Lemma \ref{intVerReg}}
Let $K,M\in\N$, $r>0$, and $x\in X$. At the beginning, we note that
the operator $P_{m,M,r}(L)$ is bounded on $L^2(X)$:
\begin{align*}
\bigl\|P_{m,M,r}(L)\bigr\|_{L^2\to L^2}
&\leq
r^{-m}\int_r^{\sqrt[m]2r}s^{m-1}\bigl\|I-e^{-s^mL}\bigr\|^M_{L^2\to L^2}\,ds
\\&\leq
r^{-m}\int_r^{\sqrt[m]2r}s^{m-1}2^M\,ds
=
\frac{2^M}m\,.
\end{align*}

With analogous arguments as in the proof of Lemma
\ref{LemDY(3.1)VarianteLem}, it is enough to verify
(\ref{intVerRegDG}) for each $i,j\in\N_0$ with $j-i>6$. To this
purpose, fix $k\in\{1,\ldots,M\}$ and $s\in[r,\sqrt[m]2r]$ for a
moment. We shall establish the estimate
\begin{align}\label{lrfrqrf}
\bigl\|\cf_{U_j(B(x,r))}e^{-ks^mL}\cf_{U_i(B(x,r))}\bigr\|_{L^2\to L^2}
\leq C
\exp\bigl(-b(2^{j-1}-2^{i+2})\bigr)
\end{align}
for some constants $b,C>0$ depending only on $m$, $M$ and the
constants in the Davies-Gaffney or doubling condition, but not on
the other parameters.

\smallskip
From the Davies-Gaffney estimates DG$_m$ we obtain for each $y\in X$:
\begin{align*}
&\bigl\|\cf_{B(x,r)}e^{-ks^mL}\cf_{B(y,r)}\bigr\|_{L^2\to L^2}
\leq
\bigl\|\cf_{B(x,k^{1/m}s)}e^{-ks^mL}\cf_{B(y,k^{1/m}s)}\bigr\|_{L^2\to L^2}
\\[1ex]&\qquad\leqC
\exp\Biggl(-b\biggl(\frac{d(x,y)}{k^{1/m}s}\biggr)^{\frac m{m-1}}\Biggr)
\leq
\exp\Biggl(-b(2M)^{-\frac1{m-1}}
\biggl(\frac{d(x,y)}{r}\biggr)^{\frac m{m-1}}\Biggr)\,.
\end{align*}
Therefore, Fact \ref{GGEequiv} yields for any $\nu\in\N$:
\begin{align*}
\bigl\|\cf_{A(x,r,\nu)}e^{-ks^mL}\cf_{B(x,r)}\bigr\|_{L^2\to L^2}
\leqC
\exp\bigl(-b\nu^{\frac m{m-1}}\bigr)
\leq
e^{-b\nu}\,.
\end{align*}
By applying \cite[Lemma 3.4]{BKLeg} and the doubling property, we
deduce
\begin{align*}
&\bigl\|\cf_{U_j(B(x,r))}e^{-ks^mL}\cf_{U_i(B(x,r))}\bigr\|_{L^2\to L^2}
\\&\quad\leqC
\int_X\bigl\|\cf_{U_j(B(x,r))}e^{-ks^mL}\cf_{B(z,r)}\bigr\|_{L^2\to L^2}
\bigl\|\cf_{B(z,r)}\cf_{U_i(B(x,r))}\bigr\|_{L^2\to L^2}\,\frac{d\mu(z)}{|B(z,r)|}
\\&\quad\leq
\int_{B(x,2^{i+1}r)\setminus B(x,2^{i-2}r)}
\sum_{\nu=2^{j-1}-2^{i+1}}^{2^j+2^{i+1}}
\bigl\|\cf_{A(z,r,\nu)}e^{-ks^mL}\cf_{B(z,r)}\bigr\|_{L^2\to L^2}\,
\frac{d\mu(z)}{|B(z,r)|}
\\&\quad\leqC
\int_{B(x,2^{i+1}r)}\sum_{\nu=2^{j-1}-2^{i+1}}^{2^j+2^{i+1}}e^{-b\nu}\,
2^{(i+1)D}\,\frac{d\mu(z)}{|B(z,2^{i+1}r)|}\,.
\end{align*}
With the help of
\begin{align*}
\sum_{\nu=2^{j-1}-2^{i+1}}^{2^j+2^{i+1}}e^{-b\nu}
&\leq
\exp\bigl(-b(2^{j-1}-2^{i+1})\bigr)\sum_{\eta=0}^{\infty}e^{-b\eta}
=
\frac{1}{1-e^{-b}}\,\exp\bigl(-b(2^{j-1}-2^{i+1})\bigr)
\end{align*}
and Fact \ref{BKKugel}, we finally arrive at the claimed estimate
(\ref{lrfrqrf})
\begin{align*}
\bigl\|\cf_{U_j(B(x,r))}e^{-ks^mL}\cf_{U_i(B(x,r))}\bigr\|_{L^2\to L^2}
\leqC
2^{iD}
\exp\bigl(-b(2^{j-1}-2^{i+1})\bigr)
\leqC
\exp\bigl(-b(2^{j-1}-2^{i+2})\bigr)\,.
\end{align*}

\smallskip
In view of the formula
$$
(I-e^{-s^mL})^M=\sum_{k=0}^M\binom{M}{k}(-1)^ke^{-ks^mL}
$$
and the disjointness of $U_i(B(x,r))$ and $U_j(B(x,r))$, we get from
(\ref{lrfrqrf})
\begin{align}\label{rgkldghdhdhgdhhf}
&\bigl\|\cf_{U_j(B(x,r))}P_{m,M,r}(L)\,\cf_{U_i(B(x,r))}\bigr\|_{L^2\to L^2}\nonumber
\\&\quad\leq\nonumber
\sum_{k=0}^M\binom{M}{k}
r^{-m}\int_r^{\sqrt[m]2r}s^{m-1}\bigl\|\cf_{U_j(B(x,r))}
 e^{-ks^mL}\cf_{U_i(B(x,r))}\bigr\|_{L^2\to L^2}\,ds
\\&\quad\leqC\nonumber
\sum_{k=1}^M\binom{M}{k}
r^{-m}\int_r^{\sqrt[m]2r}s^{m-1}\,ds\,
\exp\bigl(-b(2^{j-1}-2^{i+2})\bigr)
\\&\quad\leqC
\exp\bigl(-b(2^{j-1}-2^{i+2})\bigr)\,.
\end{align}
Due to the inequality (\ref{rgfrejfhlreb}), the assertion
(\ref{intVerRegDG}) for $K=1$ is verified.

\smallskip
The general statement follows by induction, once (\ref{intVerRegDG})
is checked for $K=2$. This will be achieved by adapting the proof of
\cite[Lemma 2.3]{HM03} to the present situation. For the rest of the
proof we abbreviate $P:=P_{m,M,r}(L)$. Let $f\in L^2(X)$ with $\supp
f\subset U_i(B)$ and $\|f\|_{L^2}=1$ be fixed. We consider the set
\begin{align*}
G&:=
\bigl\{y\in
X\,:\,\mbox{dist}(y,U_j(B))<\tfrac12\,\mbox{dist}(U_i(B),U_j(B))\bigr\}
\\&\phantom{:}=
\bigl\{y\in X\,:\,(2^{j-2}+2^{i-1})r<d(x,y)<(5\cdot2^{j-2}-2^{i-1})r\bigr\}
\end{align*}
and analyze
\begin{align*}
\bigl\|\cf_{U_j(B)}P^2f\bigr\|_{L^2}
\leq
\bigl\|P(\cf_G\cdot Pf)\bigr\|_{L^2(U_j(B))}
+
\bigl\|P(\cf_{X\setminus G}\cdot Pf)\bigr\|_{L^2(U_j(B))}\,.
\end{align*}
In order to estimate the first term on the right-hand side, we 
initially exploit the boundedness of $P$ on $L^2(X)$ and then cover
the set $G$ by dyadic annuli in such a way as to enable us to apply
(\ref{rgkldghdhdhgdhhf}):
\begin{align*}
&\bigl\|P(\cf_G\cdot Pf)\bigr\|_{L^2(U_j(B))}
\leqC
\|\cf_G\cdot Pf\|_{L^2}
\leq
\sum_{k=\lfloor\log_2(2^{j-2}+2^{i-1})\rfloor}
^{\lfloor\log_2(5\cdot2^{j-2}-2^{i-1})\rfloor+1}\|\cf_{U_k(B)}\cdot Pf\|_{L^2}
\\&\qquad\leqC
\sum_{k=\lfloor\log_2(2^{j-2}+2^{i-1})\rfloor}
^{\lfloor\log_2(5\cdot2^{j-2}-2^{i-1})\rfloor+1}e^{-b(2^{k-1}-2^{i+2})}\|f\|_{L^2}
\\&\qquad\leq
\bigl((\log_2(5\cdot2^{j-2}-2^{i-1})+3-\log_2(2^{j-2}+2^{i-1})\bigr)
e^{-b((2^{j-2}+2^{i-1})/4-2^{i+2})}
\\&\qquad\leqC
e^{-b(2^{j-4}-2^{i+2})}\,.
\end{align*}
Thanks to (\ref{rgfrejfhlreb}), the latter is bounded by a constant
times $\exp(-b\,2^{j-i})$, as desired.

\smallskip
The second summand $\|P(\cf_{X\setminus G}\cdot Pf)\|_{L^2(U_j(B))}$
can be treated in an analogous manner. One has only to interchange
the sequence of the arguments. At first, one covers $X\setminus G$
by dyadic annuli, so that the off-diagonal estimate
(\ref{rgkldghdhdhgdhhf}) is applicable, and then one utilizes the
boundedness of $P$ on $L^2(X)$ as well as (\ref{rgfrejfhlreb}). This
gives a similar estimate as before and finishes the proof.
\end{spezbew}

\end{document}